
\documentclass[a4paper,11pt]{amsart}
\usepackage{amssymb,graphicx,float}
\usepackage{color,xcolor}
%
%
%
 \newtheorem{thm}{Theorem}[section]
 \newtheorem{cor}[thm]{Corollary}
 \newtheorem{lem}[thm]{Lemma}
 \newtheorem{prop}[thm]{Proposition}
 \theoremstyle{definition}
 \newtheorem{defn}[thm]{Definition}
 \theoremstyle{remark}
 \newtheorem{rem}[thm]{Remark}
 \newtheorem{ex}[thm]{Example}
 \numberwithin{equation}{section}

\newcommand{\CC}{\mathbb{C}}
\newcommand{\TT}{\mathbb{T}}
\newcommand{\FF}{\mathbb{F}}
\newcommand{\RR}{\mathbb{R}}
\newcommand{\NN}{\mathbb{N}}

\def\cX{\mathcal{X}}
\def\cV{\mathcal{V}}
\def\cY{\mathcal{Y}}
\def\cZ{\mathcal{Z}}
\def\PP{\mathbb{P}}
\def\Exp{\mathop{\mathrm{Exp}}\nolimits}
\def\Arg{\mathop{\mathrm{Arg}}\nolimits}
\def\Span{\mathop{\mathrm{Span}}}
\def\norm{{\|\cdot\|}}
\newcommand{\Ker}{\mathop{\mathrm{Ker}}\nolimits}

\newcommand{\simbj}{\mathbin{\sim_{BJ}}}
\newcommand\pperp{\protect\mathpalette{\protect\independenT}{\perp}}
    \def\independenT#1#2{\mathrel{\rlap{$#1#2$}\mkern2mu{#1#2}}}

\allowdisplaybreaks
\begin{document}

%
%
%
%
%
%
%
%
%

\title[Birkhoff--James classification of  norm's properties]
 {Birkhoff--James classification of  norm's\\ properties}

\author[A. Guterman]{Alexander~Guterman$^{1}$}
\address{Department of Mathematics, Bar-Ilan University, Ramat-Gan, 5290002, Israel.}
\email{alexander.guterman@biu.ac.il}
\thanks{$^{2,3}$This work is supported in part by the Slovenian Research Agency (research program P1-0285 and research projects N1-0210, N1-0296 and J1-50000). \\ $^{4}$The work was financially supported by RSF grant 22-11-00052.}

\author[B. Kuzma]{Bojan~Kuzma$^{2}$}
\address{$^{a}$University of Primorska, Glagolja\v{s}ka 8, SI-6000 Koper, Slovenia, and
$^{b}$Institute of Mathematics, Physics, and Mechanics, Jadranska 19, SI-1000 Ljubljana, Slovenia.}
 \email{bojan.kuzma@upr.si}

\author[S. Singla]{Sushil~Singla$^{3}$}
\address{University of Primorska, Glagolja\v{s}ka 8, SI-6000 Koper, Slovenia.}
 \email{ss774@snu.edu.in}

\author[S. Zhilina]{Svetlana~Zhilina$^{4}$}
\address{$^{a}$Department of Mathematics and Mechanics, Lomonosov Moscow State University, Moscow, 119991, Russia, and
$^{b}$Moscow Center for Fundamental and Applied Mathematics, Moscow, 119991, Russia.}
\email{s.a.zhilina@gmail.com}


\dedicatory{Dedicated to Professor Chi-Kwong Li, a mentor and a friend,\\ on occasion of his  65's birthday}
\keywords{Normed space; Birkhoff-James orthogonality; Orthodigraph; Dimension; Smoothness; Rotundness; Radon Plane.}
\subjclass{46B20;  05C63}
\date{}

\begin{abstract}
For an arbitrary normed space $\cX$ over a field $\FF \in \{ \RR, \CC \}$, we define the directed graph $\Gamma(\cX)$ induced by Birkhoff–James orthogonality on the projective space $\mathbb P(\cX)$, and also its nonprojective counterpart $\Gamma_0(\cX)$. We show that, in finite-dimensional normed spaces, $\Gamma(\cX)$ carries all the information about the dimension, smooth points, and norm’s maximal faces. It also allows to determine whether the norm is a supremum norm or not, and thus classifies finite-dimensional abelian $C^\ast$-algebras among other normed spaces. We further establish the necessary and sufficient conditions under which the graph $\Gamma_0(\mathcal{R})$ of a (real or complex) Radon plane~$\mathcal{R}$ is isomorphic to the graph $\Gamma_0(\FF^2, \norm_2)$ of the two-dimensional Hilbert space and construct examples of such nonsmooth Radon planes.
\end{abstract}

\maketitle
\section{Introduction and Preliminaries}


How important is a given relation?
The question is intentionally vague and its answer depends very much on the context, i.e, on the choice of objects of study. One can measure the importance of the relation by using graphs: The vertices, typically (though not always), are all the elements of a given object and two vertices are connected by a directed edge if they are related.  This approach concentrates on the relation alone and leaves aside how the relation interacts with other structures/operations inside the object. Having built the  graphs on two objects within the same category (like the category {\bf Mat} of  matrix algebras),  one can then examine how close the two objects are if the relation on them behaves in exactly the same way. Technically,  if the two graphs are isomorphic, does it imply that the two objects are isomorphic? We call this {\it an isomorphism problem} and if the answer is yes on objects within some category, then the relation is clearly very important there. For example, it was shown in~\cite{Kuzma-JOT} that commutativity has exactly this property in the category ${\bf Mat}(\CC)$  of complex matrix algebras. The same relation  alone can tell if a given nonabelian finite group is simple and can also distinguish among them \cite{Abdollahi,Han-Chen-Guo,Solomon-Woldar}, although it is less powerful on general groups~\cite{Came, Giu-Kuzm}.

Another measure of the importance of the relation is whether it  can classify elements with a particular property inside an object. We  call this \emph{property recognition problem}.
Presently we investigate these questions in the category ${\bf Ban}$ of finite-dimensional Banach spaces. In this paper we focus our attention on the relation of Birkhoff--James orthogonality.

Given a  normed vector space $\cX$ over the field $\FF\in\{\RR,\CC\}$, there are several nonequivalent possibilities to equip it with orthogonality relation. One of the most known and investigated among them is Birkhoff--James orthogonality (BJ-ortho\-go\-na\-lity for short), denoted in the present paper by $x\perp y$. In the words of Birkhoff~\cite{Birkhoff} (choosing the origin to be a point which he denoted by $p$, since Birkhoff considered vectors of the form $\overrightarrow{pq}$), it is defined as a condition that no point on the (extended) line containing $0$ and $y$ is nearer  to~$x$ than $0$.  Equivalently, $x\perp y$ if
$$\|x+\lambda y\|\ge \|x\|;\qquad \lambda \in \FF.$$

Birkhoff then proved in~\cite{Birkhoff} that if any given line has at most one perpendicular to any given point outside of this line, then the above relation on a three-dimensional (real) space is symmetric, i.e.,
$$x\perp y \Longleftrightarrow y\perp x,$$
if and only if the norm is induced by an inner product. The first condition here is interpreted as follows: let $\ell$ be a line passing through some distinct points~$x$ and~$y$, and let~$z$ be an arbitrary point which does not belong to~$\ell$. Then there exists at most one point $w \in \ell$ such that $\overrightarrow{wz} = z - w \perp y - x = \overrightarrow{xy}$ (this is equivalent to the fact that the norm on~$\cX$ is rotund, i.e., strictly convex).

This orthogonality relation was further developed (again in real spaces)  by James in a series of papers~\cite{James,James0,James1}; in particular in \cite[Theorem~1]{James0} he managed to remove the rotundness condition from Birkhoff's result  by a nice application of Kakutani's~\cite[Theorem 3]{Kakutani} classification of inner-product spaces using the norm of a projection. Note that Kakutani's  proof works only in real spaces, however it can be extended without altering its statement to complex normed spaces as well (see~\cite{bohnenblust}). Thus, by \cite[Theorem~1]{James0}, the symmetricity of  BJ-orthogonality is equivalent to the fact that the norm is induced by an inner product, for any  normed space of dimension at least three and over any field $\FF=\RR,\CC$.
The interesting part of this result is that it classifies inner-product spaces purely in terms  of the BJ-orthogonality relation without using additional operations available in every vector space (additivity and multiplicativity by scalars).

A natural tool to investigate this further (i.e., the information on a normed space encoded by the properties of BJ-orthogonality) is via a graph of the relation. More precisely, to every normed space $\cX$ we associate a directed graph (i.e, a digraph) $\Gamma_0=\Gamma_0(\cX)$ whose vertices are all the elements of $\cX$ and vertices $x,y$ form a directed edge $x\rightarrow y$ if $x\perp y$. Observe that $x\perp x$ if and only if $x=0$. Therefore, $x=0$ is the only vertex with a loop in the graph  $\Gamma_0$.

Since Birkhoff--James relation is clearly homogeneous  in a sense that $x\perp y$ if and only if $(\lambda x)\perp (\mu y)$ for every $\lambda,\mu\in\FF$, there is another natural digraph, $\Gamma=\Gamma(\cX)$, associated with $\cX$.
Its vertex set equals $\PP\cX:=\{[x]=\FF x;\;\;x\in\cX\setminus\{0\}\}$ (i.e., a vertex in $\Gamma(\cX)$ is a one-dimensional subspace of~$\cX$), and two vertices $[x],[y]$ form a directed edge (denoted $[x]\rightarrow [y]$) if some, hence any, of their representatives $x\in[x]$ and $y\in[y]$ satisfy $x\perp y$. Observe that $\Gamma(\cX)$ has no loops. Observe also that $\Gamma(\cX)$ is a quotient graph of the induced subgraph of $\Gamma_0(\cX)$ obtained by removing its only loop vertex, under the relation $\FF x=\FF y$ on $\Gamma_0(\cX)\setminus\{0\}$. We call $\Gamma=\Gamma(\cX)$ an \emph{orthodigraph} in the sequel.

Because an orthodigraph was defined over a projectivization rather than a space itself, it encodes one additional piece of information besides the BJ-orthogonality, namely, linear dependence between two vectors. This subtlety is by no means automatic (see~
\cite[Example~3.9]{AGKRZ23}) and may on  the one hand appear rather restrictive, while, on the other hand, makes our key results valid for general norms.

For a subset $\Omega$ we denote by $|\Omega|$  its cardinality. If $S$ is a subset of a (general)  digraph $\hat{\Gamma}$ then $x\rightarrow S$ will be a shorthand for the statement that  $x\rightarrow s$ for every $s\in S$, i.e., $x$ forms a directed edge with every element in $S$. Recall that a clique in digraph $\hat{\Gamma}$ is a subset of pairwise connected vertices.  Given a vertex $x$ in a (general) digraph $\hat{\Gamma}$, we let
$$x^\bot:=\{y\in\hat{\Gamma};\;\; x\rightarrow y\}$$
be its \emph{outgoing neighborhood}, i.e., the set of vertices that have an edge from~$x$. Also,
$${}^\bot x:=\{y\in\hat{\Gamma};\;\; y\rightarrow x\}$$
denotes its \emph{incoming neighborhood}, i.e., the set of vertices that have an edge to~$x$. Notice that for $[x]\in\Gamma(\cX)$ we have $$[x]^\bot=\{[y]\in\Gamma(\cX);\;\;[x]\perp[y]\};$$ similarly for $x^\bot$  with $x\in\Gamma_0(\cX)$. Actually, since BJ-orthogonality is homogeneous,
\begin{equation}
\begin{aligned}
   \relax [z]\in [x]^\bot &\quad\hbox{ if and only if }\quad [z]\subseteq x^\bot & (x\in\cX\setminus\{0\});\\
    z\in x^\bot\setminus\{0\} &\quad\hbox{ if and only if }\quad [z]\in[x]^\bot &(x\in\cX\setminus\{0\}).\label{eq15}
\end{aligned}
\end{equation}
Similar description/connections hold for/between   ${}^\bot x$ and   ${}^\bot[x]$.

Given a subset of vertices $S\subseteq\Gamma_0(\cX)$, its span, $\Span S$, is the set of vertices obtained by taking the linear span of $S$ in the vector space $\cX$.
With our first  result we show that the  operation of taking the spans is   graphological  in smooth normed spaces (that is, the span can be computed by studying the connection in the graph alone and, consequently,  $\Phi(\Span S)=\Span\Phi(S)$ for every bijection $\Phi$ which preserves BJ-orthogonality in both directions). We furthermore show that the  dimension is also a graphological property in a general  normed space.

\begin{thm}\label{thm31} Let $\cX$ be a normed space over $\mathbb F$. Then $\dim\cX<\infty$ if and only if the clique number of  $\Gamma_0(\cX)$ is finite.   If the clique number of  $\Gamma_0(\cX)$ is finite, then
\begin{equation}\label{eq:dim}
    \dim\cX=\min\bigg\{|\Omega|;\; \Omega \text{   a finite subset of } \Gamma_0(\cX)\hbox{ with } \bigg|\bigcap_{x\in\Omega} x^\bot\bigg| = 1\bigg\}.
\end{equation}
Furthermore, if $\cX$ is a smooth finite-dimensional space and $S$ is a subset of $\Gamma_0(\cX)$, then
\begin{gather*}
\mathrm{span}(S)=\bigcap_{\substack{x\in\Gamma_0(\cX)\\ x\rightarrow S}} x^\bot \quad \text{and}\\
\mathrm{dim}(\mathrm{span}(S))=\dim\cX-\min\bigg\{|\Omega|;\; \Omega\subseteq\Gamma_0(\cX)\text{ with }\bigcap_{\substack{x\in\Omega\\ x\rightarrow S}} x^\bot=\mathrm{span}(S)\bigg\}.
\end{gather*}
\end{thm}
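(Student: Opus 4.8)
The plan is to recast Birkhoff--James orthogonality through James' theorem: $x\perp y$ exactly when some norm-one functional $f$ with $f(x)=\|x\|$ satisfies $f(y)=0$. Writing $J(x)$ for the set of such supporting functionals, one gets $x^\bot=\bigcup_{f\in J(x)}\ker f$, and in a \emph{smooth} space $J(x)=\{f_x\}$ is a singleton, so $x^\bot=\ker f_x$ is a genuine hyperplane; I will also use that in finite dimension every norm-one functional attains its norm and that the relation $x\perp v$ is closed in $x$. For the finiteness assertion, if $\{x_i\}$ is a clique of unit vectors then any two satisfy, say, $x_i\perp x_j$, whence $\|x_i-x_j\|\ge\|x_i\|=1$; thus such a clique is $1$-separated and, by compactness of the sphere, finite. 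Conversely, in infinite dimension I would build an infinite clique inductively: given unit vectors $x_1,\dots,x_m$ with chosen $f_k\in J(x_k)$, the subspace $\bigcap_k\ker f_k$ has codimension at most $m$ and so is nonzero; any unit $x_{m+1}$ in it satisfies $x_k\perp x_{m+1}$ for $k\le m$, producing a sequence with $x_i\perp x_j$ whenever $i<j$.

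For the dimension formula write $n=\dim\cX$ (finite, by the above) and note $0\in x^\bot$ always, so $\bigl|\bigcap_{x\in\Omega}x^\bot\bigr|=1$ means this intersection is $\{0\}$. Choosing one $f_i\in J(x_i)$ for each $x_i\in\Omega$ gives $\bigcap_i\ker f_i\subseteq\bigcap_i x_i^\bot$, and the left-hand side has dimension at least $n-|\Omega|$; hence $\bigcap_i x_i^\bot=\{0\}$ forces $|\Omega|\ge n$. For the reverse inequality I would invoke density of smooth points (Mazur) together with the spanning lemma that the functionals $f_x$, as $x$ ranges over smooth points, span $\cX^{*}$. The lemma holds because if some $v\ne0$ were annihilated by all such $f_x$, then $x\perp v$ for every smooth $x$, hence (by density and closedness of $\perp$ in the first slot) for every $x$, which is absurd at $x=v$. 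Selecting $n$ smooth points with $f_{x_1},\dots,f_{x_n}$ independent then gives $\bigcap_i x_i^\bot=\bigcap_i\ker f_{x_i}=\{0\}$.

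The span formula in the smooth case is a Hahn--Banach computation; put $V=\Span S$. One inclusion is clear: if $x\rightarrow S$ then $f_x$ kills $S$, hence all of $V$, so $V\subseteq\ker f_x=x^\bot$, and $V$ lies in the intersection. For the reverse, given $w\notin V$ I would pick a norm-one $g$ with $g|_V=0$ and $g(w)\ne0$; its norm-attaining point $x$ has $f_x=g$ by smoothness, so $x\rightarrow S$ while $w\notin\ker g=x^\bot$, showing $w$ is not in the intersection.

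Finally, for the dimension-of-span formula, discarding from $\Omega$ those vertices not orthogonal to $S$ never raises $|\Omega|$, so the minimum is the least number of hyperplanes $x^\bot=\ker f_x$ with $x\rightarrow S$ (each containing $V$, by the previous paragraph) whose intersection is exactly $V$. Intersecting at most $|\Omega|$ hyperplanes through $V$ cannot cut the dimension below $n-|\Omega|$, giving the lower bound $|\Omega|\ge n-\dim V$. The crux is the matching upper bound, namely that the functionals $f_x$ coming from points $x\rightarrow S$ span the annihilator $V^{\perp}$; here I expect the main obstacle, as the quotient $\cX/V$ need not be smooth and a direct passage to it is unavailable. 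I would instead bootstrap the already-proved span formula: if those functionals spanned only a proper subspace of $V^{\perp}$, some $v\notin V$ would be annihilated by all of them, so every $x\rightarrow S$ would also satisfy $x\perp v$; then the vertices orthogonal to $S$ and to $S\cup\{v\}$ coincide, and the span formula applied to both yields $V=\Span S=\Span(S\cup\{v\})\supsetneq V$, a contradiction. Choosing $n-\dim V$ points whose functionals form a basis of $V^{\perp}$ then realizes the minimum and finishes the proof.
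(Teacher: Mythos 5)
Your proposal is correct, and its overall skeleton (reduce everything to supporting functionals via James' characterization, then count kernels and codimensions) matches the paper's. The genuine divergences are in two places. First, for the equivalence of finite dimension with finite clique number, the paper simply cites earlier work, whereas you give a self-contained argument: the $1$-separation $\|x_i-x_j\|\ge 1$ plus compactness of the sphere in one direction, and the inductive construction of $x_{m+1}$ in $\bigcap_k\Ker f_k$ in the other; this is a clean, elementary replacement (just take care that a clique in $\Gamma_0$ may contain $0$ and must be normalized before invoking separation, which is harmless by homogeneity). Second, and more substantively, to produce $n$ points whose supporting functionals span $\cX^*$ the paper passes to the dual: it identifies supporting functionals of smooth points with exposed points of $\cX^*$ and invokes Straszewicz's theorem that the convex hull of exposed points is dense in the unit ball. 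You instead use Mazur's density of smooth points together with closedness of $\{x:\,x\perp v\}$, concluding that no nonzero $v$ can be annihilated by every $f_x$ with $x$ smooth since that would force $v\perp v$. Your route avoids the dual-space machinery and is arguably more elementary; the paper's route yields the reusable Lemma~\ref{exposed-smooth} and Corollary~\ref{lem:lin-indep-smooth}, which it needs again elsewhere (e.g.\ in the proof of Theorem~\ref{main3}). For the span and dimension-of-span formulas the two arguments essentially coincide; your bootstrap via $\Span(S\cup\{v\})=\Span S$ is valid but slightly roundabout, since your own Hahn--Banach step already shows directly that every normalized functional vanishing on $\Span S$ equals $f_x$ for some $x\rightarrow S$, which is all the spanning of the annihilator requires.
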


\begin{rem}\label{remark1}

The proof will show that both formulas for the dimension inside Theorem \ref{thm31} also hold if $\Gamma_0(\cX)$ (which contains exactly one loop vertex) is replaced by $\Gamma(\cX)$ (which contains no  loop vertices), the only difference is that in \eqref{eq:dim} we minimize the cardinality of subsets $\Omega\subseteq\Gamma(\cX)$ with $\left|\bigcap_{x\in\Omega} x^\bot\right|=0$. In this light we can define the dimension, $\dim\hat{\Gamma}$, of  any digraph $\hat{\Gamma}$, as follows:
\begin{enumerate}
    \item[(i)] $\dim\hat{\Gamma}=\infty$ if $\hat{\Gamma}$  contains an infinite clique,
    \item[(ii)] $\dim\hat{\Gamma}$ is given by \eqref{eq:dim} if $\hat{\Gamma}$ has a finite clique number and a loop vertex,
    \item[(iii)] $\dim\hat{\Gamma}=\min\left\{|\Omega|;\; \Omega \text{ is  a finite subset of } \hat{\Gamma}(\cX)\hbox{ with } \left|\bigcap_{x\in\Omega} x^\bot\right|=0\right\}$ if $\hat{\Gamma}$ is loopless with a finite clique number.
\end{enumerate}
(We follow the convention that $\min\varnothing =\infty$.)
\end{rem}

Recall that a nonzero point $x$ in a vector space $\cX$ is smooth if it has a unique supporting functional (a normalized functional $f$ is supporting for $x$ if $f(x)=\|x\|$). It is easy to see that  $x$ is smooth if and only if $\lambda x$ is smooth for every nonzero $\lambda\in\FF$. With this in mind, we call a point $[x]$ in a projective space $\PP\cX$ to be smooth if some, hence any, of its representatives is a smooth point in $\cX$.

A face of the norm's unit ball is a convex subset $F$ on the norm's unit sphere such that if $x,y$ are different normalized vectors whose   open line segment $\{\lambda x+(1-\lambda y);\; 0<\lambda <1\}$ intersects $F$, then already $x,y\in F$.
We define $\mathcal F\subseteq\PP(\cX)$ to be a face if we can choose a representative for each $[x]\in\mathcal F$ such that $\{x;\; [x]\in\PP(\cX)\}$ is a face of $\cX$. A face is maximal if it is not properly contained in a bigger face.

Given a (nonzero) point in a normed space, can we determine if it is smooth from BJ-orthogonality relation alone? Such a point corresponds to some  vertex in orthodigraph and we can equivalently ask: Can we determine whether this vertex comes from a smooth point by studying only its connections within the digraph? We give an affirmative answer in Lemmas~\ref{lemma:two-dimensional-projective} and~\ref{lemma:high-dimensional-projective} below but the answer is rather technical (likewise we show in Corollary \ref{cor:maximalfaceclassify} below how to collect all vertices that correspond to a maximal face).  We temporarily content ourselves  with a slightly less precise corollary of this classification: Every bijection between the projectivizations of normed spaces~$\cX$ and~$\cY$ which strongly preserves BJ-orthogonality (that is, every isomorphism  between orthodigraphs) must preserve smooth points  and maximal faces. Recall that the dimension of orthodigraph was defined within Remark~\ref{remark1}.

\begin{thm}\label{thm41}
    Let $\cX, \cY$ be normed spaces over $\FF$. If orthodigraph $\Gamma(\cX)$ is finite-dimensional  then an orthodigraph isomorphism $\Gamma(\cX)\to\Gamma(\cY)$ maps smooth points onto smooth points and  maps maximal faces onto maximal faces.
    Furthermore, $\cY$ is finite-dimensional and $\dim\cX=\dim\cY$.
\end{thm}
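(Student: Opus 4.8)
The plan is to show that \emph{dimension}, \emph{smoothness}, and \emph{maximal-face membership} are all graphological, i.e. expressible purely in the combinatorial language of the digraph, so that any orthodigraph isomorphism $\Phi\colon\Gamma(\cX)\to\Gamma(\cY)$ automatically transports them. The reverse inclusions (``onto'') then come for free by applying the same arguments to $\Phi^{-1}$.

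First I would settle the dimension claim, which I expect to be the most routine part. I would relate the clique numbers of $\Gamma(\cX)$ and $\Gamma_0(\cX)$: in a clique of $\Gamma_0(\cX)$ avoiding the loop vertex $0$ any two vertices are connected, hence linearly independent (linearly dependent nonzero vectors are never BJ-orthogonal in either direction), so they span pairwise distinct lines and project to a clique of the same size in $\Gamma(\cX)$, and conversely every clique of $\Gamma(\cX)$ lifts; since $0\to y$ and $y\to 0$ for all $y$, adjoining $0$ enlarges a lifted clique by exactly one. Thus the clique numbers of $\Gamma_0(\cX)$ and $\Gamma(\cX)$ are finite simultaneously. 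By Theorem~\ref{thm31} finiteness of $\dim\cX$ is equivalent to finiteness of the clique number of $\Gamma_0(\cX)$; transporting finiteness through the digraph isomorphism $\Phi$ forces $\Gamma_0(\cY)$ to have finite clique number, whence $\dim\cY<\infty$. Finally, by Remark~\ref{remark1} both $\dim\cX$ and $\dim\cY$ coincide with the digraph dimension computed by the loopless formula~(iii); as $\Phi$ preserves outgoing neighborhoods, their intersections, and cardinalities, it preserves this quantity, giving $\dim\cX=\dim\cY$.

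For the preservation of smooth points I would lean on the characterization to be established in Lemmas~\ref{lemma:two-dimensional-projective} and~\ref{lemma:high-dimensional-projective}. The underlying geometric fact is that $x\perp y$ holds exactly when some supporting functional of $x$ annihilates $y$, so that $x^\bot=\bigcup_f\Ker f$ as $f$ ranges over the supporting functionals of $x$; this set is the projectivization of a single hyperplane when $x$ is smooth and a union of at least two hyperplanes otherwise. The lemmas recast the condition ``$[x]^\bot$ is the projectivization of a hyperplane'' as a purely combinatorial condition on the digraph, using the dimension-counting machinery of Theorem~\ref{thm31} and treating the planar and higher-dimensional cases separately. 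Since $\Phi([x]^\bot)=\Phi([x])^\bot$ and $\Phi$ respects every combinatorial ingredient of that condition, $\Phi$ sends smooth vertices to smooth vertices. The preservation of maximal faces is obtained in the same way from Corollary~\ref{cor:maximalfaceclassify}, which assembles the vertices of a maximal face by a digraph-intrinsic recipe that $\Phi$ respects.

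The main obstacle I anticipate is the smoothness characterization itself. One cannot simply declare ``$[x]^\bot$ is a subspace'' and detect subspaces through the span, because Theorem~\ref{thm31} only guarantees that taking spans is graphological in \emph{smooth} spaces, whereas here smoothness is exactly what is to be recognized. The resolution, carried out in the two lemmas, is to detect a hyperplane $[x]^\bot$ by a dimension-counting condition that does not presuppose smoothness of the ambient space, and to handle separately the plane---where a hyperplane is merely a line and the union-of-hyperplanes picture degenerates---from the higher-dimensional case.
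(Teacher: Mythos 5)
Your proposal is correct and follows essentially the same route as the paper: the dimension claim is reduced to Theorem~\ref{thm31} and the loopless formula of Remark~\ref{remark1}, smoothness preservation is delegated to the combinatorial characterizations in Lemmas~\ref{lemma:two-dimensional-projective} and~\ref{lemma:high-dimensional-projective}, and maximal faces are handled via Corollary~\ref{cor:maximalfaceclassify}. The only difference is that you spell out the clique-number transfer between $\Gamma_0(\cX)$ and $\Gamma(\cX)$, a detail the paper leaves implicit.
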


Our final main result allows us to distinguish the supremum norm on a finite-dimensional space from any other norm in terms of $\Gamma(\cX)$ alone.

\begin{thm}\label{main3} Let $(\cX, \norm)$ be a finite-dimensional  normed space over $\mathbb F$. Then the following conditions are equivalent:
  \begin{itemize}
    \item[(i)] There exists a linear bijection $A\colon \cX\to\FF^n$  such that $\|x\|=\|Ax\|_{\infty}$ for some~$n\in\NN$.
    \item[(ii)] The cardinality of $\{x^\bot;\;\;x\in\Gamma(\cX),\hbox{ and $x$ is smooth}\}$ is minimal possible among all the norms on $\cX$.
    \item[(iii)] $|\{x^\bot;\;\;x\in\Gamma(\cX),\hbox{ and $x$ is smooth}\}|=\dim\cX$.
    \end{itemize}
\end{thm}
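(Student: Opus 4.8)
The plan is to interpret the quantity $|\{x^\bot : x\in\Gamma(\cX),\ x \text{ smooth}\}|$ analytically and to show that it is always at least $\dim\cX$, with equality precisely for supremum norms. First I would record the standard description of the outgoing neighborhood of a smooth vertex: if $[x]$ is smooth with (unique up to a scalar) supporting functional $f_x\in\cX^*$, then by James's characterization $x\perp y$ holds if and only if $f_x(y)=0$, so $x^\bot=\PP(\ker f_x)$. Consequently two smooth vertices share the same outgoing neighborhood exactly when their supporting functionals are proportional, whence
\[
|\{x^\bot : x \text{ smooth}\}| = |\{[f_x]\in\PP(\cX^*) : x \text{ smooth}\}|,
\]
the number of distinct (real or complex) lines through the supporting functionals of smooth points. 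As $x$ ranges over smooth points, $f_x$ ranges exactly over the exposed points of the dual unit ball $B^*\subseteq\cX^*$.

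Next I would establish the lower bound. By Straszewicz's theorem together with Krein--Milman, $B^*$ is the closed convex hull of its exposed points; since $B^*$ has nonempty interior, these exposed points span $\cX^*$. If they lay on only $k$ lines, their span would have dimension at most $k$ (in the complex case one passes to the real $2d$-dimensional picture, where each complex line contributes a real $2$-plane, forcing $2k\ge 2d$); hence $k\ge\dim\cX$ for every norm. For the supremum norm on $\FF^n$ the dual ball is the $\ell_1$-ball, whose exposed points are the $2n$ vectors $\pm e_i$ in the real case, or the $n$ unit circles $\{\lambda e_i:|\lambda|=1\}$ in the complex case; in either case they occupy exactly $n$ lines. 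This shows that the minimum in (ii) equals $\dim\cX$ and is attained, which immediately yields (i)$\Rightarrow$(iii) (a linear isometry onto $(\FF^n,\norm_\infty)$ forces $n=\dim\cX$) and the equivalence (ii)$\Leftrightarrow$(iii) (minimality is the same as attaining the value $\dim\cX$).

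The substantive implication is (iii)$\Rightarrow$(i). Assume the $d=\dim\cX$ supporting functionals of smooth points lie on exactly $d$ lines $\FF f_1,\dots,\FF f_d$, which are then linearly independent and span $\cX^*$. In the real case, every extreme point of $B^*$ is a limit of exposed points and therefore lies on $\bigcup_i\RR f_i$; since $B^*$ is symmetric, $B^*\cap\RR f_i$ is a segment with endpoints $\pm b_if_i$, so the extreme points can only be these endpoints and $B^*=\mathrm{conv}\{\pm b_if_i : i\}$ is a cross-polytope. Dualizing gives $\|x\|=\max_i|b_if_i(x)|$, that is $\|x\|=\|Ax\|_\infty$ for $Ax=(b_if_i(x))_i$. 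In the complex case the family of supporting functionals is invariant under multiplication by unit scalars, so on each line it fills the whole unit circle; passing to the real dual ball, the real-exposed points form $d$ ellipses lying in the independent real $2$-planes $P_i=\mathrm{span}_\RR(\mathrm{Re}\,f_i,\mathrm{Im}\,f_i)$, whence $B^*_\RR=\mathrm{conv}\big(\bigcup_i D_i\big)$ for the corresponding disks $D_i$, and
\[
\|x\|=\sup_{\phi\in B^*_\RR}\phi(x)=\max_i\sup_\theta\mathrm{Re}\big(e^{i\theta}f_i(x)\big)=\max_i|f_i(x)|,
\]
again a supremum norm via $Ax=(f_1(x),\dots,f_d(x))$.

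The main obstacle I anticipate is this last step, and specifically the complex case: one must argue carefully that the rotation-invariant family of supporting functionals sweeps out full circles, that these circles exhaust the extreme points of $B^*_\RR$, and that their convex hull recovers $B^*_\RR$ \emph{exactly}, so that no extraneous extreme directions survive and the reconstructed norm is genuinely $\max_i|f_i(\cdot)|$. The remaining points — the identification $x^\bot=\PP(\ker f_x)$ for smooth $x$ and the dimension-counting for the lower bound — I expect to be routine given the results already available.
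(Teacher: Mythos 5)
Your proposal is correct and follows essentially the same route as the paper: it identifies the count with the number of lines through exposed points of the dual ball (the paper's Lemma~\ref{exposed-smooth}), derives the lower bound $\ge\dim\cX$ from the fact that these exposed points span $\cX^*$ (Corollaries~\ref{lem:lin-indep-smooth} and~\ref{inequality}), and proves (iii)$\Rightarrow$(i) by confining the extreme points of the dual ball to $\dim\cX$ lines and dualizing the resulting $\ell_1$-type ball. The only cosmetic differences are that you verify the supremum-norm case via the dual $\ell_1$-ball rather than by computing $\partial\|x\|_\infty$ directly, and you invoke Straszewicz's theorem where the paper uses compactness of the set of exposed points to identify them with the extreme points.
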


This has the following immediate application to orthodigraphs of fi\-nite-di\-men\-sional commutative $C^*$-algebras. It is well-known (\cite[II.2.2.4 and II.1.1.3.(2)]{Black}) that any commutative $C^*$-algebra $\mathcal{A}$ is $*$-isomorphic to $C_0(\Omega)$, the $C^*$-algebra of all continuous complex functions on a locally compact Hausdorff space $\Omega$ vanishing at `infinity', with the norm $\|g\| = \max \{|g(t)|; \; g\in \Omega\}$. If $\mathcal{A}$ is finite-dimensional, then $\Omega$ is finite, so $\mathcal{A}$ is isomorphic to $(\CC^n, \norm_{\infty})$ with componentwise multiplication for some $n \in \mathbb{N}$. Hence, Theorem~\ref{main3} solves the problem of classification of commutative $C^\ast$-algebras among general finite-dimensional normed spaces.

We remark that Ryotaro Tanaka~\cite[Proposition 4.5, Theorem~4.6, and Corollary 4.7]{Tan521} has recently obtained a similar classification of supremum norm based on topological properties of ultrafilters associated with  outgoing neighborhoods of $\Gamma_0(\cX)$. Moreover, he has obtained~\cite{Tanaka-personal} another formula  for computing the dimension, which does not directly use (ortho)digraphs as does our formula in \eqref{eq:dim}, but  is  again based on topological properties of ultrafilters associated with $
\Gamma_0(\cX)$. We are indebted to Professor Tanaka for sending us his preprint. A different approach to compute the dimension,  based on BJ-orthogonality in a projective space, can be found in~\cite[Lemma~2.3 and Remark~2.4]{AGKRZ23}.

\medskip

The proofs of our main results will be given in Section \ref{section3}. We now summarize the rest of the results of our paper where we study how far Theorems~\ref{thm41} and~\ref{main3} can be extended from the projective orthodigraph, $\Gamma(\cX)$, to its nonprojective counterpart, $\Gamma_0(\cX)$. For this purpose, in Section~\ref{section:BJ-norms}, we introduce the class of BJ-normed spaces. In particular, if the norm on $\cX$ is rotund or smooth, then $\cX$ is BJ-normed. However, as Example~\ref{example:BJ-norm} shows, the converse is not true. By Corollaries~\ref{corollary:two-dimensional} and~\ref{corollary:high-dimensional}, in BJ-normed spaces smooth points can be distinguished from the nonsmooth ones even in the nonprojective setting, so Theorem~\ref{thm41} remains valid for an isomorphism $\Gamma_0(\cX) \to \Gamma_0(\cY)$. Besides, as shown in Lemma~\ref{lemma:isomorphism-implication}, if $\cX$ and $\cY$ are BJ-normed spaces, then any isomorphism between $\Gamma_0(\cX)$ and $\Gamma_0(\cY)$ induces an isomorphism between $\Gamma(\cX)$ and $\Gamma(\cY)$.

However, these results are no longer valid for arbitrary normed spaces. Recall that a two-dimensional normed space where BJ-othogonality is symmetric is called a Radon plane. In Lemmas~\ref{lem:nnnsmoothRadonplaneRR} and~\ref{lem:nnnsmoothRadonplaneCC}, we establish the necessary and sufficient conditions under which the graph $\Gamma_0(\mathcal{R})$ of a (real or complex) Radon plane~$\mathcal{R}$ is isomorphic to the graph $\Gamma_0(\FF^2, \norm_2)$ of the two-dimensional Hilbert space. In Examples~\ref{example:nonsmooth-Radon-plane} and~\ref{example:complex-Radon-plane}, we construct a nonsmooth Radon plane~$\mathcal{R}$ over $\FF \in \{ \RR, \CC \}$ such that $\Gamma_0(\mathcal{R})$ and $\Gamma_0(\FF^2, \norm_2)$ are isomorphic. Notice, in contrast, that for a nonsmooth Radon plane $\mathcal R$, Theorem~\ref{thm41} implies that $\Gamma(\mathcal{R})$ and $\Gamma(\FF^2, \norm_2)$ are not isomorphic, since the norm on $(\FF^2, \norm_2)$ is smooth. Next, in Example~\ref{example:nonsmooth-point}, we show that the assumption about a BJ-normed space is essential in Corollary~\ref{corollary:high-dimensional}. In Section~\ref{section5}, we end with some related results and concluding remarks.

\section{Proofs of main results}\label{section3}
It will be beneficial to utilize the norm's subdifferential at a vector $x$, denoted by $\partial\|x\|$ (see \cite[Definition~1.9]{Phelps} and \cite[Definition 1.2.4]{Grover}), which is defined as
$$\partial\|x\|=\{f\in\cX^*; \; \|y\|-\|x\|\geq \mathrm{Re} \left(f(y-x)\right)\text{ for all }y\in\cX\}.$$
We note that the subdifferential set at a nonzero vector $x$ coincides with the set of all supporting functionals at $x$ (see \cite[Lemma~5.10]{Phelps} and \cite[Example 1.2.16]{Grover}). By~\cite[Theorem 2.1]{James1} (which is stated for real spaces but the proof works for complex ones as well), we have $x \perp y$ if and only if there exists a supporting functional~$f$ at~$x$  which annihilates~$y$. Hence
\begin{equation}\label{eq36}
x^\bot =
\bigcup_{f\in\partial\|x\|}\Ker f.
\end{equation}

Recall that, if $\cX$ is a complex normed space with $\dim \cX = n$, by restriction of scalars we may regard it as a $2n$-dimensional real normed space, which, to avoid the confusion, we denote as $(\cX_{\RR},\norm_{\RR})$. If $\cX$ is a real normed space, then we set $\cX_{\RR} = \cX$. Then a face $F$ of the norm's unit ball in $\cX$ is called {\em exposed} if
$$F=\{x\in\cX;\;\;\|x\|=1 \hbox{ and } f(x)=\|f\|\}$$ for some $\RR$-linear functional $f$ on $\cX$, i.e., $f\in \cX_{\RR}^*$, the dual space of $\cX_{\RR}$.

\begin{lem} \label{proposition:exposed-faces}
Let $\cX$ be a normed space over $\FF \in \{ \RR, \CC \}$. The face $F$ of the norm's unit ball in $\cX$ is exposed if and only if
\begin{equation} \label{eq:exposed-face-def}
    F=\{x\in\cX;\;\;\|x\|=1 \hbox{ and } f(x)=\|f\|\}
\end{equation}
for some $\FF$-linear functional $f$ on $\cX$, i.e., $f\in \cX^*$.
\end{lem}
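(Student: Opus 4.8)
The plan is to dispose of the real case trivially and then reduce the complex case to the standard correspondence between $\RR$-linear and $\CC$-linear functionals. When $\FF=\RR$ we have $\cX_{\RR}=\cX$, and the two displayed descriptions of $F$ literally coincide, so there is nothing to prove. Assume therefore $\FF=\CC$. Recall that every $\RR$-linear functional $g\in\cX_{\RR}^*$ is the real part of a unique $\CC$-linear functional, namely $f(x)=g(x)-i\,g(ix)$; that conversely $g:=\mathrm{Re}\,f$ is $\RR$-linear for every $f\in\cX^*$; and that in both directions the norms agree, $\|g\|_{\cX_{\RR}^*}=\|f\|_{\cX^*}$. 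I would record this norm identity first, as it is the only quantitative input needed below; its proof is the usual rotation trick (choosing a unimodular scalar to make $f(x)$ real and nonnegative at a near-maximizing $x$).

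The crux is the following pointwise observation, which I would isolate as the main step: for a fixed $f\in\cX^*$ with $g=\mathrm{Re}\,f$ and any $x$ on the unit sphere,
\[
f(x)=\|f\|\qquad\Longleftrightarrow\qquad g(x)=\|g\|.
\]
The forward implication is immediate, since $g(x)=\mathrm{Re}\,f(x)=\|f\|=\|g\|$. For the reverse implication, suppose $g(x)=\|g\|=\|f\|$. Then $\mathrm{Re}\,f(x)=\|f\|$, while the operator-norm bound gives $|f(x)|\le\|f\|\,\|x\|=\|f\|$; combining these forces $|f(x)|=\|f\|=\mathrm{Re}\,f(x)$, whence $\mathrm{Im}\,f(x)=0$ and $f(x)=\|f\|$. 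Thus the subsets of the unit sphere cut out by the two equations coincide. I expect this to be where the only genuine subtlety lies: a priori the complex equation $f(x)=\|f\|$ looks strictly stronger than the real equation $g(x)=\|g\|$, because it additionally pins down the imaginary part, and the content of the step is precisely that on the unit sphere this extra constraint is automatic.

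With these two ingredients the equivalence follows by unwinding the definition. If $F$ is exposed in the sense of the text, it equals $\{x\in\cX;\ \|x\|=1,\ g(x)=\|g\|\}$ for some $g\in\cX_{\RR}^*$; taking $f(x)=g(x)-i\,g(ix)$ and applying the pointwise observation together with $\|f\|=\|g\|$ rewrites this set exactly as \eqref{eq:exposed-face-def}, giving one direction. Conversely, if $F$ has the form \eqref{eq:exposed-face-def} for some $f\in\cX^*$, then setting $g=\mathrm{Re}\,f\in\cX_{\RR}^*$ and again applying the pointwise observation exhibits $F$ as the face exposed by the $\RR$-linear functional $g$, so $F$ is exposed. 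No further case analysis is needed, and the argument is uniform in $\dim\cX$.
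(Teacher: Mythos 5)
Your proposal is correct and follows essentially the same route as the paper: both reduce the complex case to the standard correspondence $f\leftrightarrow\mathrm{Re}\,f$ with equal norms (the paper cites Rudin for this), and both hinge on the observation that at a unit vector where the real part attains the norm, the imaginary part of $f(x)$ is forced to vanish. Your pointwise formulation via $|f(x)|\le\|f\|$ together with $\mathrm{Re}\,f(x)=\|f\|$ is just a slight repackaging of the paper's computation with $\sqrt{|f(x)|^2+|f(ix)|^2}$.
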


\begin{proof}
If $\FF = \RR$, then the statement is immediate from the definition of an exposed face, so let $\FF = \CC$. Assume first that $F$ is an exposed face, and let $f$ be the corresponding $\RR$-linear functional. Then it is a real part of a $\CC$-linear functional $\widetilde{f}(x):=f(x)-if(ix)$ which has the same norm as $f$. For any $x \in F$, we have $\| f \| = \| \widetilde{f} \| \ge|\widetilde{f}(x)|=|f(x)-i f(ix)|=\sqrt{|f(x)|^2+|f(ix)|^2}\ge |f(x)| = \| f \|$, so $f(ix)=0$ and $\widetilde{f}(x) = f(x) = \| \widetilde{f} \|$. Thus $F=\{x\in\cX;\;\|x\|=1 \hbox{ and } \widetilde{f}(x)=\|\widetilde{f}\|\}.$

Conversely, let $F=\{x\in\cX;\;\|x\|=1 \hbox{ and } f(x)=\|f\|\}$ for some $\CC$-linear functional $f$ on $\cX$. By~\cite[p.~58]{RUD}, its real part $(\mathrm{Re}\,f)$ is an $\RR$-linear functional which satisfies $\|(\mathrm{Re}\,f)\|=\|f\|$. Therefore, $F=\{x\in\cX;\;\|x\|=1 \hbox{ and } (\mathrm{Re}\,f)(x)=\|(\mathrm{Re}\,f)\|\}$, so $F$ is an exposed face.
\end{proof}

Recall that an {\em exposed point} of the norm's unit ball is a singleton exposed face. Note that, for an exposed point $x$, the functional $f$ in \eqref{eq:exposed-face-def} may not be unique, so there might exist two distinct functionals $f$ and $g$ of unit norm such that $$\{x\}=\{x\in\cX;\;\;\|x\|=1 \hbox{ and } f(x)=g(x)=1\}.$$
The smooth points of a normed space are related to the exposed points of the dual space as follows.

\begin{lem}\label{exposed-smooth}
   Let $(\cX,\norm)$ be a finite-dimensional normed space over $\FF$. The set of supporting functionals of smooth points in~$\cX$ coincides with the set $\Exp(\cX^\ast)$ of exposed points in the dual norm.
\end{lem}
\begin{proof}
    Let $x\in\cX$ be a smooth point and  $f$ be its  unique supporting functional. Let us consider the evaluation functional $\varphi_x$ defined on $\cX^*$ by $\varphi_x(g) = g(x)$. Note that, for $g$ of norm one, we have $\varphi_x(g)=1$ if and only $g$ is a supporting functional of $x$. Since $f$ is the unique supporting functional of $x$, we have
    $$\{f\}=\{g\in \cX^*;\; \|g\|= \varphi_x(g)=1\}.$$ So, $f$ is an exposed point of $\cX^*$.

    Conversely, let $f$ be an exposed point of $\cX^*$. Then there exists a functional $\varphi$ on $\cX^*$ of norm one such that $\{f\}=\{g\in\cX^*;\; \|g\|=\varphi(g)=1\}.$ Since $\cX$ is reflexive, there exists $x\in \cX$ such that $\varphi=\varphi_x$. It means $$\{f\}=\{g\in\cX^*;\; \|g\|=\varphi_x(g)=g(x)=1\}.$$ So, $f$ is the unique supporting functional for $x$. Hence, $x$ is a smooth point whose supporting functional is $f$.
\end{proof}

We note that, in any  finite-dimensional normed space $\cX$ over $\FF$, the convex span of its exposed points is dense in its closed unit ball. For  $\FF=\RR$, the proof can be found in \cite[Theorem 18.7]{Rockafellar}. For the case $\FF=\CC$, we note that the set of exposed points in $\cX$ coincides with the set of exposed points in $\cX_\RR$, and then the real case can be applied.

\begin{cor}\label{lem:lin-indep-smooth}
  Let $(\cX,\norm)$ be a finite-dimensional normed space over $\FF$. Then~$\cX$ has at least $n=\dim\cX$ smooth points whose supporting functionals are linearly independent.
  \end{cor}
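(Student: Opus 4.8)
The plan is to read off the statement from the two facts recorded immediately before it: by Lemma~\ref{exposed-smooth} the supporting functionals of the smooth points of $\cX$ are precisely the exposed points $\Exp(\cX^*)$ of the dual norm, and by the cited density result the convex hull of the exposed points of any finite-dimensional normed space is dense in its closed unit ball. Applying the density statement to the finite-dimensional space $\cX^*$ (rather than to $\cX$), I would first record that $\mathrm{conv}(\Exp(\cX^*))$ is dense in the closed unit ball $B^*$ of $\cX^*$. The whole argument then reduces to showing that $\Exp(\cX^*)$ cannot lie in a proper subspace, from which linear independence of a suitable $n$-element subset is immediate.

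To see this, suppose toward a contradiction that $\Span_{\FF}\Exp(\cX^*)$ is a proper $\FF$-subspace $W\subsetneq\cX^*$. In the real case $W$ is a proper $\RR$-subspace of $\cX^*$; in the complex case $W$ is a proper $\CC$-subspace, hence of real codimension at least two, so it is again a proper subspace of $(\cX^*)_\RR$. In either case $W$ is closed and has empty interior. Since $\mathrm{conv}(\Exp(\cX^*))\subseteq W$, its closure also lies in $W$; but by density this closure contains $B^*$, which has nonempty interior. This contradiction forces $\Span_{\FF}\Exp(\cX^*)=\cX^*$.

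Because $\dim_{\FF}\cX^*=\dim_{\FF}\cX=n$, I can then extract from $\Exp(\cX^*)$ an $\FF$-basis $f_1,\dots,f_n$ of $\cX^*$, which is in particular $\FF$-linearly independent. By Lemma~\ref{exposed-smooth}, each $f_i$ is the unique supporting functional of some smooth point $x_i\in\cX$. The resulting points $x_1,\dots,x_n$ are $n$ smooth points of $\cX$ whose supporting functionals $f_1,\dots,f_n$ are linearly independent, which is exactly the assertion.

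I expect the only genuine subtlety to be the bookkeeping of scalars in the complex case: one must make sure that ``exposed'' and ``smooth'' are taken consistently over $\FF$ (which the preceding lemmas and the remark on density already guarantee, via the coincidence of exposed points of $\cX$ and of $\cX_{\RR}$) and that the span argument delivers independence over $\CC$ rather than merely over $\RR$. The latter is automatic here, since the object I control throughout is the $\FF$-linear span, and a proper $\CC$-subspace is still a proper (indeed empty-interior) real subspace, so the density contradiction goes through unchanged.
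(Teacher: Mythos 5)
Your proposal is correct and follows essentially the same route as the paper: both invoke Lemma~\ref{exposed-smooth} together with the density of the convex hull of $\Exp(\cX^\ast)$ in the closed unit ball of $\cX^\ast$ to extract a basis of exposed points, and then pull back to smooth points of $\cX$. The only difference is that you spell out explicitly (via the empty-interior contradiction) why the density forces $\Exp(\cX^\ast)$ to span $\cX^\ast$, a step the paper's proof asserts without elaboration.
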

\begin{proof} As shown above, the convex span of $\Exp(\cX^\ast)$, the set of all exposed points in a dual space, is dense in the closed unit ball. So $\cX^\ast$ has a basis consisting of exposed points $f_1,\dots ,f_n\in\Exp(\cX^\ast)$. By Lemma \ref{exposed-smooth}, these are the  supporting functionals of some  smooth points $x_1,\dots,x_n$.
\end{proof}

\begin{rem}
  Note that this has no counterpart in infinite-dimensional spaces. In fact, the Banach space $\ell_1(\Lambda)$  of summable sequences over an uncountable index set~$\Lambda$ has no smooth points; see \cite[Example 1.4(b)]{Phelps}.
\end{rem}

\begin{cor}\label{inequality}  Let $\cX$ be a finite-dimensional  normed space over $\mathbb F$. Then
\begin{equation}\label{89}\left|\{x^\bot ;\; x\in\Gamma(\cX), \text{ and } x\text{ is smooth}\}\right|\geq \mathrm{dim}(\cX).\end{equation}
\end{cor}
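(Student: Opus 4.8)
The plan is to prove the inequality
\[
\left|\{x^\bot ;\; x\in\Gamma(\cX), \text{ and } x\text{ is smooth}\}\right|\geq \dim(\cX)
\]
by exhibiting $n=\dim\cX$ distinct sets of the form $x^\bot$ arising from smooth points. The natural source of such points is Corollary~\ref{lem:lin-indep-smooth}, which supplies smooth points $x_1,\dots,x_n$ whose (unique) supporting functionals $f_1,\dots,f_n$ are linearly independent. Since each $x_i$ is smooth, its subdifferential $\partial\|x_i\|$ is the singleton $\{f_i\}$, and formula~\eqref{eq36} collapses to the clean identity $x_i^\bot=\Ker f_i$. Thus the task reduces to showing that the $n$ hyperplanes $\Ker f_1,\dots,\Ker f_n$ are pairwise distinct, which will give at least $n$ distinct elements in the set on the left-hand side.

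First I would record, for each smooth point $x_i$, that by~\eqref{eq36} and smoothness we have $x_i^\bot=\Ker f_i$, a genuine hyperplane of codimension one in $\cX$. The heart of the argument is then the separation step: I claim that linear independence of the functionals forces the kernels to be distinct. Indeed, if $\Ker f_i=\Ker f_j$ for some $i\neq j$, then two $\FF$-linear functionals with the same kernel must be scalar multiples of one another, so $f_i=\lambda f_j$ for some nonzero $\lambda\in\FF$; this contradicts the linear independence of $\{f_1,\dots,f_n\}$. Hence the map $i\mapsto x_i^\bot=\Ker f_i$ is injective, producing $n$ distinct sets.

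One subtlety worth addressing explicitly is that the left-hand side counts sets $x^\bot$ indexed by vertices $x\in\Gamma(\cX)$, i.e.\ by one-dimensional subspaces rather than by vectors; but by the homogeneity relations in~\eqref{eq15} the set $[x_i]^\bot$ (equivalently $x_i^\bot$ at the projective level) is well defined and still equals $\Ker f_i$, and smoothness of a point in $\PP(\cX)$ was defined precisely so that it is inherited from any representative. So the $n$ distinct kernels $\Ker f_i$ correspond to $n$ distinct smooth vertices of $\Gamma(\cX)$ with distinct outgoing neighborhoods, establishing the bound.

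I expect no serious obstacle here: the only place requiring care is to make sure the chosen points are genuinely smooth (handled entirely by Corollary~\ref{lem:lin-indep-smooth}) and that the passage from vectors to projective vertices does not identify any of the $x_i^\bot$ (handled by~\eqref{eq15}). The crux is simply the elementary linear-algebra fact that distinct kernels are equivalent to linear independence for a family of functionals of codimension-one kernels, together with the collapse $\partial\|x_i\|=\{f_i\}$ at smooth points that turns the union in~\eqref{eq36} into a single hyperplane.
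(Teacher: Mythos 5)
Your proof is correct and follows essentially the same route as the paper's: both invoke Corollary~\ref{lem:lin-indep-smooth} to obtain $n$ smooth points with linearly independent supporting functionals, use smoothness to get $x_i^\bot=\Ker f_i$, and conclude that the $n$ kernels (hence the $n$ outgoing neighborhoods) are pairwise distinct. Your explicit justification that distinct linearly independent functionals cannot share a kernel, and your remark on passing to projective vertices, merely spell out steps the paper leaves implicit.
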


\begin{proof}
    By Corollary~\ref{lem:lin-indep-smooth}, there exist  $n=\dim\cX$ smooth points $x_1,\dots,x_n\in\cX$ whose supporting functionals $f_1,\dots,f_n$ form a basis of the dual space $\cX^\ast$, and therefore, their kernels are pairwise distinct. Since each of the points $x_i$ is smooth, we have $x_i^\bot=\Ker f_i$. Hence, if $[x_i]$ denotes the corresponding projective point, then $[x_1]^\bot,\dots,[x_n]^\bot$ are also pairwise distinct.
\end{proof}

\begin{cor}\label{cor:dim}
  If $\cX$ contains  infinitely many pairwise BJ-orthogonal  vectors, then $\dim\cX=\infty$.
  Otherwise, $\dim\cX$ equals the cardinality  of the smallest subset $\Omega\subseteq\cX$ such that $x\perp z$ for every $x\in \Omega$ implies $z=0$, that is,
  $$\dim\cX=\min_{\substack{\Omega\subseteq\cX \\ \bigcap \limits_{x\in \Omega} x^\bot=\{0\}}}|\Omega|.$$
\end{cor}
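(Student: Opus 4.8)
The plan is to read the defining condition ``$x\perp z$ for every $x\in\Omega$ implies $z=0$'' simply as the equality $\bigcap_{x\in\Omega}x^\bot=\{0\}$, and to exploit throughout the representation $x^\bot=\bigcup_{f\in\partial\|x\|}\Ker f$ recorded in~\eqref{eq36}. Two things then need to be established: first, that an infinite pairwise BJ-orthogonal family forces $\dim\cX=\infty$; and second, that in the complementary case the displayed minimum is attained by a finite set and equals $\dim\cX$.

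For the first statement I would normalize. By homogeneity of BJ-orthogonality we may assume every member of the family is a unit vector, and since no vector is BJ-orthogonal to a nonzero scalar multiple of itself, the members lie in pairwise distinct one-dimensional subspaces; hence normalizing still leaves infinitely many \emph{distinct} unit vectors. Now if $x\perp y$ with $\|x\|=\|y\|=1$, then taking $\lambda=-1$ in the defining inequality gives $\|x-y\|\ge\|x\|=1$, so a pairwise BJ-orthogonal family of unit vectors is $1$-separated. As the unit sphere of a finite-dimensional space is compact, a $1$-separated subset of it must be finite, so an infinite such family can live only in an infinite-dimensional space. (This is precisely the easy half of the clique-number dichotomy of Theorem~\ref{thm31}.) To guarantee that the ``otherwise'' case is genuinely finite-dimensional I would argue the converse by constructing an infinite BJ-orthogonal sequence whenever $\dim\cX=\infty$: given unit vectors $x_1,\dots,x_k$, a best approximation to any $z\notin M:=\mathrm{span}(x_1,\dots,x_k)$ from the finite-dimensional subspace $M$ exists, and subtracting it and renormalizing produces a unit vector $x_{k+1}$ with $x_{k+1}\perp M$, hence $x_{k+1}\perp x_i$ for all $i\le k$; iterating yields an infinite pairwise BJ-orthogonal family.

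For the formula assume $n:=\dim\cX<\infty$. The bound $\le n$ is immediate from Corollary~\ref{lem:lin-indep-smooth}, which supplies smooth points $x_1,\dots,x_n$ whose supporting functionals $f_1,\dots,f_n$ are linearly independent; smoothness together with~\eqref{eq36} gives $x_i^\bot=\Ker f_i$, and since $f_1,\dots,f_n$ span $\cX^*$ their kernels meet in $\{0\}$, so $\Omega=\{x_1,\dots,x_n\}$ is admissible. For the bound $\ge n$, take any admissible $\Omega$ and pick one $f_x\in\partial\|x\|$ for each $x\in\Omega$; because $\Ker f_x\subseteq x^\bot$, the common kernel $\bigcap_{x\in\Omega}\Ker f_x$ is contained in $\bigcap_{x\in\Omega}x^\bot=\{0\}$, which forces $\{f_x\}_{x\in\Omega}$ to span the $n$-dimensional space $\cX^*$ and hence $|\Omega|\ge n$. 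Combining the two bounds gives the minimum equal to $n=\dim\cX$.

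The linear algebra of the last paragraph is routine and is not where the difficulty lies. The main obstacle is the bookkeeping that lets the minimum over \emph{all} subsets $\Omega$ (including infinite ones) coincide with the finite value $n$: once $\dim\cX<\infty$ is secured in the ``otherwise'' case this is automatic, since any infinite $\Omega$ has cardinality exceeding $n$ and so cannot compete with the admissible set of size $n$ produced above. Securing that finiteness is exactly the orthogonal-sequence construction of the second paragraph (equivalently, the clique-number equivalence of Theorem~\ref{thm31}), which I expect to be the one genuinely non-formal ingredient of the argument.
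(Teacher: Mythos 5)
Your handling of the finite-dimensional formula coincides with the paper's proof: the upper bound comes from Corollary~\ref{lem:lin-indep-smooth} exactly as in the paper, and your lower bound (one supporting functional $f_x$ per $x\in\Omega$, $\bigcap_{x\in\Omega}\Ker f_x\subseteq\bigcap_{x\in\Omega}x^\bot=\{0\}$ forces the $f_x$ to span $\cX^*$, so $|\Omega|\ge n$) is the same argument the paper phrases as subadditivity of codimension. Your $1$-separation argument for ``infinite pairwise BJ-orthogonal family $\Rightarrow$ $\dim\cX=\infty$'' is also correct, and is more self-contained than the paper, which delegates the entire dichotomy to \cite{AGKR-symmetrized}.

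The gap is in the converse half of the dichotomy, which you rightly single out as the one genuinely non-formal ingredient but then do not actually deliver. ``Pairwise BJ-orthogonal'' here must be read as mutual orthogonality: this is how the clique number of $\Gamma_0(\cX)$ in Theorem~\ref{thm31} is meant (a clique in the digraph requires edges in both directions), and it is precisely the setting of the symmetrized-orthogonality paper \cite{AGKR-symmetrized} that the authors cite for this step. Your best-approximation construction gives $x_{k+1}\perp\mathrm{span}(x_1,\dots,x_k)$ and hence only $x_j\perp x_i$ for $j>i$; since BJ-orthogonality is not symmetric, nothing in the construction yields $x_i\perp x_j$ for $i<j$, so the resulting family need not be pairwise orthogonal in the required sense. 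The existence of an infinite family of \emph{mutually} BJ-orthogonal vectors in every infinite-dimensional normed space is a genuinely non-trivial theorem (it is \cite[Theorem~3.5]{AGKR-symmetrized}, which the paper invokes rather than reproves), and it cannot be obtained by symmetrizing your Gram--Schmidt-style step. To close the argument you must either cite that result or replace the construction by one producing orthogonality in both directions; as written, the ``otherwise'' case of the corollary is not secured.
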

\begin{proof}
  The infinite-dimensional case is covered in \cite[Corollary 3.1 and Theorem 3.5]{AGKR-symmetrized}.
  Assume next $n:=\dim\cX<
\infty$. By Corollary~\ref{lem:lin-indep-smooth}, there exist $n$ smooth points $x_1, \dots, x_n\in\cX$ such that their (unique) supporting functionals $f_{x_1},\dots, f_{x_n}$ are linearly independent. Hence, $$\bigcap_{i=1}^n x_i^\bot =\bigcap_{i=1}^n \Ker f_{x_i} = \{0\}.$$ By letting $\Omega=\{x_1,\dots,x_n\}$ we see that $$
\min_{\substack{\Omega\subseteq\cX \\ \bigcap \limits_{x\in \Omega} x^\bot=\{0\}}}|\Omega|
\leq n.$$
Conversely,  consider any $\Omega=\{x_1,\dots, x_k\}\subseteq \cX$ with $|\Omega|=k<n$, and for each $x_i$ let  $f_{x_i}$ be one of its supporting functionals. Then $\Ker f_{x_i}\subseteq x_i^\bot$ for all $1\leq i\leq k$. Since $k<n$ we have, by subadditivity of codimension on intersections of spaces, $\{0\}\neq \bigcap_{i=1}^{k} \Ker f_{x_i}\subseteq \bigcap_{i=1}^{k}x_i^\bot$,  which proves that
$$\min_{\substack{\Omega\subseteq\cX \\ \bigcap \limits_{x\in \Omega} x^\bot=\{0\}}}|\Omega|\geq n,$$
and  the result follows.
\end{proof}

\begin{proof}[Proof of Theorem~\ref{thm31}] Notice that $z\in\bigcap_{x\in\Omega} x^\bot$ implies $\FF z\subseteq \bigcap_{x\in\Omega} x^\bot$, so this  intersection is either infinite or else it has exactly one element.  Hence, \eqref{eq:dim} follows directly from  Corollary~\ref{cor:dim}.

If one replaces digraph $\Gamma_0(\cX)$ by its projective counterpart  $\Gamma(\cX)$,  then the intersection  $\bigcap_{[x]\in\Omega}[x]^\bot$ is either empty or else it contains at least one  point in projective space. The former case is equivalent to $\bigcap_{x\in\Omega_0} x^\bot=\{0\}$, where $\Omega_0\subseteq\cX$ is the  collection of vectors obtained by taking a single representative $x$ from each projective point $[x]\in\Omega$  (thus, $|\Omega_0|=|\Omega|$). This validates  the claim in  Remark~\ref{remark1}.

As for the second part, let  $\cX$ be a smooth space with $\dim \cX=n$ and let $\dim(\mathrm{span}(S))=k$. Recall~\cite[Theorem 4.2]{James1} that, in smooth spaces, BJ-orthogonality is right additive (note that~\cite{James1} deals with real normed spaces only but the proof of \cite[Theorem 4.2]{James1} for complex normed spaces follows along the same lines). Hence $x\rightarrow S$ is equivalent to $x\rightarrow (\Span S)$ as well as to $x^\bot\supseteq\Span S $. There exist exactly $n-k$ linearly independent functionals $f_1,\dots ,f_{n-k}$ which annihilate $\Span S$. Without loss of generality assume $\|f_i\|=1$ and let $x_1,\dots, x_{n-k}$ be the normalized vectors such that $f_i(x_i)=1$ for all $1\leq i\leq n-k$.
Hence, $f_i$ is a supporting functional for $x_i$ so, being in a smooth space, we have $x_i^\bot=\Ker f_i$. It follows that $\bigcap\limits_{i=1}^{n-k} x_i^\bot=\bigcap\limits_{i=1}^{n-k}\Ker f_i=\Span S$. This gives
$$\Span(S)= \bigcap\limits_{i=1}^{n-k} x_i^\bot\supseteq \bigcap_{\substack{x\in\Gamma_0(\cX)\\ x\rightarrow S}} x^\bot\supseteq \Span(S),$$
and hence $\bigcap\limits_{\substack{x\in\Gamma_0(\cX)\\ x\rightarrow S}} x^\bot=\mathrm{span}(S).$ It also implies that $$\min\bigg\{|\Omega|;\; \Omega\subseteq\Gamma_0(\cX)\text{ with }\bigcap_{\substack{x\in\Omega\\ x\rightarrow S}} x^\bot=\mathrm{span}(S)\bigg\}\leq n-k.$$ Similarly to the proof of Corollary~\ref{cor:dim}, we get the reversed inequality, and the result follows.
\end{proof}
We now show how, on a projective space,  BJ-orthogonality  alone knows which lines are smooth and which are not. This will be done separately for two-dimensional spaces and for spaces of dimension at least three.  Recall from Remark~\ref{remark1}
that the dimension can be computed with the help of BJ-orthogonality alone. For simplicity,  we will typically denote the points in $\Gamma(\cX)$ simply by $x,y,\dots$ instead of $[x],[y],\dots$.

\begin{lem} \label{lemma:two-dimensional-projective}
  Let $\cX$ be a two-dimensional real or complex normed space. Then the following conditions are equivalent:
  \begin{itemize}
    \item[(i)] A vertex $x\in\Gamma( \cX)$ corresponds to  a smooth point in $\PP\cX$.
    \item[(ii)] $|x^\bot|=1$.
  \end{itemize}
\end{lem}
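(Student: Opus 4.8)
The plan is to characterize smoothness of $x$ in a two-dimensional space by the size of its outgoing neighborhood $x^\bot$ in the projective orthodigraph. The key tool is the identity \eqref{eq36}, namely $x^\bot=\bigcup_{f\in\partial\|x\|}\Ker f$, which translates the orthogonality structure into the language of supporting functionals. Since $\dim\cX=2$, each nonzero functional $f$ has a one-dimensional kernel, so $\Ker f$ corresponds to exactly one projective point $[y]\in\PP\cX$. Therefore the cardinality $|x^\bot|$ in $\Gamma(\cX)$ counts the number of distinct one-dimensional subspaces arising as kernels of the supporting functionals at $x$.

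First I would prove the implication (i)$\Rightarrow$(ii). If $x$ is smooth, then by definition it has a unique supporting functional $f$, so $\partial\|x\|=\{f\}$ (up to the scalar normalization that does not affect the kernel), and hence $x^\bot=\Ker f$ is a single projective point, giving $|x^\bot|=1$. This direction is essentially immediate once one recalls that the supporting functional being unique collapses the union in \eqref{eq36} to a single kernel.

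Next I would establish the contrapositive of (ii)$\Rightarrow$(i): if $x$ is \emph{not} smooth, I must show $|x^\bot|\geq 2$. A non-smooth point $x$ admits at least two distinct normalized supporting functionals $f$ and $g$. The crucial step is to argue that these two functionals have \emph{distinct kernels}. In dimension two this is the heart of the matter: two distinct one-dimensional subspaces (the two kernels) would yield two distinct projective points in $x^\bot$, and I must rule out the degenerate possibility that $\Ker f=\Ker g$. But in a two-dimensional space, if $\Ker f=\Ker g$ then $f$ and $g$ are proportional over $\FF$; combined with $f(x)=g(x)=\|x\|$ and equal norms, this forces $f=g$ (possibly after accounting for the complex case, where one uses that both are normalized and agree in value at $x$, so the proportionality constant must be $1$). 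This contradicts the assumption that they are distinct supporting functionals. Hence $\Ker f\neq\Ker g$, producing at least two elements in $x^\bot$.

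The main obstacle I anticipate is the complex case in this last step: over $\CC$, proportionality $g=\lambda f$ with $|\lambda|=1$ does not immediately force $\lambda=1$ from the norm equality alone, so I would need to invoke the value condition $f(x)=g(x)=\|x\|>0$ (a positive real) to conclude $\lambda=1$, thereby collapsing the two functionals into one and deriving the contradiction. The low dimension is genuinely essential here, since in higher dimensions distinct functionals can share a kernel is impossible, yet the converse subtlety (multiple distinct kernels even among non-proportional supporting functionals) is exactly why the general-dimensional case is treated separately in Lemma~\ref{lemma:high-dimensional-projective}. Once the distinctness of kernels is secured, the equivalence follows cleanly from \eqref{eq36}.
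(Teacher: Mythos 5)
Your proposal is correct and follows essentially the same route as the paper: both rely on \eqref{eq36}, the fact that in dimension two each nonzero functional annihilates exactly one projective point, and the observation that two distinct supporting functionals at $x$ cannot be proportional (since they agree with the positive value $\|x\|$ at $x$), hence have distinct kernels. The paper compresses this into the remark that distinct supporting functionals are linearly independent, which is exactly the step you spell out, including the unimodular-scalar subtlety in the complex case.
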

\begin{proof}
Recall that $x^\bot$ coincides with a collection of all projective points that are annihilated by supporting functionals of some representative~$\widehat{x}\in x$. By  definition, two different supporting functionals of  $\widehat{x}$ are linearly independent, so they have different kernels, and, in two-dimensional spaces,
each nonzero functional annihilates a unique projective point.
Hence $|x^\bot|$ is  equal to the cardinality of different supporting functionals of a representative $
\widehat{x}\in x$. Therefore, the result follows because $x$   is nonsmooth  if and only if its representative has at least two supporting functionals.
\end{proof}

\begin{lem} \label{lemma:high-dimensional-projective}
Let $\cX$ be a finite-dimensional normed  space over $\FF$ with {$n = \dim\cX\ge 3$}. Then the following conditions are equivalent:
\begin{itemize}
  \item[(i)] A vertex $x\in\Gamma( \cX)$ corresponds to  a smooth point in $\PP\cX$.
  \item[(ii)] There exist  $(n-2)$ vertices $x_3,\dots,x_{n}\in\Gamma(\cX)$ such that
    $$
    |x^\bot\cap x_3^\bot\cap\dots\cap x_{n}^\bot|=1.
    $$
  Moreover, for every reordering of this $(n-2)$-tuple of vertices we can define $$\Omega:=\{y\in\Gamma(\cX);\;\;
  |x^\bot\cap y^\bot\cap x_4^\bot\cap \dots\cap x_{n}^\bot|=1\},$$ and then $x_4^\bot\cap\dots\cap x_n^\bot\cap x^\bot\cap\bigcap\limits_{y\in\Omega}y^\bot=\varnothing$.
\end{itemize}
\end{lem}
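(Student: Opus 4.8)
The plan is to recast both smoothness and the ``moreover'' clause into transparent geometric statements, prove the forward implication by a dimension count built from exposed functionals, and then attack the converse by contraposition, where the genuine difficulty lies.

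\emph{Reformulation.} By \eqref{eq36}, for a representative $\widehat x$ we have $x^\bot=\bigcup_{f\in\partial\|\widehat x\|}\PP(\Ker f)$, and $x$ is smooth exactly when $x^\bot$ is a single projective hyperplane: if $\widehat x$ admitted two linearly independent supporting functionals $f_1,f_2$, then $\PP(\Ker f_1)\cup\PP(\Ker f_2)\subseteq x^\bot$ would strictly exceed either projective hyperplane. So the lemma is about detecting, from the digraph alone, whether $x^\bot$ is one hyperplane. I also simplify the ``moreover'' clause. Fix the tuple, reorder it so that index $3$ is the one replaced by $y$, and set $V:=x^\bot\cap x_4^\bot\cap\dots\cap x_n^\bot$. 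For $y\in\Omega$ the set $V\cap y^\bot$ is by definition a single point $p_y$, and $x_3\in\Omega$ with $p_{x_3}=p_0$, the point furnished by the first condition. Hence $x_4^\bot\cap\dots\cap x_n^\bot\cap x^\bot\cap\bigcap_{y\in\Omega}y^\bot=\bigcap_{y\in\Omega}(V\cap y^\bot)=\bigcap_{y\in\Omega}\{p_y\}$, which is empty precisely when the single-cut points $\{p_y:y\in\Omega\}$ are not all equal. Thus, for each reordering, the ``moreover'' clause says exactly that $y\mapsto p_y$ is non-constant on $\Omega$.

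\emph{(i)$\Rightarrow$(ii).} Let $f$ be the unique supporting functional of the smooth point $x$, so $x^\bot=\PP(\Ker f)$. By Lemma~\ref{exposed-smooth}, $f\in\Exp(\cX^\ast)$, and since $\Span\Exp(\cX^\ast)=\cX^\ast$ I can extend $\{f\}$ to a linearly independent family $f,f_3,\dots,f_n$ drawn from $\Exp(\cX^\ast)$. Again by Lemma~\ref{exposed-smooth} these are supporting functionals of smooth points $x_3,\dots,x_n$, so $x_i^\bot=\PP(\Ker f_i)$ and $x^\bot\cap x_3^\bot\cap\dots\cap x_n^\bot=\PP\big(\Ker f\cap\bigcap_i\Ker f_i\big)$ is a single point, giving the first condition. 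For the reordering singling out index $3$, the space $W:=\Ker f\cap\Ker f_4\cap\dots\cap\Ker f_n$ is two-dimensional, so $V=\PP(W)$ is a projective line; because the restriction map $\cX^\ast\to W^\ast$ is onto, the restrictions of the elements of $\Exp(\cX^\ast)$ span $W^\ast$, hence two of them have distinct kernels in $W$. The corresponding smooth points lie in $\Omega$ and cut $V$ at two distinct points, so $y\mapsto p_y$ is non-constant. Every reordering is handled identically.

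\emph{(ii)$\Rightarrow$(i) and the main obstacle.} I argue by contraposition: assume $x$ is non-smooth and show (ii) fails. Pick linearly independent supporting functionals $f_1,f_2$ of $\widehat x$; then $x^\bot$ contains the whole fan $\bigcup_{t\in[0,1]}\PP\big(\Ker((1-t)f_1+tf_2)\big)$ of hyperplanes through the axis $\PP(N)$, $N:=\Ker f_1\cap\Ker f_2$, so around $\PP(N)$ the set $x^\bot$ carries a genuinely two-dimensional piece. If the first condition fails there is nothing to prove, so suppose $x^\bot\cap\bigcap_{i=3}^n x_i^\bot=\{p_0\}$. The plan is to exhibit a reordering for which every $y\in\Omega$ cuts $V$ at the same point $p_0$, contradicting the reformulated ``moreover'' clause. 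The mechanism is a \emph{pinching} phenomenon: off the axis $V$ still contains a small piece of a hyperplane and cannot be reduced to a single point by one set $y^\bot$, while a single-point cut is possible only at the axis point, forcing $p_y=p_0$. Making this rigorous is the crux. The two delicate issues are, first, the purely geometric claim that the fan makes $V$ locally two-dimensional at every point off its pinch, which requires controlling how the (possibly themselves non-smooth, hence multi-hyperplane) sets $x_i^\bot$ meet the fan; and second, the degenerate case $N=\{0\}$, where $\partial\|\widehat x\|$ spans $\cX^\ast$ (as at a cube vertex) and there is no axis to pinch to. In that regime one instead shows that $x^\bot$ is so large that $x^\bot\cap\bigcap_{i=3}^n x_i^\bot$ can never be a single point, so the first condition already fails. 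Separating these regimes by the dimension of the face $\partial\|\widehat x\|$, and proving the local two-dimensionality in the fan regime, is where essentially all the difficulty of the lemma is concentrated.
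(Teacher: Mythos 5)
Your reformulation of the ``moreover'' clause and your argument for (i)$\Rightarrow$(ii) are correct and essentially coincide with the paper's: where the paper takes two smooth points $x_2,x_3$ from a basis of exposed functionals and checks they lie in $\Omega$ with distinct cut points, you take two exposed functionals with independent restrictions to the two-dimensional space $W$ --- same mechanism. The problem is the converse: what you offer there is a plan, not a proof. You yourself write that ``making this rigorous is the crux'' and that the local two-dimensionality of $V$ off the pinch, plus the degenerate regime, ``is where essentially all the difficulty of the lemma is concentrated'' --- and then you stop. As submitted, (ii)$\Rightarrow$(i) is unproven. Worse, the route you sketch is substantially harder than necessary: the $x_i$ need not be smooth, so $V=x^\bot\cap\bigcap_i x_i^\bot$ is an intersection of (possibly infinite) unions of hyperplanes, and controlling its local dimension near a point of the fan is genuinely delicate; the second regime you invoke (where the subdifferential has no common kernel to ``pinch to'') is also left unaddressed, and note that $N=\Ker f_1\cap\Ker f_2$ for two independent functionals is never $\{0\}$ when $n\ge 3$, so it is not even clear what case split you intend.

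The paper's converse is a short linear-algebra argument with no geometry at all. Fix independent supporting functionals $f_1,f_2$ of $\widehat x$ and, for each $x_i$, an \emph{arbitrary} supporting functional $f_i$. The hypothesis $|x^\bot\cap x_3^\bot\cap\dots\cap x_n^\bot|=1$ forces $\{f_1,f_3,\dots,f_n\}$ and $\{f_2,f_3,\dots,f_n\}$ to be linearly independent while the $n$-element set $\{f_1,f_2,f_3,\dots,f_n\}$ is dependent; after reordering, $\{f_1,f_2,f_4,\dots,f_n\}$ is independent and contains $f_3$ in its span. Applying the same dependence argument to any $y\in\Omega$ and any supporting functional $f$ of $y$ gives $f\in\Span\{f_1,f_2,f_4,\dots,f_n\}$, hence $\Ker f\supseteq\bigcap_{i\neq 3}\Ker f_i$, so the single projective point $z$ on that line lies in $x^\bot\cap x_4^\bot\cap\dots\cap x_n^\bot\cap\bigcap_{y\in\Omega}y^\bot$. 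In your language this says $p_y=z$ for every $y\in\Omega$: exactly the pinching you wanted, obtained with no local-dimension analysis. You should replace your sketch with this (or an equivalent) argument.
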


\begin{proof}

 (i) $\Rightarrow$ (ii).
By Corollary~\ref{lem:lin-indep-smooth}, there exist $n$ smooth points $\widehat{x}_1,\dots,\widehat{x}_n\in\cX$
whose supporting functionals $f_{1},\dots,f_{n}$ form a basis for the dual space; let $x_i:=\FF \widehat{x}_i\in\PP\cX$ be the corresponding projective points. Then
\begin{equation}\label{eq:x^bot-vs-Ker(f_x)}
    x_i^\bot=\{z\in\PP\cX;\;z\subseteq\Ker f_{i}\}.
\end{equation}
 Let  $x\in\PP\cX$ be a smooth point and $f_x$ be a unique supporting functional of some representative $\widehat{x}\in x$. After reordering $f_{1},\dots,f_{n}$ we can assume that
 $$f_x,f_{2},\dots,f_{n}$$ form a basis for the dual space $\cX^\ast$. Choose now any reordering of $\widehat{x}_3,\dots,\widehat{x}_n$; for simplicity we consider only the identical reordering.
 Then
 $$\dim\bigg(\Ker f_x\cap \Ker f\cap\bigcap_{i=4}^n\Ker f_{i} \bigg)=1,\qquad f\in\{f_{2},f_{3}\};$$
and since  $f_x$, $f_{i}$ are the only supporting functionals at the smooth points $\widehat{x}$ and $\widehat{x}_i$, respectively, we have, by \eqref{eq:x^bot-vs-Ker(f_x)}:
$$
   \biggl|x^\bot\cap w^\bot\cap \bigcap_{i=4}^n x_i^\bot \biggr|=1,\qquad w\in\{x_2,x_3\}.
$$
As such, $x_2,x_3\in\Omega$. Since $f_x,f_{2},\dots,f_{n}$ form a basis in $\cX^*$, we clearly have
\begin{gather*}
    \bigcap_{i=4}^n\Ker f_{i} \cap\Ker f_x\cap (\Ker f_{2}\cap\Ker f_{3}) =0, \quad \text{so}\\
    x_4^\bot\cap\dots\cap x_n^\bot \cap x^\bot\cap\bigcap_{y\in\Omega} y^\bot\subseteq x_4^\bot\cap\dots\cap x_n^\bot \cap x^\bot\cap x_2^\bot\cap x_3^\bot=\varnothing,
\end{gather*}
as claimed.

$\neg$ (i) $\Rightarrow$ $\neg$ (ii).  Let $x$ be nonsmooth. Then its representative $\widehat{x}\in x$  has at least two $\FF$-linearly independent supporting functionals $f_1,f_2$. Consider now any set of $(n-2)$ projective points $x_3,\dots,x_{n}\in\Gamma(\cX)$ which satisfies
\begin{equation} \label{equation:one-intersection}
    |x^\bot\cap x_3^\bot\cap\dots\cap x_{n}^\bot|=1;
\end{equation}
if no such $n-2$ tuple exists, then there is nothing to do.

Let $f_i$ be an arbitrary supporting functional at a representative $\widehat{x}_i\in x_i$, $i = 3, \dots, n$. Then $f_1, f_3, \dots, f_n$ are linearly independent, since otherwise $\Ker f_1 \cap \Ker f_3 \cap \dots \cap \Ker f_n$ would be at least two-dimensional subspace  with every line inside it  corresponding to a  projective point contained in $x^\bot\cap x_3^\bot\cap\dots\cap x_{n}^\bot$, contradicting \eqref{equation:one-intersection}. Similarly, $f_2, f_3, \dots, f_n$ are linearly independent. However, $f_1, f_2, f_3, \dots, f_n$ are linearly dependent, since otherwise $\Ker f_1 \cap \Ker f_3 \cap \dots \cap \Ker f_n$ and $\Ker f_2 \cap \Ker f_3 \cap \dots \cap \Ker f_n$ would correspond to two distinct projective points contained in $x^\bot\cap x_3^\bot\cap\dots\cap x_{n}^\bot$, again contradicting \eqref{equation:one-intersection}. Since $f_1$ and $f_2$ are linearly independent, without loss of generality we may assume that $f_3$ can be expressed through a linearly independent set $f_1, f_2, f_4, \dots, f_n$.

Consider now an arbitrary $y \in \Omega$, i.e.,
$$
  |x^\bot\cap y^\bot\cap x_4^\bot\cap\dots\cap x_n^\bot|=1,
$$
and an arbitrary supporting functional $f$ at some representative of $y$. Arguing as  before, the functionals $f_1, f_2, f, f_4, \dots, f_n$ are  linearly dependent, so $f$ can be expressed as a linear combination of $f_1, f_2, f_4, \dots, f_n$. Hence, there exists a projective point $z\in\PP\cX$ with
$$
z\subseteq\bigcap_{i \neq 3} \Ker f_i = \Ker f \cap \bigcap_{i \neq 3} \Ker f_i $$
and every line in the latter intersection clearly  corresponds to a point inside $ x^\bot\cap y^\bot\cap x_4^\bot\cap\dots\cap x_n^\bot$ (by \eqref{eq:x^bot-vs-Ker(f_x)}).
Thus, for any $y \in \Omega$,
$$z\in x^\bot\cap x_4^\bot\cap\dots\cap x_n^\bot\cap\bigcap_{y\in\Omega} y^\bot, $$
as claimed.
\end{proof}

Note that \eqref{equation:one-intersection} in the proof above is possible even with a nonsmooth point~$x$. Say, if $\partial\|x\|=\{(1,\lambda,0);\;\;0\le\lambda\le 1\}$, and if $x_1$ is a smooth point with $\partial\|x_1\|:=(1,2,0)$, then
$x^\bot\cap x_1^\bot$  contains a single line $[e_3]$. Moreover, it is also possible that in \eqref{equation:one-intersection} not all $x_i$ are smooth. Say, if $\partial\|x\|=\{(1,\lambda,0,0);\;\;0\le\lambda\le1\}$ and $\partial\|x_1\|=\{(0,\lambda,1,0);\;\;0\le\lambda\le 1\}$ and $\partial\|x_2\|=(0,1,0,0)$. Then $x^\bot\cap x_1^\bot\cap x_2^\bot$ contains just $[(0,0,0,1)]$.

\begin{lem}\label{thm5} Let $\cX$ be a normed space over $\FF$ and let $\{x_j;\; j\in\mathcal J\}$ be a  collection of projective points in  $\PP\cX$. Then the following conditions are equivalent.
\begin{itemize}
\item[(i)] There exists a projective point $z\in\PP\cX$ such that $z^\bot\subseteq\bigcap_{j\in\mathcal J}x_j^\bot$.
\item[(ii)] There exist representatives $x_j'$ of $x_j$ such that the convex hull of $\{x_j';\;j\in\mathcal J\}$ lies in a common face of the norm's  unit sphere.
\end{itemize}
\end{lem}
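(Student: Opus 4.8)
The plan is to reduce both (i) and (ii) to the existence of a single norm-one functional $g\in\cX^*$ that supports a representative of every $x_j$ simultaneously. Throughout I work with normalized representatives and lean on \eqref{eq36}: for a unit vector $x'$ one has $x^\bot=\bigcup_{f\in\partial\|x'\|}\Ker f$, so the inclusion $\Ker g\subseteq x^\bot$ is immediate whenever $g$ is a supporting functional of $x'$. The technical heart is the converse, which I would isolate as a \emph{covering lemma}: if $\|g\|=1$ and the whole hyperplane $\Ker g$ is contained in $x^\bot$ for some unit representative $x'$, then, up to a unimodular scalar, $g$ is a supporting functional of $x'$, i.e. $g\in\partial\|\alpha x'\|$ for some $|\alpha|=1$.

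For the covering lemma I would argue by separation. Set $K:=\partial\|x'\|$, a weak-$*$ compact convex subset of the dual sphere with $0\notin K$, and let $K^{*}=\{w:\mathrm{Re}\,h(w)\ge 0\text{ for all }h\in K\}$ be its dual cone. If $\Ker g$ met the interior of $K^{*}$ at some $w$, then $\mathrm{Re}\,h(w)>0$ for all $h\in K$, so $0\notin\{h(w):h\in K\}$ and no supporting functional of $x'$ annihilates $w$; by \eqref{eq36} this gives $w\notin x^\bot$, contradicting $\Ker g\subseteq x^\bot$. Hence the open convex cone $\mathrm{int}(K^{*})$ is disjoint from the hyperplane $\Ker g$, so it lies in one open half-space determined by $g$; the bipolar theorem then forces $g\in\pm\,\mathrm{cone}(K)$, and normalization yields $g\in\pm K$, as claimed. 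In the complex case I would pass to $\cX_\RR$ by restriction of scalars and argue analogously.

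Granting the covering lemma, (i)$\Rightarrow$(ii) is quick. Pick a unit representative $\widehat z$ of $z$ and a supporting functional $g\in\partial\|\widehat z\|$; then $\Ker g\subseteq z^\bot\subseteq x_j^\bot$ for every $j$, so the covering lemma supplies, for each $j$, a representative $x_j'$ with $g(x_j')=1=\|g\|\,\|x_j'\|$. By Lemma~\ref{proposition:exposed-faces} the set $F_g=\{w\in\cX:\|w\|=1,\ g(w)=1\}$ is an (exposed) face of the unit ball containing all these $x_j'$, hence it contains their convex hull; this is exactly (ii). Conversely, for (ii)$\Rightarrow$(i) I would take $z$ in the relative interior of $\mathrm{conv}\{x_j'\}$. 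Since the convex hull lies on the unit sphere we have $\|z\|=1$, and for any $g\in\partial\|z\|$ and any index $j$ the relative-interior position of $z$ lets me write $z=(1-\mu)x_j'+\mu q$ with $q\in\mathrm{conv}\{x_j'\}$ and $0<\mu<1$; taking real parts in $1=g(z)=(1-\mu)g(x_j')+\mu g(q)$ forces $g(x_j')=1$, whence $g\in\partial\|x_j'\|$ and $\Ker g\subseteq x_j^\bot$. As this holds for every $g\in\partial\|z\|$, \eqref{eq36} gives $z^\bot=\bigcup_{g\in\partial\|z\|}\Ker g\subseteq\bigcap_{j}x_j^\bot$, which is (i).

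The main obstacle is the covering lemma: controlling when a single hyperplane can be swallowed by the (generally continuous) family of kernels $\{\Ker h:h\in\partial\|x'\|\}$. In finite dimensions the separation/bipolar argument above is clean, since $\mathrm{cone}(K)$ is then closed. In full generality two points need extra care and are where I expect the real work to lie: the weak-$*$ closedness of $\mathrm{cone}(K)$ required for the bipolar step, and, in the direction (ii)$\Rightarrow$(i) with an infinite index set or an infinite-dimensional $\cX$, the existence of a relative-interior (equivalently, a common supporting) point of $\mathrm{conv}\{x_j'\}$, which I would secure through a Hahn--Banach and compactness argument rather than a naive appeal to relative interior.
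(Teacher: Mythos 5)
Your proposal is correct and its overall architecture coincides with the paper's: both directions are reduced to a single functional supporting a representative of every $x_j$, with (i)$\Rightarrow$(ii) extracting that functional from $\partial\|\widehat z\|$ and (ii)$\Rightarrow$(i) obtained by taking $z$ in the relative interior of the convex hull and showing each of its supporting functionals also supports every $x_j'$ (your convex-combination computation is the same as the paper's $t\in[1-\epsilon,1+\epsilon]$ estimate). The one genuine divergence is your ``covering lemma.'' The paper gets this step for free from \cite[Theorem~2.1]{James1}, i.e.\ from the elementary identity $\mathrm{dist}(x',\Ker g)=|g(x')|/\|g\|$: the hypothesis $\Ker g\subseteq x^\bot$ says exactly that $\|x'+w\|\ge\|x'\|$ for all $w\in\Ker g$, so $\mathrm{dist}(x',\Ker g)=\|x'\|$ and hence $|g(x')|=1$, which is your conclusion in one line and in an arbitrary normed space. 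Your separation/bipolar route through the dual cone of $\partial\|x'\|$ does work in finite dimensions, but it imports exactly the difficulties you flag (weak-$*$ closedness of $\mathrm{cone}(\partial\|x'\|)$, nonemptiness of the interior of the dual cone) and, in the complex case, needs a small repair: $\Ker g$ has real codimension $2$, so the separating real hyperplane containing it is $\Ker\mathrm{Re}(\mu g)$ for some unimodular $\mu$, giving $\mu g\in\partial\|x'\|$ rather than $\pm g\in\mathrm{cone}(K)$ (consistent with your stated conclusion, but not with the inline ``$g\in\pm\,\mathrm{cone}(K)$''). Switching to the distance formula removes all of these caveats at once. The remaining shared caveat --- nonemptiness of the relative interior of $\mathrm{conv}\{x_j'\}$ for infinite $\mathcal J$ in infinite dimensions --- is present in the paper's proof as well (it cites Rockafellar's finite-dimensional theorems), so you are not worse off there.
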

\begin{proof}  (i) $\Longrightarrow$ (ii).
Let $\widehat{z}$ and $\widehat{x}_j$ be norm-one representatives of  $z$ and $x_j$, respectively. Clearly, $z^\bot\subseteq \bigcap_{j\in\mathcal J}x_j^\bot$ implies $\widehat{z}^\bot\subseteq \bigcap_{j\in\mathcal J}\widehat{x}_j^\bot$. Therefore, by \eqref{eq36}, every supporting functional $f$ of $\widehat{z}$ satisfies $\Ker f\subseteq \bigcap_{j\in\mathcal J}\widehat{x}_j^\bot$.

Now, fix a supporting functional $f$ of $\widehat{z}$. Then, using \cite[Theorem 2.1]{James1}, we get $|f(\widehat{x}_j)|=1$ for all $j\in\mathcal J$. It means there exists $\alpha_j$ with $|\alpha_j|=1$ such that $f(\widehat{x}_j) =\alpha_j$ for all $j\in\mathcal J$. Considering $x_j'=\widehat{x}_j/\alpha_j$, we get $f(x_j')=1$ for all $j\in\mathcal J$. Hence, all $x_j'$ belong to a common face
$$F=\{\widehat{x};\;\|\widehat{x}\|=1\text{ and } f(\widehat{x})=1\}.$$

 (ii) $\Longrightarrow$ (i). Let there exist representatives  $x_j'$ of $x_j$ such that the convex hull of $\{x_j';\; j\in\mathcal J\}$, say $C$, lies on a common face $F$ of the norm's closed unit ball. In particular, for any $\widehat{y}\in C$, we have $\|\widehat{y}\|=1$. By \cite[Theorem~6.2]{Rockafellar}, the relative interior of $C$ is nonempty. Let $\widehat{z}$ be in the relative interior of~$C$ and let $f\in\partial\|\widehat{z}\|$ be any of its supporting functionals. We claim that
 $$\Ker  f\subseteq \bigcap_{j\in\mathcal J}(x_j')^\bot.$$ To see this, fix an index $j\in\mathcal J$. Using \cite[Theorem 6.4]{Rockafellar}, we obtain that there exists $0<\epsilon<1$ such that $\|(1-t)x_j'+t\widehat{z}\|=1$ for all $t\in [0, 1+\epsilon]$. Then $$\big|f(x_j')(1-t)+t\big| = \big|f((1-t)x_j'+t\widehat{z})\big|\leq \|(1-t)x_j'+t\widehat{z}\|=1$$  for all $t\in[1-\epsilon,1+\epsilon]$. In particular, with $t=1\pm \epsilon$, we get $\big|1\pm\epsilon(f(x_j')-1)\big|\leq 1$, which gives $f(x_j') =1$.
So, $f$ is a supporting functional for $x_j'$ also. By \eqref{eq36}, \begin{equation*}\label{extraequation}\Ker f\subseteq (x_j')^\bot.\end{equation*} Since $f\in\partial\|z\|$ was an arbitrary, using \eqref{eq36} again, we get
\begin{equation}\label{eq78}
    z^\bot\subseteq \bigcap_{j\in\mathcal J}(x_j')^\bot\qedhere
\end{equation}
\end{proof}

\begin{rem} \label{substitute:lemma2.10}
   The proof of Lemma \ref{thm5} also shows the following three statements.

\begin{itemize}
\item[(i)] Let $\{x_j;\; j\in\mathcal J\}$ be a collection of projective points in $\PP\cX$. Then $\bigcap_{j\in\mathcal J}x_j^\bot$ contains $z^\bot$ for some $z\in \cX$ if and only if there exists representatives $x_j'$ of $x_j$ such that all the elements in the convex hull of $\{x_j';\;j\in\mathcal J\}$ are of norm one.
\item[(ii)] We have a stronger statement than (i) $\Longrightarrow$ (ii) in Lemma \ref{thm5}: If there exists a linear functional $f$ on $\mathcal X$ such that $\Ker f \subseteq \bigcap_{j\in\mathcal J}x_j^\perp$, then there exist representatives $x_j'$ of $x_j$ such that the convex hull of $\{x_j';\;j\in\mathcal J\}$ lies in a common face of the norm's  unit sphere.
\item[(iii)] If $C$ is a face of norm's closed unit ball and $z_1$ and $z_2$ are in the relative interior of $C$, then $z_1^\bot=z_2^\bot$ (since \eqref{eq78} holds for an arbitrary point $z$ in the relative interior of $C$ and since we may take $\{x_j; \; j \in \mathcal{J}\}=C$).
\end{itemize}\end{rem}

We recall that $\mathcal F\subseteq\PP(\cX)$ is a face (or an exposed face) if we can choose a representative for each $[x]\in\mathcal F$ such that $\{x;\; [x]\in\PP(\cX)\}$ is a face (or an exposed face, respectively) of $\cX$. As an immediate corollary of Lemma \ref{thm5}, we get the  characterization of maximal faces in a projective space in terms of the orthodigraph.
\begin{cor}\label{cor:maximalfaceclassify} A collection of vertices  $\mathcal F\subseteq\Gamma(\cX)$ forms  a maximal face of $\PP\cX$ if and only if the following conditions are satisfied:
\begin{itemize}
\item[(i)] $\bigcap_{x\in\mathcal F} x^\bot$ contains $z^\bot$ for some $z\in \Gamma(\cX)$,
\item[(ii)] $\mathcal F\cup\{y\}$ does not satisfy (i) for any $y\in\Gamma(\cX)\setminus \mathcal F$.
\end{itemize}
\end{cor}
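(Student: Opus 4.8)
The plan is to read the corollary off from Lemma~\ref{thm5} by reformulating condition~(i) as a statement about \emph{containment in faces}, and then to finish with a routine maximality argument in the poset of projective faces. First I would record the reformulation: by Lemma~\ref{thm5}, a collection $\mathcal G\subseteq\Gamma(\cX)$ satisfies (i), i.e. $\bigcap_{x\in\mathcal G}x^\bot$ contains $z^\bot$ for some $z$, if and only if one can choose representatives of the points of $\mathcal G$ whose convex hull lies in a common face of the norm's unit sphere. I would also note the two monotonicity facts that drive everything: if $\mathcal G\subseteq\mathcal G'$ and $\mathcal G'$ satisfies (i), then so does $\mathcal G$, because $\bigcap_{x\in\mathcal G'}x^\bot\subseteq\bigcap_{x\in\mathcal G}x^\bot$ still contains the same $z^\bot$; and if $\mathcal G$ is a projective face, then $\mathcal G$ satisfies (i), since one may choose representatives forming a face $F$ of the unit ball, whose convex hull lies in $F$, so Lemma~\ref{thm5} applies.

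The one genuine subtlety, and the step I expect to be the main obstacle, is bridging the gap between a collection whose representatives merely \emph{lie in} a common face $F$ and a collection that genuinely \emph{is} a projective face. The observation that closes this gap is that a face $F$ of the unit ball contains at most one representative of any given projective point: if $w$ and $\lambda w$ with $|\lambda|=1$, $\lambda\neq 1$, both lay on $F$, then their midpoint $\tfrac{1+\lambda}{2}w$ would lie on $F$ yet have norm $\tfrac{|1+\lambda|}{2}<1$, contradicting that $F$ sits on the unit sphere. Consequently the projectivization $[F]:=\{[w];\;w\in F\}$ is itself a projective face, with $F$ serving as the required set of representatives. Combining this upgrade with the reformulation yields the clean claim that a collection satisfies (i) if and only if it is contained in some projective face.

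With this claim in hand the corollary becomes a poset argument. For the forward direction, if $\mathcal F$ is a maximal face of $\PP\cX$, then it satisfies (i) by the monotonicity remark; and if $\mathcal F\cup\{y\}$ satisfied (i) for some $y\in\Gamma(\cX)\setminus\mathcal F$, it would be contained in a projective face $\mathcal H\supseteq\mathcal F\cup\{y\}\supsetneq\mathcal F$, contradicting maximality, so (ii) holds. For the converse, assume (i) and (ii): by (i) the set $\mathcal F$ lies in some projective face $\mathcal H$, and were $\mathcal H\supsetneq\mathcal F$, picking $y\in\mathcal H\setminus\mathcal F$ would make $\mathcal F\cup\{y\}\subseteq\mathcal H$ satisfy (i), contradicting (ii); hence $\mathcal F=\mathcal H$ is a face. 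Finally, if some projective face $\mathcal F'$ strictly contained $\mathcal F$, any $y\in\mathcal F'\setminus\mathcal F$ would again violate (ii), so $\mathcal F$ is maximal. I do not foresee any difficulty beyond the containment-in-a-face upgrade; everything else is the straightforward monotonicity bookkeeping recorded above.
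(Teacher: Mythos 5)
Your proposal is correct and follows the same route the paper intends: the paper states this corollary as an immediate consequence of Lemma~\ref{thm5} and writes out no proof, and your argument is exactly the standard derivation from that lemma. Your extra care in upgrading ``representatives lie in a common face'' to ``the collection is contained in a projective face'' (via the observation that a face of the unit sphere contains at most one unimodular multiple of any vector) correctly fills in the one detail the paper leaves implicit, and the remaining monotonicity and maximality bookkeeping is sound.
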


Finally, we provide proofs of our main theorems.
\begin{proof}[Proof of Theorem \ref{thm41}]
The proof that $
\dim\cX=\dim\cY$ follows from Theorem~\ref{thm31} and Remark~\ref{remark1}. The proof of the fact that an orthodigraph isomorphism $\Gamma(\cX)\rightarrow\Gamma(\cY)$ maps smooth points onto smooth points then follows from Lemma~\ref{lemma:two-dimensional-projective} and~\ref{lemma:high-dimensional-projective}.  Finally, the proof of the fact that an orthodigraph isomorphism $\Gamma(\cX)\rightarrow\Gamma(\cY)$ maps maximal faces onto maximal faces follows from Corollary~\ref{cor:maximalfaceclassify}.
\end{proof}

\begin{proof}[Proof of Theorem \ref{main3}]
  (i) $\Longrightarrow$ (ii).
A linear isometry  $A\colon\cX\to\FF^n$ is a diffeomorphism, so
it maps smooth points onto smooth points and induces a graph isomorphism between $\Gamma(\cX,\norm)$ and $\Gamma(\FF^n,\norm_{\infty})$. Therefore, it suffices to  prove that the equality in \eqref{89} of Corollary~\ref{inequality} is achieved  with the supremum norm $$\|(x_1,\dots,x_n)\|_{\infty}=\max\{|x_1|,\dots,|x_n|\}$$ on $\mathbb F^n$. Note that this norm is differentiable at a point $x = (x_1, \dots, x_n)$ if and only if exactly one of the components of $x$ has a maximal absolute value. Moreover, if exactly the $i$-th component of $x\in\FF^n$  has a maximal absolute value, then $\partial\|x\|_{\infty}=\{\mu e_i^\ast\}$  for some unimodular $\mu$, where $e_i$ is the  $i$-th standard basis vector and $e_i^*$ is a dual functional, i.e., $e_i^\ast(e_j)=\delta_{ij}$, the Kronecker delta.  Therefore, $$x^\bot=\Ker e_i^\ast $$ consists of all vectors with zero $i$-th component. As such, the outgoing neighborhood of a smooth projective point $[x]:=\FF x\in\Gamma(\cX)$ consists of all the lines in $\Ker e_i^\ast$ and thus in this case we obtain an equality in \eqref{89}.

(ii) $\Longrightarrow$ (iii). Without loss of generality, we may assume that $\cX=\mathbb F^n$ with $n=\dim\cX$. Now, the implication immediately follows from \eqref{89} and the fact that the equality can be achieved if the norm on $\cX$ is supremum norm.

(iii) $\Longrightarrow$ (i). If (iii) holds, then, by Lemma \ref{exposed-smooth}, there exist exactly $n$ different kernels corresponding to the exposed points in $\cX^\ast$. Therefore, if $x_1^\ast,\dots,x_n^\ast\in\cX^\ast$ are exposed points with pairwise different kernels, then every other exposed point in $\cX^\ast$ is a scalar multiple of one of them. As such, the set of exposed points in $\cX^\ast$ equals
   $$\TT x_1^\ast\cup\dots\cup\TT x_n^\ast$$ where $\TT=\{-1,1\}$ (if $\FF=\RR$) or $\TT$ is a unimodular group (if $\FF=\CC$). In both cases, the set of exposed points is compact and hence their convex span is already a closed subset. Therefore, in $\cX^\ast$ the extreme points coincide with exposed ones. By a suitably chosen linear bijection, we can assume that the extreme points in~$\cX^\ast$ are $\TT e_1^\ast,\dots,\TT e_n^\ast$. Clearly then, the norm on $\cX^\ast$ is a taxi-cab,   so~(i) holds by duality.
   \end{proof}

\section{BJ-norms and their properties} \label{section:BJ-norms}
Let us start by proving the following lemma, which  uses  the fact that $\Gamma$ is a quotient graph of $\Gamma_0\setminus\{0\}$. It
indicates  once again that $\Gamma$ is more restrictive than~$\Gamma_0$.
\begin{lem}\label{thm999}
Let $\cX$ and $\cY$ be normed spaces over $\FF$. If $\Gamma(\cX)$ is isomorphic to $\Gamma(\cY)$, then also $\Gamma_0(\cX)$ is isomorphic to $\Gamma_0(\cY)$.
\end{lem}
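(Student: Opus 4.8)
The plan is to reconstruct an explicit isomorphism $\Psi\colon\Gamma_0(\cX)\to\Gamma_0(\cY)$ by \emph{blowing up} a given orthodigraph isomorphism $\phi\colon\Gamma(\cX)\to\Gamma(\cY)$ along the fibers of the projection $\Gamma_0(\cX)\setminus\{0\}\to\Gamma(\cX)$, $x\mapsto[x]$, using that $\Gamma(\cX)$ is the quotient of $\Gamma_0(\cX)\setminus\{0\}$ under $\FF x=\FF y$. First I would record the three combinatorial facts that make this work. The vertex $0$ is the unique loop vertex of $\Gamma_0(\cX)$ and satisfies $0\to y$ and $x\to 0$ for all $x,y$; so it is distinguished and will simply be sent to $0_\cY$. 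For nonzero vectors, homogeneity of BJ-orthogonality gives $x\perp y\iff\lambda x\perp\mu y$ for nonzero $\lambda,\mu$, so the presence of an edge $x\to y$ between distinct fibers depends only on the projective classes $[x],[y]$; and within a single fiber $\FF x\setminus\{0\}$ there are no edges at all, since $x\perp x$ forces $x=0$. Thus, apart from the vertex $0$, the whole edge structure of $\Gamma_0(\cX)$ is encoded by $\Gamma(\cX)$ together with the fact that each projective vertex is replaced by an edge-free fiber.

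Next I would match the fibers. For fixed nonzero $x$ the map $\lambda\mapsto\lambda x$ is a bijection $\FF\setminus\{0\}\to\FF x\setminus\{0\}$, so every fiber in both $\Gamma_0(\cX)$ and $\Gamma_0(\cY)$ has cardinality $|\FF\setminus\{0\}|$; since $\cX$ and $\cY$ are over the same field this cardinality is common to all fibers. (If $\cX=\{0\}$, then $\Gamma(\cX)=\varnothing$ forces $\cY=\{0\}$ as well, and both graphs $\Gamma_0$ reduce to a single loop vertex, so the statement is trivial; hence I assume $\cX,\cY\neq\{0\}$.) For each projective point $p\in\Gamma(\cX)$ I would choose an arbitrary bijection $\psi_p$ from the fiber over $p$ onto the fiber over $\phi(p)$ in $\Gamma_0(\cY)$.

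Finally I would assemble $\Psi$ by setting $\Psi(0)=0$ and $\Psi(x)=\psi_{[x]}(x)$ for $x\neq 0$, and check it is an orthodigraph isomorphism. Bijectivity is immediate from the bijectivity of $\phi$ and of each $\psi_p$. The edges $0\to y$, $x\to 0$ and the loop at $0$ are preserved because $\Psi(0)=0$ is equally universal in $\Gamma_0(\cY)$. For two nonzero vertices with $[x]=[y]$, both $\Gamma_0(\cX)$ and $\Gamma_0(\cY)$ have no edge between $x,y$ and their images (which lie in the single fiber over $\phi([x])$ and carry no internal edges, and no loops since they are nonzero); for $[x]\neq[y]$ the chain $x\to y\iff[x]\to[y]\iff\phi([x])\to\phi([y])\iff\Psi(x)\to\Psi(y)$ holds, the middle equivalence being the isomorphism property of $\phi$ and the outer ones being homogeneity together with $\Psi(x),\Psi(y)$ lying in the distinct fibers over $\phi([x]),\phi([y])$. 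There is no genuine obstacle: the only points needing care are that the internal edge-freeness of fibers renders the arbitrary choice of each $\psi_p$ harmless, and that the equality of fiber cardinalities relies precisely on $\cX$ and $\cY$ sharing the field $\FF$.
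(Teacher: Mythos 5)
Your proposal is correct and follows essentially the same route as the paper: both lift the projective isomorphism $\Phi$ to $\Gamma_0$ fiberwise, sending $0$ to $0$ and matching the edge-free fibers $\FF x\setminus\{0\}$ over $[x]$ and $\Phi([x])$. The only cosmetic difference is that the paper uses the canonical bijection $\lambda\ell'\mapsto\lambda\Phi(\ell)'$ on each fiber, whereas you observe (correctly) that any bijection of fibers works since fibers carry no internal edges.
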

\begin{proof}
Let $\Phi\colon\Gamma(\cX)\rightarrow\Gamma(\cY)$ be  an isomorphism. For every projective point $\ell\in\PP\cX$ and $\Phi(\ell)\in\PP\cY$, choose their representatives $\ell'\in\ell$ and $\Phi(\ell)'\in\Phi(\ell)$, respectively. Then the induced map $\Psi\colon\Gamma_0(\cX)\to\Gamma_0(\cY)$, defined by
$$\Psi(\lambda \,\ell')=\lambda \Phi(\ell)',$$
is a  graph isomorphism.
\end{proof}
However, as we show at the end of this section, $\Gamma_0(\cX)$ and $\Gamma_0(\cY)$ being  isomorphic does not always imply that  $\Gamma(\cX)$ and $\Gamma(\cY)$ are isomorphic. We now introduce a class of norms, where the converse of the above lemma is true. Within the introduced class we could easily adapt Lemmas~\ref{lemma:two-dimensional-projective} and~\ref{lemma:high-dimensional-projective} to  the setup of  orthodigraph~$\Gamma_0(\cX)$ of the original normed space~$\cX$  rather than its projectivization,~$\Gamma(\cX)$.

\begin{defn}
We say that the elements $x, y \in \cX$ are {\em BJ-equivalent} (denoted by $x \simbj y$) if $x^\bot=y^\bot$ and ${}^\bot x={}^\bot y$.
\end{defn}
Since BJ-orthogonality is homogeneous, every two nonzero points in $\FF x$ are BJ-equivalent.  However, the converse is not always true: in $(\RR^3,\norm_{\infty})$ the points $(1,1/2,0)$ and $(1,1/3,0)$ are clearly linearly independent, but still are BJ-equivalent~(see~\cite[Example~3.9]{AGKRZ23}).
\begin{defn}\label{de:BJ-norm}
    A norm is called a \emph{BJ-norm} if BJ-equivalent elements are always linearly dependent.
\end{defn}

Observe that, in a BJ-normed space~$\cX$, the equivalence classes induced by BJ-equivalency relation are $\{0\}$ and points in $\PP\cX$.

\begin{lem}\label{smooth-rotund}
If a norm is rotund or smooth, then it is a  BJ-norm.
\end{lem}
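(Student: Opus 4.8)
The plan is to prove both implications (rotund $\Rightarrow$ BJ-norm and smooth $\Rightarrow$ BJ-norm) by showing that if $x \simbj y$ for two nonzero vectors $x, y$ (we may assume both nonzero since $0$ is BJ-equivalent only to itself), then $x$ and $y$ must be linearly dependent. In each case I would normalize so that $\|x\| = \|y\| = 1$ and derive a contradiction from the assumption that $x$ and $y$ are linearly independent.

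\textbf{The smooth case.} Here I would use the incoming neighborhood condition ${}^\bot x = {}^\bot y$ together with \eqref{eq36}. In a smooth space every nonzero vector has a unique supporting functional; write $f_x, f_y$ for the supporting functionals of $x, y$. The key fact is that $z \perp x$ means some supporting functional of $z$ annihilates $x$, and dually one should recast the incoming condition in terms of which functionals can ``see'' $x$. Rather than fight with incoming neighborhoods directly, I would instead exploit the outgoing condition $x^\bot = y^\bot$, which by \eqref{eq36} (with smoothness forcing $\partial\|x\| = \{f_x\}$, $\partial\|y\| = \{f_y\}$) gives $\Ker f_x = x^\bot = y^\bot = \Ker f_y$. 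Two functionals with the same kernel are scalar multiples, so $f_y = \lambda f_x$, and since both have norm one, $|\lambda| = 1$. Now I need to pass from equality of supporting functionals back to dependence of $x$ and $y$. Here I would invoke that in a smooth finite-dimensional (or reflexive) space the duality map is injective: distinct normalized smooth points have distinct supporting functionals, or more precisely, $f_x = f_y$ (after absorbing the unimodular scalar into $y$) forces $x = y$ because the norm is smooth and strictly convex on the relevant slice. If the paper only assumes smoothness and not reflexivity, I would argue via the incoming neighborhood instead: $f_x$ attains its norm at $x$, and smoothness at $x$ combined with $\Ker f_x = \Ker f_y$ pins down the dependence.

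\textbf{The rotund case.} Here smoothness may fail, so I would work with the \emph{incoming} neighborhood. Suppose $x \simbj y$ with $\|x\| = \|y\| = 1$ but $x, y$ linearly independent. The idea is that rotundness means no line segment lies on the unit sphere, so I want to produce a vector $z$ with $z \perp x$ but $z \not\perp y$ (contradicting ${}^\bot x = {}^\bot y$). By \eqref{eq36} and James' characterization, $z \perp x$ iff some supporting functional of $z$ kills $x$. Rotundness is equivalent to the statement that supporting functionals are ``sharply'' attained: each nonzero functional $f$ attains its norm at a unique normalized point. So if I pick a functional $f$ supporting $x$ (i.e. $f(x) = \|f\| = 1$), rotundness guarantees $x$ is the \emph{only} normalized point where $f$ attains its norm, hence $f(y) < 1 = \|f\|$ since $y \neq x$ is normalized. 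Then any $z$ with $f$ among its supporting functionals satisfies $z \perp x$, and I must check that $z \not\perp y$ can be arranged; more cleanly, I would take $z = x$ itself and note $x \perp x$ fails, so instead I construct $z$ so that $f$ is its \emph{unique} relevant functional. The cleanest route: use that in a rotund space, $x^\bot = y^\bot$ forces the dual relationship, or directly show that if $x,y$ are independent and normalized, the functional exposing $x$ does not support $y$, yielding a vector in ${}^\bot x \setminus {}^\bot y$.

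\textbf{Main obstacle.} The delicate point is handling the two one-sided neighborhoods correctly: BJ-orthogonality is not symmetric, so $x^\bot = y^\bot$ and ${}^\bot x = {}^\bot y$ carry genuinely different information, and the smooth and rotund hypotheses are in a sense dual to each other (smoothness controls the outgoing structure through unique supporting functionals, rotundness controls the incoming structure through unique norm-attaining points). The hardest step will be, in each case, converting the equality of a neighborhood into linear dependence of $x$ and $y$ without accidentally assuming the \emph{other} regularity property; I expect to lean on \eqref{eq36} together with James' theorem (\cite[Theorem~2.1]{James1}) and the strict convexity/differentiability of the norm to rule out the independent case, treating the unimodular scalar ambiguity (in the complex case) as a routine but necessary bookkeeping detail.
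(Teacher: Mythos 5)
Your proposal has the roles of smoothness and rotundity exactly reversed, and this creates genuine gaps in both halves. In the smooth case you lean on the outgoing condition $x^\bot=y^\bot$ and claim that ``distinct normalized smooth points have distinct supporting functionals.'' This is false without rotundity: take the stadium-shaped unit ball in $\RR^2$ (convex hull of two unit disks centered at $(\pm 1/2,0)$), which is smooth but has a flat segment on top. Two distinct normalized points on that segment are linearly independent, each is smooth, and both have the same unique supporting functional $e_2^\ast$, hence the same outgoing neighborhood $\Ker e_2^\ast$. So for a merely smooth norm, $x^\bot=y^\bot$ does \emph{not} force $\FF x=\FF y$, and your primary route cannot close. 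The fallback you mention (``argue via the incoming neighborhood instead'') is the correct one and is what the paper does: for linearly independent $x,y$ pick a norm-attaining $f$ with $f(y)=0\neq f(x)$, let $z$ be a point where $f$ attains its norm; smoothness makes $f$ the \emph{unique} supporting functional at $z$, so $z\perp y$ but $z\not\perp x$, whence ${}^\bot x\neq{}^\bot y$. You never carry out this step.

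In the rotund case your construction breaks at the sentence ``any $z$ with $f$ among its supporting functionals satisfies $z\perp x$'': by \eqref{eq36}, $f\in\partial\|z\|$ gives $z\perp w$ only for $w\in\Ker f$, and you chose $f$ with $f(x)=1\neq 0$, so this yields nothing about $z\perp x$ (indeed, by rotundity the only normalized such $z$ is $x$ itself, and $x\not\perp x$). The incoming neighborhood is the wrong tool here. The paper instead uses the \emph{outgoing} condition and quotes \cite[Lemma~2.6]{AGKRZ23}: in a rotund space $x^\bot=y^\bot$ already implies $\FF x=\FF y$ (via \eqref{eq15} this is the projective statement $[x]^\bot=[y]^\bot\Rightarrow[x]=[y]$). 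The correct duality, which your ``main obstacle'' paragraph states backwards, is: rotundity makes the \emph{outgoing} neighborhood determine the line, smoothness makes the \emph{incoming} neighborhood determine the line. Since BJ-equivalence assumes both equalities, either regularity hypothesis suffices --- but each one only through ``its own'' neighborhood.
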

\begin{proof}
   Assume first that $\cX$ has a rotund norm. Take any $x, y \in \cX \setminus \{ 0 \}$  with $x^\bot=y^\bot$. From~\eqref{eq15}, it is straightforward that  this is equivalent to $[x]^\bot=[y]^\bot$, which,   by \cite[Lemma~2.6]{AGKRZ23}, implies $[x] = [y]$, as claimed.

   Assume now that $(\cX,\norm)$  is a smooth normed space. Take two linearly independent $x,y\in\cX$ and choose a norm-attaining functional $f$  with $f(x)\neq0$ and $f(y)=0$ (it exists by Hahn-Banach theorem). Without loss of generality we may assume that $\|f\|=1$.
   Choose $z\in\cX$ such that $f$ is a supporting functional at $z$. Then $z\in {}^\bot y$. But since $\norm$ is smooth norm, $f$ is the unique supporting functional at $z$ and $f(x)\neq 0$, so $z\notin {}^\bot x$. Hence, ${}^\bot x\neq{}^\bot y$.
\end{proof}

As we will show in the last section, the class of BJ-norms is broader and contains not only smooth or strictly convex norms. However, as we have already mentioned, not every norm is a BJ-norm.

For the class of BJ-normed spaces $\cX$, we can easily see that smooth points can be described not only in terms of the projective orthodigraph $\Gamma(\cX)$ but also from the digraph $\Gamma_0(\cX)$ whose vertex set consists of all vectors in~$\cX$.

Recall that $0$ is the only vertex in $\Gamma_0(\cX)$ with a loop.
\begin{cor} \label{corollary:two-dimensional}
  Let $\cX$ be a two-dimensional BJ-normed space over $\FF$. Then the following conditions are equivalent for $x\in\Gamma_0(\cX)$:
  \begin{itemize}
    \item[(i)] $x$  corresponds to  a smooth point in $\cX$.
    \item[(ii)]  $x\in\Gamma_0(\cX)$ has no loops and every two elements of $x^\bot$ are BJ-equivalent.
  \end{itemize}
\end{cor}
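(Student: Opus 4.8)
The plan is to read the equivalence directly off the structure of the outgoing neighborhood $x^\bot$, using the description $x^\bot=\bigcup_{f\in\partial\|x\|}\Ker f$ from \eqref{eq36}. First I would settle the loop clause: since $0$ is the unique loop vertex of $\Gamma_0(\cX)$, the requirement that $x$ have no loops is exactly $x\neq 0$; as a smooth point is by definition nonzero, both (i) and (ii) force $x\neq 0$, and I would assume this throughout. I would also record the two-dimensional feature that each supporting functional of $x$ has a one-dimensional kernel, so that $x^\bot$ is a union of lines through the origin, one per projective direction of a supporting functional of $x$, and that two distinct supporting functionals are linearly independent (hence have distinct kernels), exactly as in the proof of Lemma~\ref{lemma:two-dimensional-projective}.

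For (i)$\Rightarrow$(ii), if $x$ is smooth then $\partial\|x\|=\{f\}$ is a singleton, so $x^\bot=\Ker f$ is a single line. Any two of its nonzero vectors are scalar multiples of one another and are therefore BJ-equivalent, by homogeneity of BJ-orthogonality (every two nonzero points of $\FF z$ are BJ-equivalent). Thus all nonzero elements of $x^\bot$ fall into one BJ-equivalence class, which is (ii). Here I would flag the single bookkeeping point: the loop vertex $0$ always lies in $x^\bot$ and is \emph{not} BJ-equivalent to any nonzero vector (indeed $0^\bot=\cX$ while $z^\bot\subsetneq\cX$ whenever $z\neq 0$, as $z\notin z^\bot$); accordingly, condition (ii) is to be read as a statement about the nonzero---equivalently, the non-loop---elements of $x^\bot$.

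For $\neg$(i)$\Rightarrow\neg$(ii), suppose $x$ is nonsmooth (and nonzero). Then its representative admits at least two $\FF$-linearly independent supporting functionals $f_1,f_2$, whose kernels are two distinct lines inside $x^\bot$; choosing nonzero $z_1\in\Ker f_1$ and $z_2\in\Ker f_2$ produces two linearly independent elements of $x^\bot$. This is precisely where the BJ-norm hypothesis enters: by Definition~\ref{de:BJ-norm}, BJ-equivalent vectors are linearly dependent, so the independent pair $z_1,z_2$ cannot be BJ-equivalent, and (ii) fails. Together with the case $x=0$, which fails the loop clause and is not smooth, this yields the contrapositive.

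I expect the crux to be the $\neg$(i)$\Rightarrow\neg$(ii) step and, specifically, the appeal to the BJ-norm hypothesis: producing two linearly independent vectors in $x^\bot$ from two independent supporting functionals is immediate, but concluding that they are not BJ-equivalent is exactly Definition~\ref{de:BJ-norm} and genuinely needs it, since in an arbitrary norm linearly independent vectors can be BJ-equivalent (as already noted for $(1,1/2,0)$ and $(1,1/3,0)$ in $(\RR^3,\norm_\infty)$). The only other delicate point is the consistent treatment of the loop vertex $0$ described above, which always belongs to $x^\bot$ but is never BJ-equivalent to a nonzero vector and must therefore be excluded when one speaks of the elements of $x^\bot$ being mutually BJ-equivalent.
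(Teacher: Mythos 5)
Your proof is correct and follows essentially the same route as the paper, whose entire proof is a one-line reduction to Lemma~\ref{lemma:two-dimensional-projective} combined with Definition~\ref{de:BJ-norm}; you have simply unpacked that lemma's argument (one kernel per supporting functional in dimension two, linear independence of distinct supporting functionals, and the BJ-norm hypothesis converting ``not linearly dependent'' into ``not BJ-equivalent'') directly in the nonprojective setting. Your side remark that $0\in x^\bot$ is never BJ-equivalent to a nonzero vector and must be excluded when reading condition~(ii) correctly identifies a small imprecision in the statement itself, and your resolution is the intended reading.
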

\begin{proof}
Immediately follows from Lemma~\ref{lemma:two-dimensional-projective} and Definition~\ref{de:BJ-norm} of a BJ-norm.
\end{proof}
We call a subset $\Omega\subseteq \Gamma_0(\cX)$ to be a \emph{BJ-set} if any two vertices in $\Omega$ are BJ-equivalent. In other words, $\Omega$ is completely contained in a single BJ-equivalence class.
  \begin{cor} \label{corollary:high-dimensional}
Let $\cX$ be a finite-dimensional BJ-normed space over $\FF$ with $n = \dim\cX\ge 3$. Then the following conditions are equivalent for  $x\in\Gamma_0(\cX)$:
\begin{itemize}
  \item[(i)] $x$  corresponds to  a smooth point in $\cX$.
  \item[(ii)] $x\in\Gamma_0(\cX)$ has no loop and there exist  $(n-2)$ loopless vertices $x_3,\dots,x_{n}$ in $\Gamma_0(\cX)$ such that
  $x^\bot\cap x_3^\bot\cap\dots\cap x_{n}^\bot$ is a BJ-set.
  Moreover, for every reordering of this $(n-2)$-tuple of vertices we can define  $$\Omega:=\{y\in\Gamma_0(\cX);\;\;
  x^\bot\cap y^\bot\cap x_4^\bot \cap \dots\cap x_{n}^\bot \hbox{ is a  BJ-set}\},$$ and then $x_4^\bot\cap\dots\cap x_n^\bot\cap x^\bot\cap\bigcap\limits_{y\in\Omega}y^\bot=\{0\}$.
\end{itemize}
\end{cor}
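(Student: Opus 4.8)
The plan is to reduce the statement to its projective counterpart, Lemma \ref{lemma:high-dimensional-projective}, exactly as Corollary \ref{corollary:two-dimensional} reduced the two-dimensional BJ-case to Lemma \ref{lemma:two-dimensional-projective}. The key device is the observation (stated right after Definition \ref{de:BJ-norm}) that in a BJ-normed space the BJ-equivalence classes are precisely $\{0\}$ and the projective points $[x]\in\PP\cX$. Thus the quotient map $\Gamma_0(\cX)\setminus\{0\}\to\Gamma(\cX)$ identifies BJ-sets (in the sense defined before the Corollary) with single projective points, and this is the dictionary I would use throughout to translate each cardinality-one condition of Lemma \ref{lemma:high-dimensional-projective} into a ``BJ-set'' condition in $\Gamma_0(\cX)$.

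First I would record that a loopless vertex $x\in\Gamma_0(\cX)$ is exactly a nonzero vector, so condition (ii) concerns nonzero $x,x_3,\dots,x_n$, each of which determines a projective point $[x],[x_3],\dots,[x_n]\in\Gamma(\cX)$; moreover, by \eqref{eq15}, for nonzero $x$ the set $x^\bot\setminus\{0\}$ is precisely the union of the lines $\bigcup_{[z]\in[x]^\bot}[z]$. Hence for nonzero vectors the intersection $x^\bot\cap x_3^\bot\cap\dots\cap x_n^\bot$, minus its forced loop vertex $0$, is a union of lines, and it is a BJ-set if and only if all these lines coincide, i.e.\ if and only if the corresponding projective intersection $[x]^\bot\cap[x_3]^\bot\cap\dots\cap[x_n]^\bot$ is a single point. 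This is the exact translation of the cardinality-one condition in Lemma \ref{lemma:high-dimensional-projective}(ii). The second clause (the definition of $\Omega$ and the claim that the large intersection equals $\{0\}$) translates the same way: the projective condition $|x^\bot\cap y^\bot\cap x_4^\bot\cap\dots\cap x_n^\bot|=1$ becomes the BJ-set condition defining $\Omega\subseteq\Gamma_0(\cX)$, and the projective conclusion that $x_4^\bot\cap\dots\cap x_n^\bot\cap x^\bot\cap\bigcap_{y\in\Omega}y^\bot=\varnothing$ becomes the statement that the corresponding intersection in $\Gamma_0(\cX)$ contains no nonzero vector, i.e.\ equals $\{0\}$ (the empty projective intersection pulls back to just the loop vertex).

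With this dictionary in place, I would argue both implications. For (i) $\Rightarrow$ (ii): $x$ nonzero and smooth in $\cX$ means $x$ is loopless and $[x]$ is a smooth point of $\PP\cX$, so Lemma \ref{lemma:high-dimensional-projective} furnishes projective points $[x_3],\dots,[x_n]$ with the required properties; choosing any nonzero representatives $x_3,\dots,x_n$ gives loopless vertices, and the translation above turns the projective conclusions into the BJ-set conclusions of (ii), noting that the set $\Omega$ defined in $\Gamma_0(\cX)$ is exactly the preimage (union the zero vector is excluded by looplessness) of the projective $\Omega$. For (ii) $\Rightarrow$ (i): given loopless $x$ and the $x_i$ satisfying (ii), the translation shows the projectivizations satisfy Lemma \ref{lemma:high-dimensional-projective}(ii), whence $[x]$ is smooth, and since $x$ is a nonzero representative of $[x]$, the point $x$ is smooth in $\cX$. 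I expect the only genuinely delicate step to be verifying that the $\Omega$ defined intrinsically in $\Gamma_0(\cX)$ corresponds under the quotient to the $\Omega$ of the projective lemma, rather than to some strictly larger set; this is where the BJ-norm hypothesis is essential, since it guarantees that a BJ-set really is confined to a single line, so that no spurious vertices $y$ outside the projective $\Omega$ can sneak in. Everything else is bookkeeping with the homogeneity relations in \eqref{eq15}.
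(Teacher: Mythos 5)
Your proposal is correct and takes essentially the same approach as the paper: the paper's own proof is a one-line reduction to Lemma~\ref{lemma:high-dimensional-projective} via Definition~\ref{de:BJ-norm}, which is precisely the dictionary (BJ-sets $\leftrightarrow$ single projective points, $\{0\}\leftrightarrow\varnothing$) that you spell out. Your explicit treatment of the forced vertex $0$ in the intersections and of the correspondence between the two versions of $\Omega$ only makes explicit what the paper leaves implicit.
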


\begin{proof}
Follows from Lemma~\ref{lemma:high-dimensional-projective} and Definition~\ref{de:BJ-norm} of a BJ-norm.
\end{proof}

The following statement is immediate from the above two corollaries: if $\cX,\cY$ are BJ-normed spaces over $\FF$ with $\dim\cX<\infty$, then every digraph isomorphism $\Phi\colon\Gamma_0(\cX)\to \Gamma_0(\cY)$ maps smooth points onto smooth points.

Next, we prove that the converse of Lemma \ref{thm999} holds for BJ-normed spaces.

\begin{lem} \label{lemma:isomorphism-implication}
Let $\cX$ and $\cY$ be BJ-normed spaces over $\mathbb F$. If $\Gamma_0(\cX)$ and $\Gamma_0(\cY)$ are isomorphic, then $\Gamma(\cX)$ and $\Gamma(\cY)$ are also isomorphic.
\end{lem}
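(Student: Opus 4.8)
The plan is to exploit the fact, recorded just after Definition~\ref{de:BJ-norm}, that in a BJ-normed space the nonzero BJ-equivalence classes are precisely the projective points of $\PP\cX$, together with the observation that the relation $\simbj$ is purely graph-theoretic and is therefore transported by any isomorphism of $\Gamma_0$. The isomorphism $\Gamma(\cX)\to\Gamma(\cY)$ will then be read off as the map induced by a given isomorphism $\Gamma_0(\cX)\to\Gamma_0(\cY)$ on equivalence classes.

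First I would fix a digraph isomorphism $\Psi\colon\Gamma_0(\cX)\to\Gamma_0(\cY)$. Since $\Psi$ is a bijection on vertices preserving edges in both directions, it carries outgoing and incoming neighborhoods to their images, $\Psi(x^\bot)=\Psi(x)^\bot$ and $\Psi({}^\bot x)={}^\bot\Psi(x)$ for every vertex $x$. Because $\Psi$ is bijective, equality of neighborhoods is preserved, so $x^\bot=y^\bot$ holds if and only if $\Psi(x)^\bot=\Psi(y)^\bot$, and likewise for incoming neighborhoods. Consequently $\Psi$ preserves BJ-equivalence in both directions: $x\simbj y$ if and only if $\Psi(x)\simbj\Psi(y)$.

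Next I would identify the equivalence classes on both sides. The vertex $0$ is the unique loop vertex in $\Gamma_0(\cX)$ (as $x\perp x$ exactly when $x=0$), so $\Psi$ must send the zero of $\cX$ to the zero of $\cY$. For nonzero $x$, homogeneity of BJ-orthogonality gives $\lambda x\simbj x$ for all $\lambda\neq0$, while the BJ-norm hypothesis (Definition~\ref{de:BJ-norm}) supplies the converse, so the BJ-equivalence class of a nonzero $x$ is exactly $\FF x\setminus\{0\}$; these classes are in canonical bijection with the points of $\PP\cX$. The same description holds in $\cY$. Since $\Psi$ respects $\simbj$ and fixes the zero vertex, it induces a bijection between the nonzero equivalence classes of $\cX$ and those of $\cY$, hence a bijection $\Phi\colon\PP\cX\to\PP\cY$ given by $\Phi([x])=[\Psi(x)]$ for nonzero $x$.

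Finally I would verify that $\Phi$ is well defined, bijective, and edge-preserving. Well-definedness and injectivity follow from the two-way preservation of $\simbj$ together with the BJ-norm property of $\cX$ and $\cY$ (nonzero vectors are linearly dependent precisely when BJ-equivalent), and surjectivity follows from surjectivity of $\Psi$ and $\Psi(0)=0$. Edge preservation is then immediate, since $\Gamma$ is the quotient of $\Gamma_0\setminus\{0\}$: indeed $[x]\to[y]$ in $\Gamma(\cX)$ iff $x\perp y$ iff $\Psi(x)\perp\Psi(y)$ iff $\Phi([x])\to\Phi([y])$ in $\Gamma(\cY)$. The only genuine content — and the step where the BJ-norm hypothesis is indispensable — is the identification of the nonzero BJ-classes with projective points: without it a class could contain linearly independent vectors, so the classes would fail to correspond to points of $\PP\cX$ and the induced map $\Phi$ would not even be well defined.
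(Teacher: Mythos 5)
Your proposal is correct and follows essentially the same route as the paper: both send $0$ to $0$ via the unique loop vertex, use the BJ-norm hypothesis to identify the nonzero BJ-equivalence classes with the projective points (so that $\Psi(\FF x)=\FF\Psi(x)$), and then read off the induced bijection on $\PP\cX$ together with edge preservation from the quotient structure. The only cosmetic difference is that the paper defines the induced map via a chosen representative of each projective point while you define it directly on equivalence classes; the substance is identical.
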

\begin{proof}  Let $\Phi\colon\Gamma_0(\cX)\rightarrow\Gamma_0(\cY)$ be an isomorphism. For every projective point $\ell\in\PP\cX$ choose its representative $\ell'\in\ell$. Since $0\in\Gamma_0(\cX)$ is the only vertex with a loop, we clearly have $\Phi^{-1}(0)=\{0\}$. Therefore, $$\Psi(\ell) := [\Phi(\ell')]$$
is a well-defined map from $\Gamma(\cX)$ to $\Gamma(\cY)$. Clearly, $\Psi$ is a strong graph homomorphism. Since $\cX$ is a BJ-normed space, for all $x\neq 0$ it holds that $$\FF x=\{y\in\cX; \; y^\bot=x^\bot\text{ and } {}^\bot y={}^\bot x\}.$$ Since $\cY$ is also a BJ-normed space and $\Phi$ is an isomorphism, we have $$\Phi(\FF x) =\FF\Phi(x).$$
Then injectivity and surjectivity of the induced map $\Psi$ are straightforward.
\end{proof}

Furthermore, in Examples~\ref{example:nonsmooth-Radon-plane} and~\ref{example:complex-Radon-plane} we construct a nonsmooth two-dimensional normed space $\cX$ over $\FF \in \{ \RR, \CC \}$ such that there exists a (nonhomogeneous) bijection between $\cX$ and a Hilbert space $(\FF^2, \norm_2)$ which preserves BJ-orthogonality in both directions. In other words, $\Gamma_0(\cX)$ and $\Gamma_0(\FF^2, \norm_2)$ are isomorphic but, by Lemma~\ref{lemma:two-dimensional-projective}, $\Gamma(\cX)$ and $\Gamma(\FF^2, \norm_2)$ are not isomorphic. This will show that the  BJ-norm assumption  is essential in Corollary~\ref{corollary:two-dimensional}.

By~\cite[Theorem~1]{James0}, if $\dim \cX \geq 3$, then BJ-orthogonality in $\cX$ is symmetric if and only if the norm on $\cX$ is induced by the inner product. However, this result does not hold for $\dim \cX=2$. Two-dimensional normed spaces, in which BJ-orthogonality is symmetric but the norm is not induced by an inner product, are called {\em Radon planes}. Their first examples are due to Birkhoff~\cite{Birkhoff} and James~\cite[p.~561]{James0}. We provide a complete characterization of {\em real} Radon planes which was obtained by Day~\cite{Day}. We denote $x \pperp y$ if $x \perp y$ and $y \perp x$ for some $x, y \in \cX$.

\begin{rem}[{\cite[pp. 330--333]{Day}}] \label{remark:Radon-planes}
Let $\cX$ be a two-dimensional real normed space. Then BJ-orthogonality in $\cX$ is symmetric if and only if modulo a linear transformation its unit sphere ${\bf S}_{\cX}$ can be obtained by the following procedure:
\begin{itemize}
    \item[(i)] Choose  (auxiliary) any two-dimensional real normed space~$\cY$.
    \item[(ii)] Find any two normalized vectors $x,y \in \cY$ with $x \pperp y$ (they always exist by~\cite[Theorem~2]{Taylor}).
    \item[(iii)] Find supporting functionals $f_x, f_y \in \cY^*$ with $f_x(x) = f_y(y) = 1$ and $f_x(y) = f_y(x) = 0$.
    \item[(iv)] Choose a coordinate system in $\cY$ with $x$ and $y$ at $(1,0)$ and $(0,1)$, correspondingly. Similarly, choose a coordinate system in $\cY^*$ with $f_x$ and $f_y$ at $(0,1)$ and $(-1,0)$.
    \item[(v)] Set the first and the third quadrants of ${\bf S}_{\cX}$ equal to the first and the third quadrants of ${\bf S}_{\cY}$, set the second and the fourth quadrants of ${\bf S}_{\cX}$ equal to the second and the fourth quadrants of ${\bf S}_{\cY^*}$.
\end{itemize}
\end{rem}

Tanaka~ showed in~\cite[Theorem 4.7]{Tanaka22}  that orthodigraphs of real smooth Radon planes are  isomorphic to the orthodigraph of the Euclidean plane. We now extend his result. We will rely on the following simple lemma.

\begin{lem}\label{lem:countableperturbation}
    Let $f\colon[a,b]\to\RR$ be a nonconstant continuous function and let $g\colon[a,b] \to \RR$ be a function whose image is  at most countable. Then the image of $f+g$  contains continuum many points.
\end{lem}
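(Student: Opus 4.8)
The plan is to prove the statement by a counting/cardinality argument: the function $f+g$ cannot be ``too flat'' on too many fibers, because $f$ is genuinely nonconstant while $g$ only takes countably many values. First I would invoke the fact that a nonconstant continuous function on an interval assumes continuum many distinct values, as a consequence of the Intermediate Value Theorem: since $f$ is continuous and nonconstant, there exist points $s,t\in[a,b]$ with $f(s)\neq f(t)$, and by the IVT the image $f([a,b])$ contains the entire closed interval between $f(s)$ and $f(t)$, hence has cardinality continuum.

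The core of the argument is then to compare the ``fibers'' of $g$ with the behavior of $f$. Let $\{c_k;\; k\in\mathcal K\}$ be the (at most countable) image of $g$, and for each value $c_k$ put $E_k:=g^{-1}(c_k)\subseteq[a,b]$. Since $g$ takes only these values, the domain decomposes as $[a,b]=\bigcup_{k\in\mathcal K}E_k$, a countable union. On each fiber $E_k$ the function $g$ is constant (equal to $c_k$), so $f+g$ restricted to $E_k$ is just $f+c_k$, a rigid vertical shift of $f|_{E_k}$. I would argue by contradiction: suppose the image of $f+g$ were at most countable. Then for each $k$ the set $(f+g)(E_k)=c_k+f(E_k)$ would be at most countable, hence each $f(E_k)=(f+g)(E_k)-c_k$ would be at most countable, and therefore $f([a,b])=\bigcup_{k\in\mathcal K}f(E_k)$ would be a countable union of countable sets, hence countable. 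This contradicts the first step, which showed $f([a,b])$ has cardinality continuum.

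Hence the assumption fails and the image of $f+g$ must be uncountable. To upgrade ``uncountable'' to the stated ``continuum many points'', I would note that the image $(f+g)([a,b])$ is the continuous image (well, at worst a measurable/arbitrary image, but here we only need an injection) of a subset of $\RR$; more directly, an uncountable subset of $\RR$ has cardinality at least $\aleph_1$, and since it sits inside $\RR$ it has cardinality at most continuum, and one can appeal to the fact that the range contains, for at least one fiber-free comparison, an actual interval's worth of values. The cleanest route avoiding the Continuum Hypothesis: pick two values $c_k,c_\ell$ in the image of $g$; if $g$ is in fact constant on all of $[a,b]$ then $f+g$ is just a shift of $f$ and the result is immediate from step one, so assume otherwise. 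In general the contradiction argument above already yields uncountability, and to get full continuum one observes that removing a countable set from the continuum-sized set $f([a,b])$ still leaves continuum many points, so restricting attention to values attained on a single large fiber gives continuum many outputs of $f+g$.

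The main obstacle I anticipate is the final cardinality upgrade from ``uncountable'' to ``continuum,'' since naively an uncountable subset of $\RR$ need not have cardinality exactly continuum without additional hypotheses. The clean way around this is to avoid the union-of-fibers bound for the lower estimate and instead locate a single fiber (or a slightly more careful decomposition) on which $f$ already attains continuum many values: since $f([a,b])$ contains an interval $I$ of cardinality continuum, and the fibers $\{f^{-1}(\text{point})\cap E_k\}$ partition things finely, a pigeonhole over the countably many values of $g$ forces at least one shift $c_k+f(E_k)$ to meet continuum many distinct levels, so that $(f+g)([a,b])$ itself contains continuum many points. I would make this precise by choosing, for each value $y\in I$, a preimage $x_y\in f^{-1}(y)$, noting $g(x_y)$ ranges over the countable image of $g$, and applying the pigeonhole principle to conclude that some single value $c_k$ is hit by continuum many of the $x_y$; on that index set $f+g\equiv f+c_k$ injects continuum many distinct values $y+c_k$ into the image of $f+g$.
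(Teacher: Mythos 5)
Your final argument (the pigeonhole over the fibers of $g$) is correct and does prove the lemma, but it is a genuinely different route from the paper's. You fix an interval $I\subseteq f([a,b])$ of cardinality continuum, choose a preimage $x_y\in f^{-1}(y)$ for each $y\in I$, and argue that some single fiber $\{y\in I;\; g(x_y)=c_k\}$ must have cardinality continuum, on which $(f+g)(x_y)=y+c_k$ is injective in $y$. The paper instead works entirely on the image side: it forms the still-countable difference set $\Omega'=g([a,b])-g([a,b])$, partitions a continuum-sized set on which $f$ is injective into the equivalence classes $f(x_t)+\Omega'$, notes that each class is countable so there must be continuum many classes, and checks that $f+g$ is injective on a choice of representatives (a collision of two representatives would force $f(x'_t)-f(x'_s)=g(x'_s)-g(x'_t)\in\Omega'$, contradicting inequivalence). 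Two remarks on your write-up. First, your ``pigeonhole'' for a continuum-sized set split into countably many pieces is not the finite pigeonhole principle: the claim that one piece has full cardinality continuum requires that the cofinality of $2^{\aleph_0}$ is uncountable (K\"onig's theorem), since in general an infinite cardinal \emph{can} be a countable union of sets of strictly smaller cardinality; you should cite this explicitly. The paper's variant needs only the more elementary estimate that a union of $\kappa$ countable sets has cardinality at most $\max(\kappa,\aleph_0)$. Second, your initial contradiction argument via the decomposition $[a,b]=\bigcup_k g^{-1}(c_k)$ only yields uncountability, as you yourself observe, so it is redundant once the final paragraph is in place; the last paragraph is the actual proof and should be promoted to the main argument.
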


\begin{proof}
The image of the continuous  function $f$ is a nondegenerate interval.  Let $(x_t)_t\in[a,b]$ be continuum many points    which $f$ maps bijectively onto $\Xi :=(f(x_t))_t$. Let $\Omega=g([a,b])$ be at most countable set. Then the same is true for
$$\Omega':=\Omega-\Omega=\bigcup_{y\in\Omega} (\Omega-y),$$
and the set $\Xi$ is partitioned into continuum many equivalence classes under the equivalence relation $f(x_t)\simeq f(x_s)$ if $f(x_t)-f(x_s)\in\Omega'$. The $f$-preimages of these equivalence classes partition the set $(x_t)_t$ into continuum many disjoint sets. And $f+g$ is injective on the representatives because if $(f+g)(x'_t)=(f+g)(x
'_s)$ for two representatives of distinct preimages, then $f(x'_t)-f(x'_s)= g(x'_s)-g(x'_t)\in\Omega-\Omega=\Omega' $, which contradicts the fact that $f(x'_t)$ and $f(x'_s)$ are not equivalent.
\end{proof}

For brevity, we will refer to the real affine dimension of a norm's face~$F$ as simply \emph{the dimension of $F$.}

\begin{lem} \label{lem:common-part-Radon-planes}
Let ${\mathcal R}:=(\FF^2,\norm_{\mathcal{R}})$ be a Radon plane over $\FF \in \{ \RR, \CC \}$, and $x \in {\mathcal R}$ be a point in the norm's unit sphere. Then the following statements hold.
  \begin{itemize}
    \item[(i)] There exists a norm's face~$F_x$ such that
    \begin{equation}\label{eq:cmplxradon3}
    x^\bot=\FF F_x.
    \end{equation}
    The dimension of $F_x$ equals $0$ if and only if $x$ is a smooth point.
    \item[(ii)] If, moreover, the boundary points of each nonzero-dimensional face in~${\mathcal R}$ are smooth, then:
    \begin{itemize}
    \item[(a)] If $x$ is nonsmooth, then $z^\bot=\FF x$ for any $z \in F_x$. Consequently, each point in $F_x$ is BJ-orthogonal only to $\FF x$, and $x$ is BJ-orthogonal only to $\FF F_x$.
    \item[(b)] Given an arbitrary nonzero-dimensional norm's face $F$ in~${\mathcal R}$, we have $\FF F \subseteq \FF F_x$ for some nonsmooth normalized point $x \in {\mathcal R}$.
    \end{itemize}
  \end{itemize}
\end{lem}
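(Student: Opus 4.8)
The plan is to describe, for each normalized $x$, the set $x^\bot$ directly through the functionals that annihilate $x$, exploiting the \emph{symmetry} of BJ-orthogonality on a Radon plane (Day's Remark~\ref{remark:Radon-planes} will not be needed, only symmetry). I would start from $x^\bot=\bigcup_{f\in\partial\|x\|}\Ker f$ in \eqref{eq36}, together with the fact (James's theorem, quoted right after \eqref{eq36}) that $y\perp x$ holds iff some supporting functional at $y$ annihilates $x$. Since $\mathcal R$ is Radon, a normalized $y$ lies in $x^\bot$ iff $x\perp y$ iff $y\perp x$, i.e. iff $\partial\|y\|$ meets the set of unit functionals killing $x$. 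In $\FF^2$ those functionals form one $\FF$-line in $\cX^*$, so the unit ones are exactly $\pm g_0$ (if $\FF=\RR$) or the circle $\TT g_0$ (if $\FF=\CC$), where $g_0$ is a fixed unit functional with $g_0(x)=0$. Reading off which $y$ satisfy $g_0\in\partial\|y\|$ gives $g_0(y)=1$, so I would set $F_x:=\{y;\ \|y\|=1,\ g_0(y)=1\}$, an exposed face by Lemma~\ref{proposition:exposed-faces}. A short computation with the phases then yields $x^\bot\cap{\bf S}_{\mathcal R}=F_x\cup(-F_x)$ in the real case, resp. $\TT F_x$ in the complex case, hence \eqref{eq:cmplxradon3}, i.e. $x^\bot=\FF F_x$.

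For the dimension claim in (i) I would show that $x$ is smooth iff $x^\bot$ is a single $\FF$-line. If $x$ is smooth then $\partial\|x\|=\{f_x\}$ and $x^\bot=\Ker f_x$; intersecting this one line with the sphere and with $\{g_0=1\}$ pins $F_x$ down to a single point, so $\dim F_x=0$. Conversely, if $\dim F_x=0$ then $x^\bot=\FF F_x$ is one $\FF$-line, whereas a nonsmooth $x$ would carry two linearly independent supporting functionals $f_1,f_2$ with distinct kernels $\Ker f_1,\Ker f_2\subseteq x^\bot$, forcing $x^\bot$ to contain at least two distinct $\FF$-lines, a contradiction. This argument also records, for later use, that a \emph{nonsmooth} $x$ has $x^\bot$ equal to a union of at least two distinct $\FF$-lines, so that $F_x$ cannot lie inside a single $\FF$-line.

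For (ii)(a), fix a nonsmooth $x$, so $F_x$ is a nonzero-dimensional exposed face, exposed by $g_0$ with $g_0(x)=0$. Given $z\in F_x$, symmetry gives $\FF x\subseteq z^\bot$ at once (since $F_x\subseteq x^\bot$ means $x\perp z$, hence $z\perp x$). The real content is that every $z\in F_x$ is smooth with unique supporting functional $g_0$, whence $z^\bot=\Ker g_0=\FF x$ (the last equality because $\Ker g_0$ is one $\FF$-line containing $x$). For $z$ in the relative interior I would suppose a second unit supporting functional $g_1$ existed and use a convexity/extension trick: for every $w\in F_x$ the point $z$ lies on an open segment of $F_x$ through $w$, forcing $\mathrm{Re}\,g_1(w)=1$ and hence $g_1(w)=1$ on all of $F_x$; since $F_x$ is not contained in a single $\FF$-line (by the observation above), it spans over $\FF$, so $g_1=g_0$, a contradiction. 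For $z$ a boundary point of $F_x$ I would invoke the standing hypothesis of (ii) that such points are smooth; their unique supporting functional must be $g_0$ since $g_0$ already supports them. This gives $z^\bot=\FF x$ for all $z\in F_x$, and the two ``consequently'' statements merely restate this and \eqref{eq:cmplxradon3}.

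Finally, for (ii)(b) I would take a relative interior point $z$ of a given nonzero-dimensional face $F$ and any supporting functional $g$ at $z$. The same segment-extension argument shows $F\subseteq\{y;\ g(y)=1\}=:F'$, an exposed face containing $F$ and hence still nonzero-dimensional. Choosing a unit $x$ with $g(x)=0$, the functional $g$ is a scalar multiple of the $g_0$ built in (i), so $F'$ is a scalar multiple of $F_x$ and $\FF F=\FF F'=\FF F_x$; since $F_x$ is nonzero-dimensional, $x$ is nonsmooth by (i). I expect the main obstacle to be the complex case of (ii)(a): controlling the supporting functionals at relative-interior points of $F_x$ requires knowing that $F_x$ genuinely spans $\CC^2$ over $\CC$, which is exactly where the observation ``nonsmooth $\Rightarrow x^\bot$ is a union of at least two complex lines'' from the proof of (i) is used; the real case is the degenerate shadow of the same computation.
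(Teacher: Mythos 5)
Your argument is correct and is essentially the paper's proof. Part (i) is the same construction: you locate the one-dimensional annihilator of $x$ in the dual and take $F_x$ to be the exposed face of a unit functional spanning it, whereas the paper arrives at the same functional by phase-adjusting two linearly independent vectors of $x^\bot$ so that they share a supporting functional; the dimension criterion and the use of Radon symmetry in both directions are identical. In part (ii) the key step --- that relative-interior points of a face are smooth, via the segment-extension/convexity computation --- is the same as in the paper; the only real difference is organizational: the paper proves (b) first (using Lemma~\ref{thm5} to obtain a common one-dimensional outgoing neighborhood for all points of $F$, which is why it invokes the smoothness hypothesis already there) and then deduces (a) as the special case $F=F_x$, while you prove (a) directly and obtain (b) by embedding $F$ into the exposed face $F'$ of a supporting functional at a relative-interior point, which happens not to need the hypothesis of (ii) for part (b) at all. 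One cosmetic point: in (b) you should claim only $\FF F\subseteq\FF F'=\FF F_x$ rather than equality, since $F$ may be a proper subface of $F'$; the inclusion is all the statement requires.
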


\begin{proof}
(i). Assume first that $x$ is nonsmooth. Then it has at least two $\FF$-linear supporting functionals, and their kernels are distinct one-dimensional subspaces of $\FF^2$.   So, by \cite[Theorem 2.1]{James1}, there exist at least two normalized linearly independent vectors
   $$y_1, y_2\in x^\bot,$$ and since $\norm_{\mathcal R}$ is a Radon norm, we also have $x\in y_1^\bot\cap y_2^\bot$.
   Then, again by~\cite[Theorem 2.1]{James1},
   there exist supporting functionals $f_1,f_2\colon\FF^2\to\FF$ of $y_1$ and $y_2$, respectively, such that
\begin{equation}\label{eq:cmplxradon}
f_1(x)=f_2(x)=0.
\end{equation}
Then their kernels are the same, and hence $f_1, f_2$ must be linearly dependent. Since they are normalized, $f_2=\mu f_1$ for some unimodular number $\mu$. By replacing $y_2 $ with $\mu y_2\in x^\bot$,  we achieve that $y_1,(\mu y_2)$ both share the same supporting functional $f:=f_1=f_2$. Following along the same lines shows that for  any normalized $z\in x^\bot$ there exists a unimodular number $\mu_z$ such that $f$ is a supporting functional of $\mu_z z$.
It is straightforward that then there exists a convex set
\begin{equation} \label{eq:exposed-face}
F_x :=f^{-1}(1)\cap {\bf B}_{\mathcal R}
\end{equation}
(where ${\bf B}_{\mathcal R}$ is the norm's closed unit ball) such that $x^\bot\subseteq\FF F_x$.
Conversely, if $z\in F_x$, then $f$ is its supporting functional which, by \eqref{eq:cmplxradon},  annihilates $x$, so $z\perp x$, and since $\mathcal{R}$ is a Radon plane, this implies $x\perp z$, that is, $z\in x^\bot$. Thus $x^\bot=\FF F_x$. Note that, by \eqref{eq:exposed-face} and Lemma~\ref{proposition:exposed-faces}, the set $F_x$ is actually a norm's (exposed) face.

Clearly, \eqref{eq:cmplxradon3} holds also for a smooth point $x$, if we let $F_x$ to be the norm-one representative in the one-dimensional kernel of the supporting functional at~$x$.

\medskip

(ii). We first prove item (b). It is not hard  to see that each point~$z$ in the relative interior of $F$ is smooth: namely, $z$ is  the middle point of some nontrivial line segment $$[y_1,y_2]\subseteq F,$$
and then, if $f$ is any $\FF$-linear supporting functional for $z=\frac{y_1+y_2}{2}$, it follows that $$1=f(z)=\frac{f(y_1)+f(y_2)}{2}.$$
Since $\|f\|=\|y_1\|=\|y_2\|=1$, we have $|f(y_1)|, |f(y_2)| \leq 1$, and then the triangle inequality implies that
\begin{equation}\label{eq:cmplx-rad2}
f(y_1)=f(y_2)=1.
\end{equation}
Notice also that $y_1$ and $y_2$ must be $\FF$-linearly independent because otherwise $y_2=\mu y_1$ for some unimodular $\mu \neq 1$, and then $\| z \| = \left\|\frac{y_1+y_2}{2}\right\|=\frac{|1+\mu|}{2} < 1$, a contradiction. Therefore, they form a basis for $\FF^2$. It then follows from \eqref{eq:cmplx-rad2} that the supporting functional at $z$  is unique, so $z$ is smooth.
By the hypothesis, the boundary points of $F$ are also smooth. Then,  by (ii) $\Longrightarrow$ (i) of Lemma~\ref{thm5} (whose proof can be easily adapted to work also in nonprojective setting), for every $z,w\in F$ we have $z^\bot=w^\bot$, and it is a one-dimensional subspace of~$\FF^2$ (spanned by, say, $x$), so
\begin{equation}\label{eq:cmplxradon4}
    \bigcup_{z\in F} z^\bot=\FF x.
\end{equation}
Since $\norm_{\mathcal R}$ is a Radon norm, together with \eqref{eq:cmplxradon3} this gives us that $\FF F \subseteq \FF F_x$.

To prove (a), note that, for a nonsmooth point~$x$, the dimension of~$F_x$ is greater than~$0$, so we can apply~\eqref{eq:cmplxradon4} to $F = F_x$.
\end{proof}

\begin{lem}\label{lem:nnnsmoothRadonplaneRR}
    Let ${\mathcal R}:=(\RR^2,\norm_{\mathcal{R}})$ be a real Radon plane and let $\|(x,y)\|_2:=\sqrt{|x|^2+|y|^2}$ be the Euclidean norm.  Then the following conditions are equivalent:
\begin{itemize}
\item[(i)] $\Gamma_0({\mathcal R})$ is isomorphic to $\Gamma_0(\RR^2,\norm_2)$.
\item[(ii)] Both   boundary points of each one-dimensional face in ${\mathcal R}$ are smooth.
\end{itemize}
\end{lem}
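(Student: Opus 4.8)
The plan is to treat the two implications separately, relying throughout on Lemma~\ref{lem:common-part-Radon-planes}. I will use repeatedly that, because $\mathcal R$ is Radon, both $\Gamma_0(\mathcal R)$ and $\Gamma_0(\RR^2,\norm_2)$ are \emph{symmetric} digraphs (so $x\perp y\Leftrightarrow y\perp x$ and hence $x^\bot={}^\bot x$), that $0$ is in each the unique loop vertex and is joined to every other vertex, together with the elementary facts that a planar convex curve has at most countably many corners and at most countably many maximal line segments, while the relative interior of every face consists of smooth points.

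To prove (ii)$\Rightarrow$(i) I would exhibit a (nonhomogeneous) isomorphism through a decomposition into complete bipartite pieces. Granting (ii), Lemma~\ref{lem:common-part-Radon-planes}(ii) gives a clean duality between corners and segments: for a nonsmooth $u\in{\bf S}_{\mathcal R}$ one has $u^\bot=\FF F_u$ with $F_u$ a nondegenerate segment, every point of $F_u$ is BJ-orthogonal \emph{only} to $\FF u$, and conversely every segment is such an $F_u$. Consequently the lines through the origin fall into three types---round (smooth, dual to a smooth line), corner, and segment lines---and the nonzero vertices of $\Gamma_0(\mathcal R)$ \emph{partition} into mutually non-adjacent pieces, each without internal edges: the round pairs $\{\FF v,\FF w\}$ with $v^\bot=\FF w$ and $w^\bot=\FF v$, and the corner--segment units having the punctured line $\FF u\setminus\{0\}$ on one side and the punctured cone $\FF F_u\setminus\{0\}$ on the other. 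Each piece is a complete bipartite graph $K_{\mathfrak c,\mathfrak c}$ on two continuum-sized sides, and every vertex is also joined to $0$. The Euclidean digraph admits the same description with only round pairs present. Both spaces possess continuum many pieces: there are at most countably many corner--segment units, while beyond the smooth endpoint of any segment there is a nondegenerate arc carrying continuum many round lines (continuum many round lines occurring trivially when $\mathcal R$ is rotund). It then suffices to send $0\mapsto 0$, fix any bijection between the two continuum-sized families of pieces, and within each matched pair use any isomorphism $K_{\mathfrak c,\mathfrak c}\to K_{\mathfrak c,\mathfrak c}$.

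For the converse I would argue contrapositively by means of an isomorphism invariant. Say that a digraph has property $(\star)$ if, for every non-loop vertex $x$, all vertices of $x^\bot$ distinct from the loop vertex are pairwise BJ-equivalent (recall $a\simbj b$ iff $a^\bot=b^\bot$ and ${}^\bot a={}^\bot b$); this is plainly preserved by digraph isomorphisms. Since in $\Gamma_0(\RR^2,\norm_2)$ every $x^\bot$ is a single punctured line, that digraph has $(\star)$. Assume now that (ii) fails: some corner $w$ has a dual segment $F_w=w^\bot\cap{\bf S}_{\mathcal R}$ one of whose endpoints, $a$, is nonsmooth. By Lemma~\ref{lem:common-part-Radon-planes}(i) together with Remark~\ref{substitute:lemma2.10}(iii), every interior point $z$ of $F_w$ satisfies $z^\bot=\FF w$, a one-dimensional line, whereas $a^\bot=\FF F_a$ is a two-dimensional cone since $a$ is a corner. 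As $z,a\in w^\bot\setminus\{0\}$ but $z^\bot\ne a^\bot$, these two vertices are not BJ-equivalent, so $\Gamma_0(\mathcal R)$ fails $(\star)$ and therefore cannot be isomorphic to $\Gamma_0(\RR^2,\norm_2)$.

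The main obstacle is the verification underlying (ii)$\Rightarrow$(i): one must check that the pieces are genuinely non-adjacent complete bipartite graphs---this is exactly where the hypothesis that segment points are orthogonal only to their corner (Lemma~\ref{lem:common-part-Radon-planes}(ii)) is indispensable, as it prevents cross-edges and internal edges---and that both digraphs really do carry continuum many pieces, so that the two families can be matched. The abundance of round directions under hypothesis (ii) is the subtle point; it is secured by the elementary count above, the same continuum-versus-countable dichotomy that governs Lemma~\ref{lem:countableperturbation}.
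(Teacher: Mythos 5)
Your overall architecture mirrors the paper's: pair each corner with its dual segment into a mutually orthogonal unit, pair the remaining smooth lines into dual couples, count, and match; and for the converse, exhibit a digraph invariant separating the two graphs. Your $\neg(\mathrm{ii})\Rightarrow\neg(\mathrm{i})$ direction is correct and complete --- it is the paper's argument (that $x_1^\bot$ properly contains $y^\bot$ for the midpoint $y$ of the face, which cannot happen for two nonzero vectors in a Hilbert space) repackaged as the BJ-equivalence invariant $(\star)$; either formulation works.

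The gap is in $(\mathrm{ii})\Rightarrow(\mathrm{i})$, at exactly the step you flag as subtle: you must show that $\Gamma_0(\mathcal R)$ has continuum many ``round'' pairs, and your justification does not establish this. The fact that there are at most countably many corners and at most countably many maximal segments does \emph{not} imply that continuum many lines are round: each maximal segment $F_j$ sweeps out a two-dimensional cone $\RR F_j$, so the union of countably many segments can a priori cover the unit sphere up to a countable set (for the hexagonal norm it covers it entirely, with \emph{no} round lines --- of course (ii) fails there, which is precisely the point: hypothesis (ii) must enter this count in an essential way). Your specific assertion that ``beyond the smooth endpoint of any segment there is a nondegenerate arc carrying continuum many round lines'' is unsupported: maximal segments may accumulate at a smooth endpoint, so the arc just beyond it need not be free of further segments, and nothing you say rules out that those segments exhaust the arc up to countably many points. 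Closing this requires a genuine argument using (ii). The paper parametrizes a portion of the sphere as the graph of a convex function $f$, writes $f'$ as the distribution function of a measure $\mu$, splits off the purely atomic part $\mu_p$ supported on the corners, shows that $\mu-\mu_p$ cannot vanish (else the endpoints of the one-dimensional faces would be nonsmooth, contradicting (ii)), and only then invokes Lemma~\ref{lem:countableperturbation} to get continuum many distinct values of $f'$, i.e.\ continuum many smooth points with pairwise distinct outgoing neighborhoods. (A Baire-category argument on the closed complement of the union of the relative interiors of the maximal segments --- an isolated point of which would be a common endpoint of two maximal segments, hence a corner, contradicting (ii) --- would also do.) Merely invoking ``the same continuum-versus-countable dichotomy'' is not a substitute for this step.
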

\begin{proof}
(ii) $\Longrightarrow$ (i).
  If $\norm_{\mathcal R}$ is smooth, then this follows by Tanaka's~\cite[Theorem~4.7]{Tanaka22}. Otherwise, by Lemma~\ref{lem:common-part-Radon-planes}, for each nonsmooth point $x$ there exists a one-dimensional norm's face $F_x$ such that each point in $F_x$ is BJ-orthogonal only to $\RR x$, and $x$ is BJ-orthogonal only to $\RR F_x$. Conversely, each point of a one-dimensional norm's face $F$ is BJ-orthogonal to a unique one-dimensional subspace spanned by a nonsmooth point. There  are at most countably many nonsmooth points, and hence also the same cardinality of maximal one-dimensional faces; they can be paired into  mutually BJ-orthogonal pairs $(x,F_x)$, so that no other nonzero point in $\mathcal{R}$ is in BJ-orthogonality relation to any nonzero point in $\RR x\cup \RR F_x$.

  We next show that there are continuum many smooth points in the norm's unit sphere, which do not belong to a one-dimensional face. To see this,  note that the  norm's unit sphere  is not a square, so its portion can be parametrized as $(x,f(x))$
for some nonconstant  convex function
$f\colon(\alpha,\beta)\to\RR$.
 It is a classical result~\cite[Theorems~23.1 and~24.1]{Rockafellar} that $f(x)$ has left, $f'_-(x)$, and right, $f'_+(x)$, derivative at every  point of its domain,  they monotonically increase and coincide except possibly at countably many points. The graph of its subdifferential,
$${\mathcal G}_{\partial f}:=
\{(x, x^\ast)\in \RR^2;\; x^\ast\in\RR,\; f'_-(x)\le  x^\ast \le f'_+(x)\},$$
is a continuous curve in $\RR^2$ which may contain horizontal and vertical line segments. Moreover,
 $$(x,x^\ast)\in{\mathcal G}_{\partial f} \quad\hbox{ if and only if }\quad(x,f(x))\perp (1,x^\ast).$$
Indeed, if $(x, f(x))$ is a smooth point, then its supporting functional is given by norm's gradient at this point, which is perpendicular to the tangent vector of the level curve parametrized by $x\mapsto (x,f(x))$. If $(x, f(x))$ is a nonsmooth point, then, by using~\cite[Theorem 25.6]{Rockafellar}, we obtain that its subdifferential is the convex hull of all possible limits of gradients at smooth points on ${\bf S}_{\mathcal R}$ converging to $(x, f(x))$.
It follows that each  vertical segment in ${\mathcal G}_{\partial f}$ corresponds  to a point in $(x,f(x))^\bot$ for a fixed nonsmooth point
$x\in(\alpha,\beta)$, and it also follows that   two points $(x_1,x_1^\ast),(x_2,x_2^\ast)\in{\mathcal G}_{\partial f}$ do not belong to the same horizontal/vertical line segment if and only if $(x_1,f(x_1))^\bot\neq(x_2,f(x_2))^\bot$.

We may clearly assume that  $f'$ is defined also at points where $f$ is not differentiable, by requiring  that $x\mapsto f'(x)$ is right continuous. Since it  is increasing, a quick application of Stieltjes integral  shows that there exists a finite positive measure $\mu$, supported on the  interval $(a,b)$, such that  $f'(x)=\mu((a,x])$ is its cumulative distribution function. Consider a purely atomic measure $\mu_p$, supported on the at most countable set of nonsmooth points
 $x_1,x_2,\dots\in(a,b)$ and defined by $$\mu_s(\{x_i\}):=f'_+(x_i)-f'_-(x_i),$$ i.e., the length  of the vertical line segment of ${\mathcal G}_{\partial f}$  at $X_i$.

 Notice that $\mu-\mu_p$ is a continuous measure. It cannot be zero, because then $f'$ would be a cumulative distribution function of a purely atomic measure $\mu_{p}$, and as such ${\mathcal G}_{\partial f}$ would consist only of horizontal and vertical line segments, so that the end points of one-dimensional norm's faces (which correspond to horizontal line segments of ${\mathcal G}_{\partial f}$) would be nonsmooth, a contradiction. Then $f'$ is a sum of a nonconstant continuous increasing function $g\colon x\mapsto(\mu-\mu_p)((a,x]
 )$ and an increasing  function which is constant except for jumps at $x_1,x_2,\dots$. Thus, by Lemma~\ref{lem:countableperturbation}, $f'(x)$ attains continuum many different values. By the above, this translates to the fact that the norm's unit sphere contain continuum many smooth points $x$ with $x^\bot$ pairwise distinct.

 We can now construct  the graph isomorphism  $\Phi\colon\Gamma_0({\mathcal R})\to \Gamma_0(\RR^2,\norm_2)$ as follows:  Let ${\mathcal SF}$ be a collection of  mutual BJ-orthogonal pairs $(\RR {x_i},\RR F_{x_i})$, where $x_i$ is a nonsmooth point on the norm's unit sphere, and $F_{x_i}$ is the corresponding face. It contains at most countably many pairs. With each such pair we associate a pair of orthogonal points $((c_i,s_i), (-s_i,c_i))\in(\RR^2,\norm_2)$. By the last paragraph, $(\RR^2,\norm_{\mathcal{R}})$  still contains continuum many mutual BJ-orthogonal pairs $(\FF x, \FF x')$ consisting of smooth points.  Then there exist  bijections $\varphi_i\colon \RR F_{x_i}\setminus\{0\}\to\RR\setminus\{0\}$, and also a bijection $\varphi\colon \RR^2\setminus \bigcup_i(\RR x_i\cup \RR F_{x_i})$ onto $\RR^2\setminus\bigcup_i(\RR(c_i,s_i)\cup\RR(-s_i,c_i))$ which maps smooth mutual BJ-orthogonal pairs  onto orthogonal pairs. Hence the mapping $\Phi: (\RR^2,\norm_{\mathcal R}) \to (\RR^2,\norm_2)$ defined by
 \begin{itemize}
\item[(i)] $\Phi(0)=0$,
\item[(ii)] $\Phi(\lambda x_i)= \lambda (c_i,s_i)$ and $ \Phi(\lambda z)=\varphi_i(\lambda z) (-s_i,c_i)$  for $z\in F_{x_i}$ and $\lambda\in\RR\setminus\{0\}$,
\item[(iii)] $\Phi(x)=\varphi(x)$ if $x\notin \bigcup_i (\RR x_i\cup \RR F_{x_i})$
 \end{itemize}
 is a bijection which preserves BJ-orthogonality in both directions, i.e., it is a graph isomorphism between $\Gamma_0(\RR^2,\norm_{\mathcal R})$ and $\Gamma_0(\RR^2,\norm_2)$.
\bigskip

$\neg$(ii) $\Longrightarrow$ $\neg$(i). Suppose that $F=[x_1,x_2]$ is a one-dimensional norm's face with a nonsmooth end point $x_1$. Then $x_1^\bot$ properly contains $y^\bot$, where $y=\frac{x_1+x_2}{2}$ is the midpoint of $F$. Notice that such property cannot take place in Hilbert spaces, so there can be no graph isomorphism between $\Gamma_0(\RR^2,\norm_{\mathcal R})$ and $\Gamma_0(\RR^2,\norm_2)$.
\end{proof}

We have a similar classification for complex Radon planes:
\begin{lem}\label{lem:nnnsmoothRadonplaneCC}
    Suppose ${\mathcal R}:=(\CC^2,\norm_{\mathcal{R}})$ is a complex Radon plane and let $(\CC^2,\norm_2)$ be a Hilbert space with the norm $\|(x,y)\|_2:=\sqrt{|x|^2+|y|^2}$.  Then the following conditions are equivalent:
\begin{itemize}
\item[(i)] $\Gamma_0({\mathcal R})$ is isomorphic to $\Gamma_0(\CC^2,\norm_2)$.
\item[(ii)]  (a) The boundary points of each nonzero-dimensional face in ${\mathcal R}$ are smooth,~and\\ (b) the set $\{x^\bot; \;\;x\in\Gamma_0({\mathcal R})\}$ has continuum many points.
\end{itemize}
\end{lem}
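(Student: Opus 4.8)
The plan is to compare the connected-component structures of the two loopless graphs obtained by deleting the vertex $0$. In both $\Gamma_0(\mathcal{R})$ and $\Gamma_0(\CC^2,\norm_2)$ the vertex $0$ is the unique loop vertex and is joined, in both directions, to every other vertex; hence any graph isomorphism must send $0$ to $0$, and conversely any isomorphism of the induced subgraphs on the nonzero vectors extends to $\Gamma_0$ by $0\mapsto0$. So throughout I would work with the nonzero vectors only.

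For (ii)$\Rightarrow$(i) I would first show, assuming (ii)(a), that $\Gamma_0(\mathcal{R})\setminus\{0\}$ is a disjoint union of complete bipartite graphs, each with both sides of cardinality~$\mathfrak c$. Since smoothness is invariant under nonzero scaling, every complex line $\FF x$ consists entirely of smooth points or entirely of nonsmooth ones. If $\FF x$ is a nonsmooth line, then Lemma~\ref{lem:common-part-Radon-planes}(i) gives $x^\bot=\FF F_x$ with $F_x$ a positive-dimensional face, and (ii)(a) together with Lemma~\ref{lem:common-part-Radon-planes}(ii) yields $z^\bot=\FF x$ for every $z\in F_x$; by the Radon symmetry the set $(\FF x\setminus\{0\})\cup(\FF F_x\setminus\{0\})$ is then a complete bipartite block from which no edge escapes. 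If $\FF z$ is a smooth line with $z^\bot=\FF w$, then either $w$ is smooth and $\{\FF z,\FF w\}$ is a symmetric pair, or $w$ is nonsmooth and $z\in w^\bot=\FF F_w$ places $\FF z$ inside the block centred at $\FF w$. A short check shows these blocks are pairwise disjoint and exhaust the nonzero vectors, and each side is a union of punctured complex lines, hence of cardinality exactly~$\mathfrak c$; thus every block is $K_{\mathfrak c,\mathfrak c}$. Reading off outgoing neighbourhoods, each block contributes precisely the two values $\{\FF F_x,\FF x\}$ (respectively $\{\FF z,\FF w\}$), and distinct blocks contribute distinct values, so if $N$ denotes the number of blocks then $|\{x^\bot; x\in\Gamma_0(\mathcal{R})\}|=2N+1$, which equals $N$ once $N$ is infinite. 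Hence (ii)(b) forces $N=\mathfrak c$. The same analysis applied to $(\CC^2,\norm_2)$, whose orthogonality is symmetric and all of whose points are smooth, shows $\Gamma_0(\CC^2,\norm_2)\setminus\{0\}$ is a disjoint union of exactly $\mathfrak c$ copies of $K_{\mathfrak c,\mathfrak c}$, one per orthogonal pair of complex lines. Matching the $\mathfrak c$ blocks by a bijection and matching the two sides of each block by bijections of continuum sets yields the required isomorphism.

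For (i)$\Rightarrow$(ii) I would fix an isomorphism $\Phi$ and use $\Phi(x^\bot)=\Phi(x)^\bot$: this induces a bijection between $\{x^\bot\}$ in the two graphs, and since the Hilbert graph has continuum many outgoing neighbourhoods, (b) follows. For (a), suppose a nonzero-dimensional face $F$ had a nonsmooth boundary point $x_1$, and pick $y$ in the relative interior of $F$ (a smooth point). Every supporting functional of $y$ also supports $x_1$, while $x_1$ has an extra supporting functional whose kernel is a different complex line, so by~\eqref{eq36} we obtain $y^\bot\subsetneq x_1^\bot$ with both $y$ and $x_1$ nonzero. Transporting this by $\Phi$ would produce a proper inclusion between the outgoing neighbourhoods of two nonzero Hilbert vectors; but these are complex lines of equal real dimension, and no such proper inclusion exists (only $0^\bot=\CC^2$ can properly contain a line), a contradiction.

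The step I expect to be the main obstacle is the decomposition in the second paragraph: proving carefully, from Lemma~\ref{lem:common-part-Radon-planes} and the Radon symmetry, that under (ii)(a) the graph splits into closed complete bipartite blocks --- in particular that each smooth line falls into exactly one block and that the bundles attached to distinct nonsmooth lines are disjoint --- and then checking that the tally of outgoing neighbourhoods ties the number of blocks to the cardinality governed by (ii)(b). Once this structural description is in place, the remainder reduces to the elementary observation that a disjoint union of $\mathfrak c$ copies of $K_{\mathfrak c,\mathfrak c}$, together with an apex vertex joined to everything, is determined up to isomorphism by that data. Note in particular that, in contrast with the real case, (ii)(b) is genuinely needed: it is what supplies the continuum many blocks that the perturbation argument of Lemma~\ref{lem:nnnsmoothRadonplaneRR} produced automatically over $\RR$.
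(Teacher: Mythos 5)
Your proposal is correct and follows essentially the same route as the paper: both directions rest on Lemma~\ref{lem:common-part-Radon-planes}, the decomposition of $\CC^2\setminus\{0\}$ into disjoint mutually orthogonal pairs $(\CC^\ast x,\CC^\ast F_x)$ (your complete bipartite blocks), the count of continuum many such pairs via (ii)(b), and the matching with the analogous partition of $(\CC^2,\norm_2)$; the converse direction likewise uses $\Phi(x^\bot)=\Phi(x)^\bot$ together with the proper inclusion $y^\bot\subsetneq x_1^\bot$ at a nonsmooth boundary point, which is impossible in a Hilbert space. The only cosmetic difference is your explicit tally $|\{x^\bot\}|=2N+1$ linking the number of blocks to condition (ii)(b), which the paper leaves implicit.
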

\begin{proof}
(ii) $\Longrightarrow$ (i).
By Lemma~\ref{lem:common-part-Radon-planes}, for any normalized point~$x$ there exists a norm's face $F_x$ such that $x^\bot=\CC F_x$, and $F_x$ is zero-dimensional if and only if $x$ is a smooth point. Besides, if $x$ is nonsmooth, then each point in~$F_x$ is BJ-orthogonal only to $\CC x$, and $x$ is BJ-orthogonal only to the points in $\CC F_x$. Moreover, if $F$ is a norm's face of dimension greater than~$0$, then $\CC F \subseteq \CC F_x$ for some nonsmooth normalized point $x$. As a consequence,   the whole space~$\CC^2$ decomposes into a  union
\begin{equation}\label{eq:complex-unuion}
\CC^2=\{0\} \cup\bigcup_x (\CC^\ast x\cup\CC^\ast F_x);\qquad \CC^\ast=\CC\setminus\{0\}
\end{equation}
  where the union runs over all $\norm_{\mathcal{R}}$-normalized vectors $x$ which belong to  zero-dimensional maximal faces. Moreover,  the points in $\CC^\ast x$ and in $\CC^\ast F_x$ are mutually BJ-orthogonal, but are  BJ-orthogonal to no other nonzero point and no other nonzero point is BJ-orthogonal to a point in $\CC^\ast x\cup\CC^\ast F_x$. In addition, either $(\CC^\ast x\cup\CC^\ast F_x)$ and $(\CC^\ast y\cup\CC^\ast F_y)$ are equal or they are disjoint.

 Also, by part (b) of assumption (ii)
 there are continuum many such pairs.
 We can now construct  a graph isomorphism  $\Phi\colon\Gamma_0({\mathcal R})\to \Gamma_0(\CC^2,\norm_2)$ as follows. We partition the two-dimensional Hilbert space into continuum many pairwise disjoint pairs, consisting of mutually orthogonal vectors, by the formula
 \begin{equation}\label{eq:CC^2-euclid}
 \CC^2=\{0\}\cup\bigcup_{0\le t,\theta<\pi/2}\Bigl(\CC^\ast (\cos t,e^{4\theta i} \sin t)\cup\CC^\ast (-e^{-4\theta i} \sin t,\cos t)\Bigr).
  \end{equation}
  With each of the continuum many pairwise distinct  pairs $(\CC^\ast x,\CC^\ast F_x)$ that constitute the union in ~\eqref{eq:complex-unuion} we bijectively associate a pair, indexed by $t_x,\theta_x$, from the  union in \eqref{eq:CC^2-euclid}. For any $x$ there exists a bijection $\varphi_x\colon \CC^\ast F_x\to\CC^\ast$. Then the mapping $\Phi: (\CC^2,\norm_{\mathcal R}) \to (\CC^2,\norm_2)$ defined by
 \begin{itemize}
\item[(i)] $\Phi(0)=0$,
\item[(ii)] $\Phi(\lambda x)= \lambda (\cos t_x,e^{4\theta_x i}\sin t_x)$ and $ \Phi(\lambda z)=\varphi_x(\lambda z) (-e^{-4\theta_x i}\sin t_x,\cos t_x)$  for $z\in F_{x}$ and $\lambda\in\CC^\ast$
 \end{itemize}
 is a bijection which preserves BJ-orthogonality in both directions, i.e., it is a graph isomorphism between $\Gamma_0(\CC^2,\norm_{\mathcal R})$ and $\Gamma_0(\CC^2,\norm_2)$.
\bigskip

$\neg$(ii) $\Longrightarrow$ $\neg$(i).  Clearly, the orthogonality graph of the two-dimensional Hilbert space contains continuum many different outgoing neighborhoods. Namely, if $c,s$ are real numbers with $c > 0$ and $c^2+s^2=1$, then $(c,s)^\bot=\CC(-s,c)$ are pairwise distinct. In addition, every  isomorphism  $\Phi\colon\Gamma_0({\mathcal R})\to \Gamma_0(\CC^2,\norm_2)$ satisfies $\Phi(x^\bot)=\Phi(x)^\bot$. So, if part (b) of item (ii) does not hold for a complex Radon plane~${\mathcal R}$, there can be no isomorphism.
It remains to prove that no isomorphism exists in the case when there is a face~$F$ of~${\mathcal R}$ with $\dim_{\mathrm{aff}}F>0$ which contains a nonsmooth point on its relative boundary. Firstly, as shown before, the relative interior of $F$ consists of smooth points only.
Let $w\in F$ be a nonsmooth point on its boundary. Since  $\dim_{\mathrm{aff}}F>0$, there exists a  point $x\in F$ in its relative interior, so $x$ is smooth, and hence $x^\bot$ coincides with the one-dimensional kernel of the supporting functional at~$x$. One can easily adapt the proof of (ii) $\Longrightarrow$ (i) of Lemma~\ref{thm5} to work also in nonprojective setting, and it shows that  $x^\bot$ is properly contained in $w^\bot$. Notice that such property cannot take place in Hilbert spaces, so there can be no graph isomorphism between $\Gamma_0(\CC^2,\norm_{\mathcal R})$ and $\Gamma_0(\CC^2,\norm_2)$.
\end{proof}

An example of a  real Radon  plane $\mathcal R$ whose orthodigraph, $\Gamma_0(\mathcal R)$, has  only finitely many outgoing neighborhoods  is the  Hexagonal norm (it is the  image under a linear bijection of a norm which in the first quadrant coincides with $\norm_{\infty}$ and in the second quadrant coincides with its dual, $\norm_1$). Clearly, in this case $\Gamma_0(\mathcal R)$ cannot be isomorphic to $\Gamma_0(\RR^2,\norm_2)$, because the latter contains uncountably many different outgoing neighborhoods.

\medskip

We next show that there do exist nonsmooth real Radon planes $\mathcal{R}$ such that $\Gamma_0(\mathcal{R})$ and $\Gamma_0(\RR^2, \norm_2)$ are isomorphic.

\begin{ex} \label{example:nonsmooth-Radon-plane}
Consider an arbitrary normed space $(\RR^2, \norm)$ whose unit sphere satisfies the following conditions:
\begin{enumerate}
    \item[(i)] it is symmetric with respect both to the $x$-axis and the $y$-axis;
    \item[(ii)] it contains a nontrivial line segment parallel to the $x$-axis;
    \item[(iii)] it passes through $\pm (1,0)$ and $\pm(0,1)$;
    \item[(iv)] it is smooth at all points, except for, possibly, $\pm (1,0)$.
\end{enumerate}
An example of such a unit sphere is depicted in Figure~\ref{figure:original-norm}. Let us denote $x = (1,0)$ and $y = (0,1)$. Clearly, $x \pperp y$. Hence there exists a Radon plane $(\RR^2, \norm_{\mathcal R})$ such that the first and the third quadrants of its unit sphere coincide with those of $(\RR^2, \norm)$. In other words, only striped areas are changed in Figure~\ref{figure:original-norm}.

The $\perp$ relation is symmetric on $(\RR^2, \norm_{\mathcal R})$. Since the unit sphere is convex, for any $z$ from the horizontal line segment we have $z \perp x$, and thus $x \perp z$. Therefore, $x$ is a nonsmooth point of $(\RR^2, \norm_{\mathcal R})$ (even when it is a smooth point of $(\RR^2, \norm)$). We denote by $z_0$ the rightmost point of the horizontal line segment $I$ in the first quadrant of the unit sphere (if $x$ is a nonsmooth point of the original norm, then $I$ is also extended to the second quadrant).

Assume that there is a nonsmooth point $w \neq \pm x$ on the unit sphere of $(\RR^2, \norm_{\mathcal R})$. Then $w$ must belong to the second or to the fourth quadrant, and there exist linearly independent normalized $u, v$ such that $w \perp u$ and $w \perp v$. Since the unit sphere is convex and $x \perp z_0$, we may assume that $u$ and $v$ belong to the first quadrant and lie between $z_0$ and $x$. By the symmetry of $\perp$ on $(\RR^2, \norm_{\mathcal R})$, we have $u \perp w$ and $v \perp w$. But the norm's unit sphere is strictly convex on the interval between $z_0$ and $x$, so we obtain a contradiction.

Therefore, there are only two types of pairs of BJ-orthogonal lines in $(\RR^2, \norm_{\mathcal R})$:
\begin{enumerate}
    \item[(i)] $\RR x \pperp \RR z$ for any $z$ from the horizontal line segment $I$ of the unit sphere;
    \item[(ii)] all other lines are divided into pairs of mutually BJ-orthogonal lines.
\end{enumerate}

We denote $\RR^* I = \{ \alpha z; \;  \alpha \in \RR^*, \; z \in I \}$. Then it is easy to construct a (nonhomogeneous) bijective mapping $\Phi\colon (\RR^2, \norm_{\mathcal R}) \to (\RR^2, \norm_2)$ which preserves BJ-orthogonality in both directions:
\begin{enumerate}
    \item[(i)] $\Phi(0) = 0$;
    \item[(ii)] $\Phi(\RR^* x) = \RR^*(1,0)$;
    \item[(iii)] $\Phi(\RR^* I) = \RR^*(0,1)$;
    \item[(iv)] $\Phi$  maps the rest of  mutually BJ-orthogonal lines in $(\RR^2, \norm_{\mathcal R})$ bijectively onto the rest of mutually BJ-orthogonal lines in $(\RR^2, \norm_2)$.
\end{enumerate}
Therefore, $\Phi$ is a graph isomorphism between $\Gamma_0(\RR^2,\norm_{\mathcal R})$ and $\Gamma_0(\RR^2,\norm_2)$.
\end{ex}

\noindent\hspace{-0.05\linewidth}
\begin{minipage}{0.54\linewidth}
\begin{figure}[H]
\centering
\includegraphics[width=0.85\linewidth]{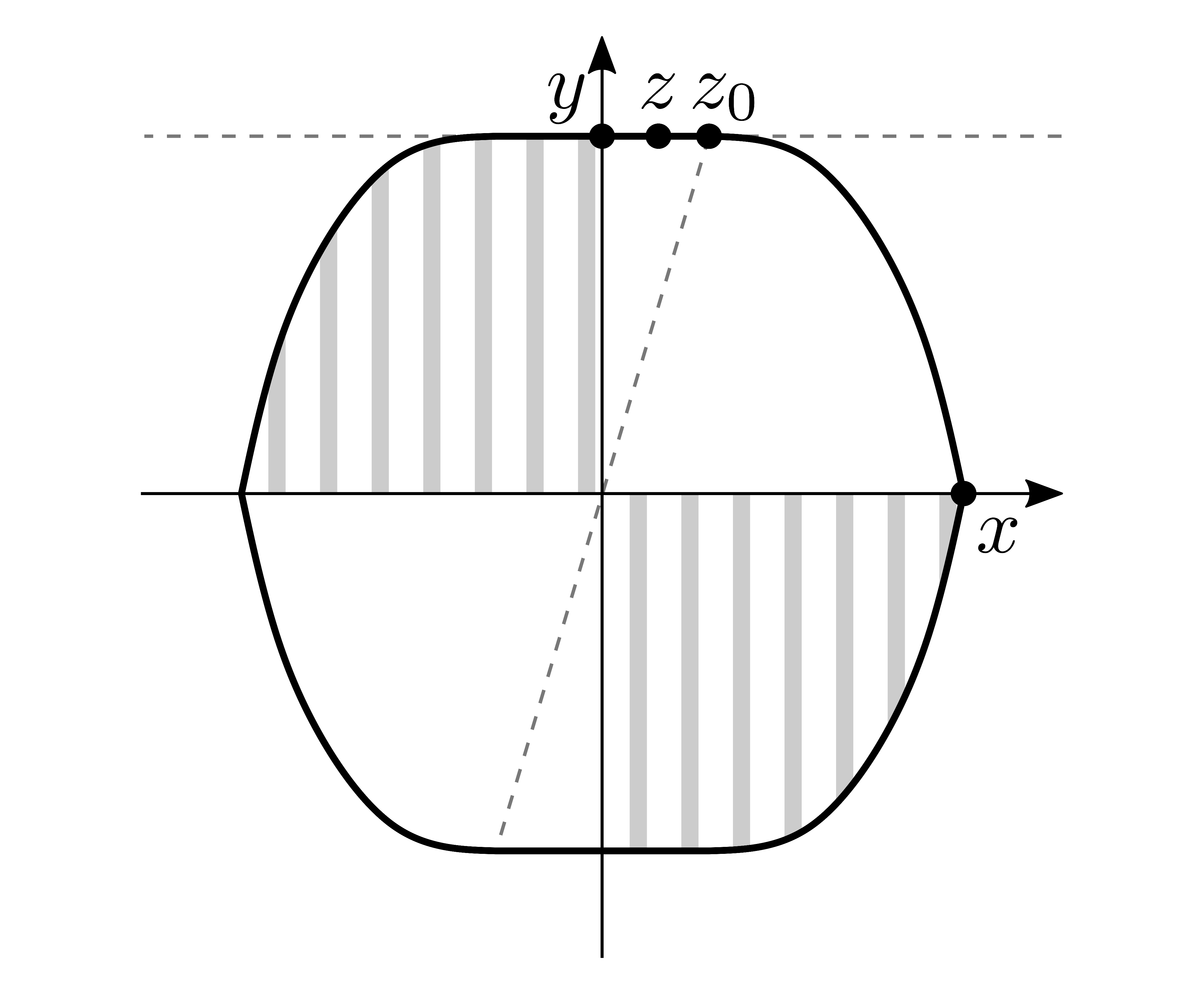}
\caption{\label{figure:original-norm} The original norm on~$\RR^2$.}
\end{figure}
\end{minipage}
\begin{minipage}{0.54\textwidth}
\begin{figure}[H]
\centering
\includegraphics[width=0.85\linewidth]{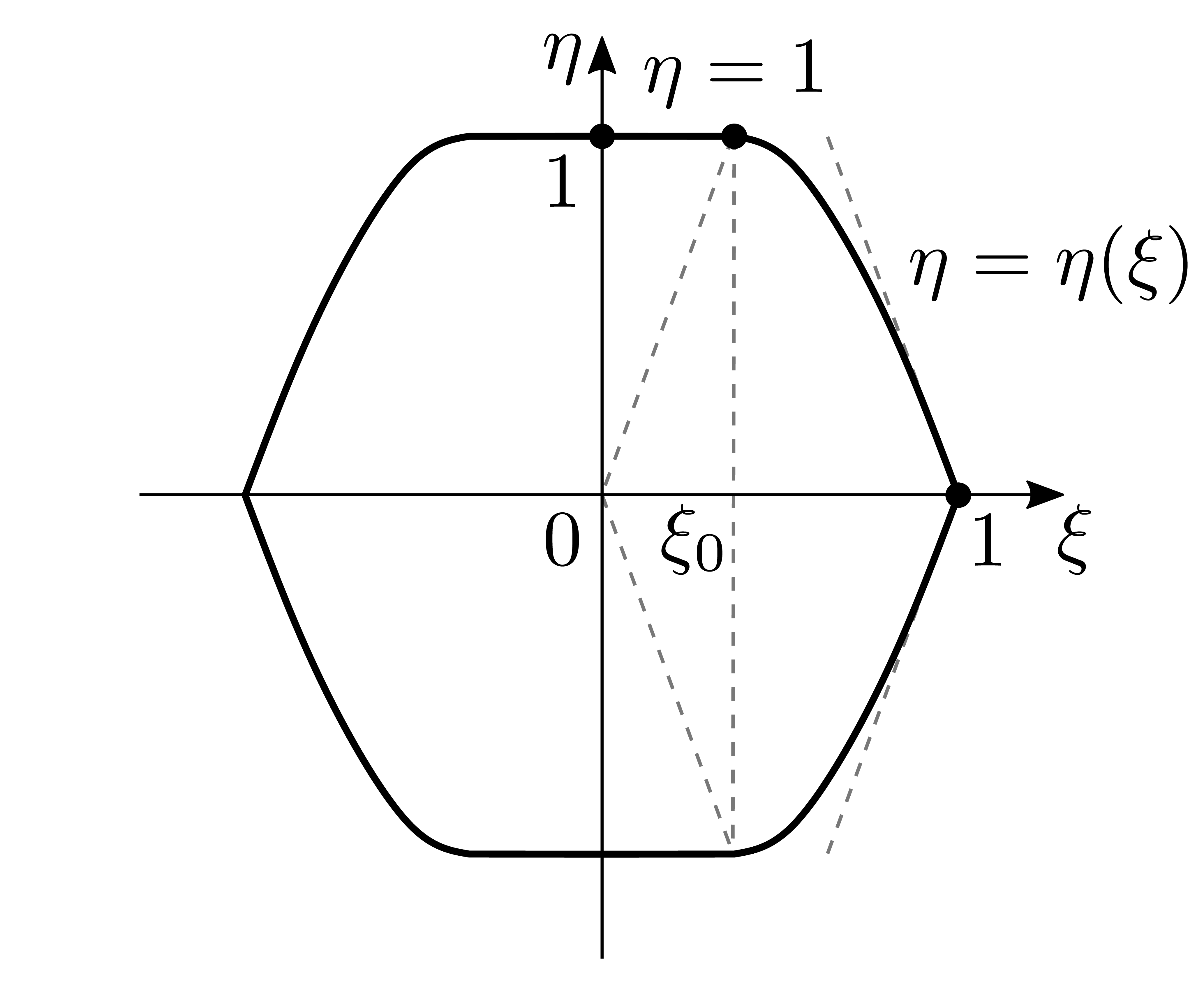}
\caption{\label{figure:nonsmooth-Radon} Absolute nonsmooth Radon plane.}
\end{figure}
\end{minipage}
\smallskip

For our further purposes, we will need an {\em absolute} nonsmooth real Radon plane~$\mathcal{R}$, i.e., a Radon plane such that its unit ball is nonsmooth but symmetric with respect both to the $x$-axis and the $y$-axis. The latter condition is equivalent to the fact that the norm on $\mathcal{R}$ satisfies the equality
$$
\| (a,b) \|_{\mathcal R} = \| (|a|, |b|) \|_{\mathcal R};\qquad (a,b) \in \RR^2.
$$
Note that the following example is a particular case of Example~\ref{example:nonsmooth-Radon-plane}.

\begin{ex} \label{example:absolute-Radon-plane}
Let $\cX$ be an absolute nonsmooth two-dimensional real normed space whose unit sphere is given by Figure~\ref{figure:nonsmooth-Radon} for some unknown decreasing smooth function $\eta(\xi)$ and unknown value $0 < \xi_0 < 1$. We now write down a system of equations which guarantees that $\cX$ is a Radon plane and present one of its solutions.

Consider  any  vector on the unit sphere $v = (\xi, \eta(\xi))$, $\xi_0 < \xi < 1$.  Recall that the supporting functional at $v$ equals the norm's gradient at~$v$ which is always perpendicular to the tangent vector of the norm's unit sphere. Therefore,  $v^\bot = \RR (1, \eta'(\xi))$, so $v$ is BJ-orthogonal to a unit vector $u = (\alpha(\xi), \alpha(\xi) \eta'(\xi))$, $\xi_0 < \alpha(\xi) < 1$, which belongs to the fourth quadrant. Since BJ-orthogonality on any Radon plane is symmetric and $u$ is a smooth point of the norm, we have $u^\bot = \RR v$, so the symmetry of our Radon plane with respect to the $x$-axis implies that $\alpha(\alpha(\xi)) = \xi$. By the continuity, $\alpha$ satisfies the boundary conditions $\alpha(\xi_0) = 1$ and $\alpha(1) = \xi_0$. For instance, we can take $\alpha(\xi) = \xi_0/\xi$.

Since $u$ is a unit vector from the fourth quadrant, we have
\begin{equation} \label{equation:nosmooth-1}
    \eta(\alpha(\xi)) = -\alpha(\xi) \eta'(\xi).
\end{equation}
Then $\alpha(\alpha(\xi)) = \xi$ implies that
\begin{equation} \label{equation:nosmooth-2}
    \eta(\xi) = -\xi \eta'(\alpha(\xi)).
\end{equation}
We now differentiate \eqref{equation:nosmooth-1}, multiply it by $\xi$ and substitute \eqref{equation:nosmooth-2} to obtain
\begin{equation} \label{equation:nosmooth-3}
    \xi \alpha(\xi) \eta''(\xi) + \alpha'(\xi) (\xi \eta'(\xi) - \eta(\xi)) = 0.
\end{equation}
By substituting $\alpha(\xi) = \xi_0/\xi$ and multiplying the resulting equation by $\xi^2 / \xi_0$, we get
\begin{equation} \label{equation:nosmooth-4}
    \xi^2 \eta''(\xi) - \xi \eta'(\xi) + \eta(\xi) = 0.
\end{equation}
Its general solution is $\eta(\xi) = c_1 \xi + c_2 \xi \ln \xi$, $c_1, c_2 \in \RR$. But $\eta(1) = 0$, so $c_1 = 0$. Since $\eta'(\xi_0) = 0$, we then have $\xi_0 = 1/e$, and $\eta(\xi_0) = 1$ implies that $c_2 = -e$. Thus $\eta(\xi) = -e \, \xi \ln \xi$. One can verify that $\eta(\xi)$ indeed satisfies \eqref{equation:nosmooth-1}.
\end{ex}

Example~\ref{example:absolute-Radon-plane} allows us to construct a nonsmooth {\em complex} Radon plane. The first examples of complex Radon planes were given by Oman~\cite[pp.~43--48, Constructions~III,~IV and~V]{Oman}, though he did not obtain a complete characterization for them. His constructions use the following result:

\begin{lem}[{\cite[Theorem~3.4]{Oman}}] \label{lemma:Radon-planes-criterion}
A two-dimensional normed space $\cX$ over $\FF \in \{ \RR, \CC \}$ is a Radon plane if and only if there exists an isometry $\Phi\colon\cX\to\cX^\ast$ such that $\Phi(x)(x)=0$, $x\in\cX$.
\end{lem}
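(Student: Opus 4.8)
The plan is to reduce the statement to a single concrete candidate for $\Phi$ and then treat the two implications separately. Throughout I interpret ``isometry'' as an $\FF$-linear isometry (if one only assumes a surjective distance-preserving map, then Mazur--Ulam makes it affine and evaluating the identity $\Phi(x)(x)=0$ at $x$ and $-x$ forces it to fix the origin and be linear). The key algebraic observation is that a linear map $\Phi\colon\cX\to\cX^\ast$ satisfies $\Phi(x)(x)=0$ for every $x$ if and only if the form $B(x,y):=\Phi(x)(y)$ is \emph{alternating}: expanding $B(x+y,x+y)=0$ gives $B(x,y)=-B(y,x)$. Since $\dim\cX=2$, every alternating $\FF$-bilinear form is a scalar multiple of the determinant form $\det(\cdot,\cdot)$ on $\FF^2$, so the only candidates are $\Phi_c(x):=c\,\det(x,\cdot)$. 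As $\|\Phi_c(x)\|_{\cX^\ast}=|c|\,\sup_{\|y\|\le1}|\det(x,y)|$, the lemma reduces to the single assertion that $\cX$ is a Radon plane if and only if $N(x):=\sup_{\|y\|\le1}|\det(x,y)|$ is a constant multiple of $\|x\|$.

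For the implication ``isometry exists $\Rightarrow$ Radon'' I would argue as follows. By the observation above, $|\Phi(x)(y)|=|\Phi(y)(x)|$ for all $x,y$. Suppose $x\perp y$. By \cite[Theorem~2.1]{James1} there is a norm-one supporting functional $f$ of $x$ with $f(y)=0$; as $\dim\cX=2$ this means $\Ker f=\FF y$. Since $\Phi(y)(y)=0$ we also have $\Ker\Phi(y)=\FF y$, so $\Phi(y)$ and $f$ are proportional; comparing norms ($\|\Phi(y)\|_{\cX^\ast}=\|y\|$, $\|f\|_{\cX^\ast}=1$) gives $\Phi(y)=\|y\|\mu f$ with $|\mu|=1$, whence $|\Phi(y)(x)|=\|y\|\,|f(x)|=\|x\|\,\|y\|$. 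By the alternating symmetry, $|\Phi(x)(y)|=\|x\|\,\|y\|=\|\Phi(x)\|_{\cX^\ast}\|y\|$, so the normalized functional $\Phi(x)/\|x\|$ attains its norm at $y$, i.e.\ is a supporting functional of $y$, and it annihilates $x$ because $\Phi(x)(x)=0$. By \cite[Theorem~2.1]{James1} this yields $y\perp x$, so $\perp$ is symmetric and $\cX$ is a Radon plane.

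For the converse implication I would show that symmetry of $\perp$ forces $N(x)=c^{-1}\|x\|$. Fix a unit vector $x$ and let $y_x$ be a unit vector attaining $N(x)=|\det(x,y_x)|$; then $\det(x,\cdot)/N(x)$ is a supporting functional of $y_x$ that annihilates $x$ (as $\det(x,x)=0$), so $y_x\perp x$. Radon symmetry gives $x\perp y_x$, so some norm-one functional $g$ with $\Ker g=\FF y_x$ supports $x$; but $\det(y_x,\cdot)$ also has kernel $\FF y_x$, hence is proportional to $g$, and therefore $|\det(y_x,x)|=N(y_x)$. Combining, $N(x)=|\det(x,y_x)|=|\det(y_x,x)|=N(y_x)$, so $N$ takes equal values on each BJ-orthogonal pair $(x,y_x)$.

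The remaining—and main—difficulty is to upgrade this pairwise equality to \emph{global} constancy of $N$ on the unit sphere, which is precisely the metric self-duality of a Radon curve: the unit ball $\mathbf B$ must be homothetic to the $90^\circ$ rotation of its polar $\mathbf B^\circ\subseteq\cX^\ast$ (identified with $\FF^2$ via $\det$). I would obtain this either by invoking the classical planar characterization of symmetric Birkhoff--James orthogonality through Radon curves, or directly: parametrize $\partial\mathbf B$ by $x$ with normalized supporting functional $g_x$, note that the BJ-orthogonal direction to $x$ is $R\,g_x$ where $R$ is the $90^\circ$ rotation, and show that Radon symmetry makes $x\mapsto R\,g_x$ an involution whose interplay with the polar duality forces the radial function of $\mathbf B^\circ$ (pulled back by $\det$) to be a fixed multiple of that of $\mathbf B$. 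Once $N\equiv c^{-1}$, the map $\Phi_c(x)=c\,\det(x,\cdot)$ is an $\FF$-linear isometry with $\Phi_c(x)(x)=0$, completing the proof. I expect this self-duality step—rather than the algebra or the easy converse—to be the crux, since the pairwise identity $N(x)=N(y_x)$ alone only controls $N$ along the orbits $\{x,y_x\}$ of the orthogonality involution.
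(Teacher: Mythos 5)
The paper does not actually prove this lemma: it is imported wholesale from Oman's thesis \cite[Theorem~3.4]{Oman}, so there is no in-paper argument to measure your route against and I can only judge the proposal on its own terms. Your algebraic reduction (a linear $\Phi$ with $\Phi(x)(x)\equiv 0$ has an alternating associated bilinear form, hence in dimension two $\Phi(x)=c\det(x,\cdot)$ and the whole lemma becomes the statement that the antinorm $N(x):=\sup_{\|y\|\le 1}|\det(x,y)|$ is a constant multiple of $\norm$) is correct, and your proof of the implication ``isometry exists $\Rightarrow$ $\perp$ is symmetric'' is complete and sound.

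The converse, however, has a genuine gap exactly where you flag it, and that gap is the entire content of the hard direction. What you actually establish is $N(x)=N(y_x)$ for each BJ-orthogonal pair of unit vectors, i.e.\ that $N$ is constant on the orbits of the orthogonality relation on the projective unit sphere. But in a smooth rotund plane that relation is a fixed-point-free involution -- a perfect matching into two-element orbits -- and a function constant on the blocks of a perfect matching is nowhere near constant; so the pairwise identity carries essentially no global information (you cannot even conclude that $\max_S N=\min_S N$). The deferred step -- that symmetry of $\perp$ forces $N\equiv c^{-1}\norm$, equivalently that the unit ball is homothetic to the rotated polar of itself -- is not a routine upgrade but is precisely the classical Radon-curve theorem, and it requires a separate idea: Day's gluing of a quarter-arc of a sphere with a quarter-arc of its dual (quoted in Remark~\ref{remark:Radon-planes}), an area/monotonicity argument for the duality map, or an ODE as in Example~\ref{example:absolute-Radon-plane}. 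None of these is carried out. Moreover, your proposed fix (``invoke the classical planar characterization through Radon curves'') is an intrinsically real-two-dimensional argument: for $\FF=\CC$ the unit sphere is three-real-dimensional, there is no $90^\circ$ rotation of a boundary curve to speak of, and handling this case is exactly why the paper cites Oman rather than Day. So the proposal proves one implication and correctly isolates, but does not resolve, the crux of the other.
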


\begin{lem}[{\cite[Theorems~3.4 and~3.5]{Oman}}] \label{lemma:orthogonality-criterion}
Let ${\mathcal R}=(\FF^2,\norm_{\mathcal R})$ be a Radon plane over $\FF \in \{ \RR, \CC \}$, $x, y \in \mathcal R$, $x \perp y$ and $\| x \|_{\mathcal R} = \| y \|_{\mathcal R} = 1$. Then for any $a,b,c,d \in \FF$ we have $|ad - bc| \leq \| ax + by \|_{\mathcal R} \cdot \| cx + dy \|_{\mathcal R}$, and the inequality becomes an equality if and only if $ax + by \perp cx + dy$.
\end{lem}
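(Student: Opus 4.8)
The plan is to deduce everything from the isometry supplied by Lemma~\ref{lemma:Radon-planes-criterion}. Since $\mathcal R$ is a Radon plane, there is an isometry $\Phi\colon\mathcal R\to\mathcal R^\ast$ with $\Phi(v)(v)=0$ for every $v$, which I take to be $\FF$-linear (the delicacy of this choice over $\CC$ is addressed below). I would then introduce the bilinear form $B(u,w):=\Phi(u)(w)$. Polarizing $\Phi(v)(v)=0$, that is, expanding $\Phi(u+w)(u+w)=0$ and using $B(u,u)=B(w,w)=0$, shows that $B$ is alternating, $B(u,w)=-B(w,u)$. As $x\perp y$ forces $x,y$ to be linearly independent (otherwise $x\perp x$ and $x=0$), the pair $\{x,y\}$ is a basis, and bilinearity together with $B(x,x)=B(y,y)=0$ gives
\begin{equation*}
B(ax+by,\;cx+dy)=(ad-bc)\,B(x,y).
\end{equation*}

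For the inequality I would use that $\Phi$ is an isometry, so $\|\Phi(u)\|=\|u\|$ and hence $|B(u,w)|=|\Phi(u)(w)|\le\|\Phi(u)\|\,\|w\|=\|u\|\,\|w\|$ for all $u,w$. It then suffices to compute the normalizing constant $|B(x,y)|$, and this is exactly where the hypotheses $x\perp y$ and $\|x\|=\|y\|=1$ are used: I would establish the general equivalence, valid for all nonzero $u,w$,
\begin{equation*}
|B(u,w)|=\|u\|\,\|w\|\quad\Longleftrightarrow\quad u\perp w .
\end{equation*}
Applied to $x,y$ this yields $|B(x,y)|=1$, whence $|ad-bc|=|B(u,w)|\le\|ax+by\|_{\mathcal R}\,\|cx+dy\|_{\mathcal R}$, with equality precisely when $ax+by\perp cx+dy$.

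The heart of the argument is thus the displayed equivalence. Equality $|\Phi(u)(w)|=\|\Phi(u)\|\,\|w\|$ says that, after rescaling $\Phi(u)$ to unit norm and multiplying by a suitable unimodular scalar, one obtains a norm-one supporting functional at $w$; since $\Phi(u)$ annihilates $u$, so does this functional, and \cite[Theorem~2.1]{James1} (equivalently \eqref{eq36}) gives $w\perp u$, which the Radon symmetry promotes to $u\perp w$. Conversely, if $u\perp w$ then $w\perp u$, so some norm-one supporting functional $g$ at $w$ kills $u$; because the functionals annihilating the line $\FF u$ form a one-dimensional subspace of $\mathcal R^\ast$, $g$ is a unimodular multiple of $\Phi(u)/\|u\|$, and evaluating at $w$ recovers the equality. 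The degenerate cases $u=0$ or $w=0$ are trivial, since then both sides vanish and $0\perp w$ holds automatically.

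I expect the main obstacle to lie in the complex case. Over $\RR$ the form $B$ is automatically alternating $\RR$-bilinear and the expansion above is routine. Over $\CC$, however, the identity $B(ax+by,cx+dy)=(ad-bc)B(x,y)$ requires $B$ to be genuinely $\CC$-bilinear, that is, the isometry $\Phi$ must be chosen $\CC$-linear and not merely $\RR$-linear; a conjugate-linear $\Phi$ would replace $ad-bc$ by $\bar a d-\bar b c$ and destroy the formula. I would therefore verify that Oman's isometry can be taken $\CC$-linear, as it can in the Hilbert model $(\CC^2,\|\cdot\|_2)$ where $\Phi(v)\colon w\mapsto v_2w_1-v_1w_2$ works, and I would keep careful track of the unimodular scalars that appear in the equality criterion, which cannot be avoided in the complex setting.
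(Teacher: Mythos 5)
Your proposal is correct. Note that the paper does not prove this statement at all: it is quoted verbatim from Oman's thesis (Theorems~3.4 and~3.5), so there is no in-paper argument to compare against. Your derivation of the inequality from Lemma~\ref{lemma:Radon-planes-criterion} via the alternating bilinear form $B(u,w)=\Phi(u)(w)$ is the natural route (and almost certainly Oman's own, since his Theorem~3.5 follows his Theorem~3.4), and every step checks out: the polarization giving $B(u,w)=-B(w,u)$, the identity $B(ax+by,cx+dy)=(ad-bc)B(x,y)$ from $B(x,x)=B(y,y)=0$, the bound $|B(u,w)|\le\|u\|\,\|w\|$ from the isometry property, and the equivalence $|B(u,w)|=\|u\|\,\|w\|\iff u\perp w$ via James's supporting-functional characterization, the one-dimensionality of the annihilator of $\FF u$ in a two-dimensional dual, and Radon symmetry.

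The one caveat you rightly flag --- that $\Phi$ must be $\CC$-linear rather than merely $\RR$-linear for the formula $(ad-bc)B(x,y)$ to survive over $\CC$ --- is resolved by the paper's own usage: in Example~\ref{example:complex-Radon-plane} the isometry $\Phi((c,d))=\widehat{(-d,c)}$ furnished by Lemma~\ref{lemma:Radon-planes-criterion} is manifestly $\CC$-linear, and indeed a conjugate-linear $\Phi$ would make $B$ sesquilinear, in which case $B(v,v)\equiv 0$ would force $B\equiv 0$ over $\CC$, contradicting injectivity of $\Phi$; so the $\CC$-linear reading of the criterion is the only consistent one. No gap remains.
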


\begin{ex} \label{example:complex-Radon-plane}
Let $(\RR^2, \norm_{\mathcal R})$ be an absolute nonsmooth Radon plane from Example~\ref{example:absolute-Radon-plane}, and consider $\cX = \CC^2$ with the norm defined by
$$
\| (a,b) \|_{\cX} := \| (|a|, |b|) \|_{\mathcal R}.
$$
By~\cite[pp.~45--48, Constructions~IV and~V]{Oman}, this is indeed a normed space. Any linear functional in $\cX^*$ can be identified with $\widehat{(c, d)}$ for a certain $(c, d) \in \CC^2$ whose action is given by $\widehat{(c, d)}(a, b) = ca + db$. Then $\Ker \widehat{(-d, c)} = \CC (c,d)$ and $\| \widehat{(-d, c)} \|_{\cX^*} = \| (c,d) \|_{\cX}$, since for any $(a,b) \in \cX$ we have
\begin{align*}
|\widehat{(-d, c)}(a, b)| &= |-da + cb| \leq |da| + |cb|\\
&= |d| |a| + |c| |b| \leq \| (|c|, |d|) \|_{\mathcal R} \cdot \| (|a|, -|b|) \|_{\mathcal R}\\
&= \| (|c|, |d|) \|_{\mathcal R} \cdot \| (|a|, |b|) \|_{\mathcal R} = \| (c,d) \|_{\cX} \cdot \| (a,b) \|_{\cX}.
\end{align*}
The inequality in the second line follows from Lemma~\ref{lemma:orthogonality-criterion}. The norm of $\widehat{(-d, c)}$ is achieved on $(a, b) \in \cX$ if and only if $(|a|, -|b|) \perp (|c|, |d|)$ in $(\RR^2, \norm_{\mathcal R})$ and $\Arg (-da) = \Arg (cb)$, where $\Arg(\xi)$ denotes the argument of $\xi \in \CC$. We also assume here that $\Arg(0) = \Arg(\xi)$ for any $\xi \in \CC$.

Hence the mapping $\Phi\colon \cX \to \cX^*$ defined by $\Phi((c,d)) = \widehat{(-d, c)}$ is an isometry which satisfies Lemma~\ref{lemma:Radon-planes-criterion}, so $\cX$ is a complex Radon plane. Besides, $(a,b) \perp (c,d)$ in $\cX$ if and only if $(|a|, -|b|) \perp (|c|, |d|)$ and $\Arg (-da) = \Arg (cb)$.

Let us denote $x = (1,0)$ and $I = \{ (\xi, 1); \; \xi \in \CC, \; |\xi| \leq 1/e \}$. There are again only two types of pairs of BJ-orthogonal lines in $\cX$:
\begin{enumerate}
    \item[(i)] $\CC x \pperp \CC z$ for any $z \in I$;
    \item[(ii)] all other lines are divided into pairs of mutually BJ-orthogonal lines.
\end{enumerate}
Clearly, this means that $x$ is a nonsmooth point of $\cX$, so $\cX$ is a nonsmooth complex Radon plane. Similarly to Example~\ref{example:nonsmooth-Radon-plane}, we can construct a (nonhomogeneous) bijective mapping $\Phi\colon \cX \to (\CC^2, \norm_2)$ which preserves BJ-orthogonality in both directions, and this mapping is an isomorphism between $\Gamma_0(\cX)$ and $\Gamma_0(\CC^2, \norm_2)$.
\end{ex}

By using Examples~\ref{example:nonsmooth-Radon-plane} and~\ref{example:complex-Radon-plane} we can also show that the assumption that~$\cX$ is a BJ-normed space is essential in Corollary~\ref{corollary:high-dimensional}.
Let $\cX$ be a complex normed space, $x, y \in \cX$. We denote $x \perp_\RR y$ if $x$ and $y$ are BJ-orthogonal as elements of the real normed space~$\cX_\RR$, which is obtained from $\cX$ by restricting the scalars to the field of real numbers.

\begin{lem} \label{lemma:direct-sum}
Let $\cX$ and $\cY$ be normed spaces over $\FF$, and $\cZ = \cX \oplus_2 \cY$ with $\| (x, y) \|_{\cZ} = \sqrt{\| x \|_{\cX}^2 + \| y \|_{\cY}^2}$. Then for any nonzero $x \in \cX$ we have
\begin{align*}
    (x,0)^\bot &= x^\bot \oplus \cY,\\
    {}^\bot(x,0) &= {}^\bot x \oplus \cY.
\end{align*}
\end{lem}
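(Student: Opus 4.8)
The plan is to reduce both identities to the functional characterization of Birkhoff--James orthogonality recorded in \eqref{eq36} (and the underlying result from \cite[Theorem~2.1]{James1}), together with the observation that the dual of an $\ell_2$-sum is again an $\ell_2$-sum. Writing a functional on $\cZ$ as a pair $(f,g) \in \cX^* \oplus \cY^*$ acting by $(f,g)(a,b) = f(a) + g(b)$, a Cauchy--Schwarz estimate gives $\|(f,g)\|_{\cZ^*} = \sqrt{\|f\|_{\cX^*}^2 + \|g\|_{\cY^*}^2}$, and I would record this at the outset since it drives the description of supporting functionals.

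For the incoming neighborhood I would argue directly from the definition and avoid functionals altogether: since the $\cY$-coordinate of $(x,0)$ is zero, for every $\lambda \in \FF$ we have $\|(u,v) + \lambda(x,0)\|_{\cZ}^2 = \|u + \lambda x\|_{\cX}^2 + \|v\|_{\cY}^2$, so the term $\|v\|_{\cY}^2$ occurs identically on both sides of the defining inequality $\|(u,v)+\lambda(x,0)\|_{\cZ} \ge \|(u,v)\|_{\cZ}$. After cancellation this inequality becomes $\|u + \lambda x\|_{\cX} \ge \|u\|_{\cX}$ for all $\lambda$, i.e.\ $u \perp x$. Hence $(u,v) \in {}^\bot(x,0)$ if and only if $u \in {}^\bot x$, which is exactly ${}^\bot(x,0) = {}^\bot x \oplus \cY$.

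For the outgoing neighborhood I would invoke \eqref{eq36}, so that the task is to compute $\partial\|(x,0)\|$. Assuming $\|x\|_{\cX} = 1$ (legitimate by homogeneity), a norm-one functional $(f,g)$ supports $(x,0)$ precisely when $f(x) = \|(x,0)\|_{\cZ} = 1$; since $1 = f(x) \le \|f\|_{\cX^*}$ while $\|f\|_{\cX^*}^2 + \|g\|_{\cY^*}^2 = 1$, we are forced to have $\|f\|_{\cX^*} = 1$, $g = 0$, and $f \in \partial\|x\|$. Thus $\partial\|(x,0)\| = \{(f,0) : f \in \partial\|x\|\}$, and since $\Ker(f,0) = \Ker f \oplus \cY$, taking the union over $f \in \partial\|x\|$ and applying \eqref{eq36} gives $(x,0)^\bot = \bigcup_{f \in \partial\|x\|}(\Ker f \oplus \cY) = x^\bot \oplus \cY$.

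The main obstacle is precisely this outgoing direction, namely the vanishing of the $\cY^*$-component of every supporting functional at $(x,0)$; this is where the Hilbertian ($\ell_2$) nature of the direct sum is essential, and it is exactly what the dual-norm computation delivers. I note that one can alternatively bypass the dual space and prove $(x,0)\perp(u,v) \Rightarrow x \perp u$ by contradiction: if $\|x + \lambda_0 u\|_{\cX} < \|x\|_{\cX}$ for some $\lambda_0$, then convexity of the norm yields $\|x\|_{\cX}^2 - \|x + t\lambda_0 u\|_{\cX}^2 = O(t)$ while the unwanted term $|t\lambda_0|^2\|v\|_{\cY}^2 = O(t^2)$, so for small $t$ one violates $\|x + t\lambda_0 u\|_{\cX}^2 + |t\lambda_0|^2\|v\|_{\cY}^2 \ge \|x\|_{\cX}^2$; the reverse implication is immediate, since under $x \perp u$ the inequality $\|x+\lambda u\|_{\cX} \ge \|x\|_{\cX}$ only strengthens after adding the nonnegative term.
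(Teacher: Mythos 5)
Your proof is correct, but for the outgoing neighborhood it takes a genuinely different route from the paper. The paper works with one-sided directional derivatives: it invokes \cite[Theorem~3.2]{James1} ($u\perp_\RR v$ iff $D_-(u;v)\le 0\le D_+(u;v)$), computes directly that $D_\pm((x,0);(y,z))=D_\pm(x;y)$ because the extra term $t^2\|z\|_\cY^2$ under the square root is of second order, and then handles the complex case by the reduction $u\perp v \iff u\perp_\RR \lambda v$ for all $\lambda\in\CC$. Your argument instead identifies $\cZ^*$ with $\cX^*\oplus_2\cY^*$, shows that every supporting functional at $(x,0)$ has the form $(f,0)$ with $f\in\partial\|x\|$ (the constraint $\|f\|_{\cX^*}^2+\|g\|_{\cY^*}^2=1$ together with $f(x)=1$ forces $g=0$), and concludes via \eqref{eq36}. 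This buys a uniform treatment of $\RR$ and $\CC$ with no separate complex-case reduction, at the modest cost of verifying the dual-norm formula (the upper bound is Cauchy--Schwarz; the lower bound needs the standard approximate-attainment argument, which you should at least gesture at). Your closing remark --- the $O(t)$ versus $O(t^2)$ contradiction via convexity --- is essentially the paper's directional-derivative computation in disguise. For the incoming neighborhood both you and the paper argue identically, by cancelling the $\|v\|_\cY^2$ term on both sides of the defining inequality.
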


\begin{proof}
Let us denote the norm's directional derivatives at $u$ in direction of $v$ by
$$
D_-(u;v):=\lim_{t\nearrow 0}\frac{\|u+tv\|-\|u\|}{t}\quad\hbox{ and }\quad D_+(u;v):=\lim_{t\searrow 0}\frac{\|u+tv\|-\|u\|}{t}.
$$
By~\cite[Theorem~3.2]{James1}, we have $u \perp_{\RR} v$ if and only if $D_-(u;v) \leq 0 \leq D_+(u;v)$.  Now note that, by continuity of the norm,
\begin{align*}
D_\pm((x,0);(y,z)) &= \lim_{t\to \pm 0}\frac{\|(x + ty, tz)\|_{\cZ}-\|(x,0)\|_{\cZ}}{t}\\
&= \lim_{t\to \pm 0}\frac{\sqrt{\| x+ty \|_{\cX}^2 + \| tz \|_{\cY}^2}-\|x\|_{\cX}}{t}\\
&= \lim_{t\to \pm 0}\frac{\| x+ty \|_{\cX}^2 + t^2\| z \|_{\cY}^2-\|x\|_{\cX}^2}{t\left(\sqrt{\| x+ty \|_{\cX}^2 + \| tz \|_{\cY}^2} + \|x\|_{\cX}\right)}\\
&= \lim_{t\to \pm 0}\frac{(\| x+ty \|_{\cX}-\|x\|_{\cX}) (\| x+ty \|_{\cX} + \|x\|_{\cX})}{2 t \|x\|_{\cX}}\\
&= \lim_{t\to \pm 0}\frac{\| x+ty \|_{\cX}-\|x\|_{\cX}}{t} = D_\pm(x;y).
\end{align*}
Hence $(x,0) \perp_{\RR} (y, z)$ if and only if $x \perp_{\RR} y$. In the complex case, we use a general result that $u\perp v$ if and only if $u\perp_{\RR}\lambda v$ for all $\lambda \in \CC$.  Then $(x,0) \perp (y,z)$ if and only if  $(x,0) \perp_\RR \lambda (y,z)=(\lambda y,\lambda z)$ for all $\lambda \in \CC$ which, by above,  is equivalent to $x\perp_\RR \lambda y$ for all $\lambda \in \CC$, and then to $x\perp y$.

For the second part of the statement, note that $(y,z) \perp (x,0)$ if and only if $\sqrt{\| y \|_{\cX}^2 + \| z \|_{\cY}^2} = \| (y, z) \|_{\cZ} \leq \| (y, z) + \lambda (x,0) \|_{\cZ} = \sqrt{\| y + \lambda x \|_{\cX}^2 + \| z \|_{\cY}^2}$ for all $\lambda \in \FF$. Clearly, this is equivalent to $\| y \|_{\cX} \leq \| y + \lambda x \|_{\cX}$ for all $\lambda \in \FF$, that is, to  $y \perp x$.
\end{proof}

\begin{ex} \label{example:nonsmooth-point}
Consider $\cX = (\FF^2, \norm_{\mathcal R})$ from Example~\ref{example:nonsmooth-Radon-plane} or~\ref{example:complex-Radon-plane}, $\cY = (\FF^n, \norm_2)$ with $n \in \mathbb{N}$, and $\cZ = \cX \oplus_2 \cY$. Then $\dim \cZ = 2 + n \geq 3$.

We denote $\tilde{w} = (w, 0) \in \cZ$ for any $w \in \cX$. Since $\tilde{x} = (x,0)$ is a nonsmooth point in $\cX \oplus \{ 0 \}$, it is also a nonsmooth point in $\cZ$.

Let $e_1, \dots, e_n$ be the standard basis in $\cY$, and consider the vectors $x_1 = (0, e_1)$, \dots, $x_n = (0, e_n)$. By Lemma~\ref{lemma:direct-sum}, $x_j^\bot = \cX \oplus e_j^\bot$ and $\tilde{x}^\bot = x^\bot \oplus \cY$, where $x^\bot = \{ \alpha z ; \; \alpha \in \FF, \, z \in I \}$. Hence
$$
\tilde{x}^\bot \cap x_1^\bot \cap \dots \cap x_n^\bot = x^\bot \oplus \{ 0 \} = \FF I \oplus \{ 0 \}.
$$
Again by Lemma~\ref{lemma:direct-sum}, for any nonzero $z \in \FF I$ we have $\tilde{z}^\bot = z^\bot \oplus \cY = \FF x \oplus \cY = {}^\bot z \oplus \cY = {}^\bot \tilde{z}$, so $\FF I \oplus \{ 0 \}$ is a BJ-set.

Without loss of generality we can consider only the identity permutation of $x_1, \dots, x_n$. Let
$$
\Omega=\{w\in\Gamma(\cX);\;\; \tilde{x}^\bot\cap w^\bot\cap x_2^\bot \cap \dots\cap x_{n}^\bot \hbox{ is a BJ-set}\}.
$$
Clearly, $x_1 \in \Omega$. Besides, for any $z \in I$ we have $\tilde{x}^\bot\cap \tilde{z}^\bot\cap x_2^\bot \cap \dots\cap x_{n}^\bot = \FF x_1$ which is also a BJ-set, so $\tilde{z} \in \Omega$. Thus
$$
\tilde{x}^\bot \cap x_2^\bot\cap\dots\cap x_n^\bot \cap\bigcap\limits_{w\in\Omega}w^\bot \subseteq \tilde{z}^\bot \cap (\FF I \oplus \{ 0 \}) =\{0\},
$$
and the condition~(ii) of Corollary~\ref{corollary:high-dimensional} is satisfied for a nonsmooth point $\tilde{x}$.
\end{ex}

\section{Related results and concluding remarks}\label{section5}

Let $\cX$ be a normed space over $\FF$ and $x\in\cX$. Let us recall a few facts about the norm's subdifferential set as defined in Section \ref{section3}.

\begin{enumerate}
    \item[(i)] The subdifferential set is also related to the right hand derivative of the norm as follows (see \cite[Theorem 1.2.9]{Grover}):
    \begin{align}
    \partial\|x\| &{}=\{f\in \cX^*; \; \text{Re} f(y)\leq D_+(x, y)\text{ for all } y\in \cX\},\label{eq4}\\
    D_+(x, y) &{}= \max\{\text{Re}f(y); \; f\in \partial \|x\|\}.\label{eq2}
    \end{align}
    \item[(ii)] The subdifferential set $\partial\|x\|$ is a convex subset of $\cX^*$ (because convex sum of two supporting functionals is a supporting functional).
    \item[(iii)] The point $x \in \cX$ is smooth if and only if $\partial\|x\|$ is a singleton set and only contains the functional $f$ defined as $f(y) =D_+(x, y)=D_-(x, y)$ (see \cite[Corollary 1.2.10]{Grover}).
\end{enumerate}

 Now we give a characterization for the condition $x^\bot\subseteq y^\bot$, which complements Lemma \ref{thm5}.

\begin{prop} Let $\cX$ be a normed space over $\FF$ and let $x,y\in \cX$. Then \begin{align*}x^\bot\subseteq y^\bot & \iff  \big( \exists \alpha\in\mathbb F\setminus\{0\} \text{ such that } \partial \| \alpha x\|\subseteq \partial\| y\|\big)\\&\iff \big( \exists \alpha\in\mathbb F\setminus\{0\} \text{ such that } D_+(\alpha x, z)\leq D_+(y,z)\;\; \forall z\in \cX\big).\end{align*}
\end{prop}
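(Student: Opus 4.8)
The plan is to route everything through the subdifferential description of outgoing neighborhoods: by \eqref{eq36} we have $x^\bot=\bigcup_{f\in\partial\|x\|}\Ker f$, while by \eqref{eq2} and \eqref{eq4} the right-hand derivative $D_+(x,\cdot)$ is the support function of $\partial\|x\|$. The two conditions on the right are the cheap part. Indeed, if $\partial\|\alpha x\|\subseteq\partial\|y\|$, then taking the maximum in \eqref{eq2} over the smaller set gives $D_+(\alpha x,z)\le D_+(y,z)$ for all $z$; conversely, if $D_+(\alpha x,z)\le D_+(y,z)$ for all $z$, then every $f\in\partial\|\alpha x\|$ obeys $\mathrm{Re}\,f(z)\le D_+(\alpha x,z)\le D_+(y,z)$, whence $f\in\partial\|y\|$ by \eqref{eq4}; so these two conditions are equivalent, with the same $\alpha$. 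The implication $\partial\|\alpha x\|\subseteq\partial\|y\|\Rightarrow x^\bot\subseteq y^\bot$ is then immediate from \eqref{eq36} together with the homogeneity identity $x^\bot=(\alpha x)^\bot$.

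The substance lies in the forward implication, and here I would argue pointwise in the subdifferential. Assume $x,y\neq 0$ (the cases $x=0$ or $y=0$ being trivial) and fix an arbitrary $f\in\partial\|x\|$. By \eqref{eq36} its kernel lies in $x^\bot\subseteq y^\bot$, which means $y\perp w$ for every $w\in\Ker f$; unwinding BJ-orthogonality gives $\|y+w\|\ge\|y\|$ for all $w\in\Ker f$, that is, $\mathrm{dist}(y,\Ker f)=\|y\|$. Choosing a norm-one functional on the quotient $\cX/\Ker f$ that attains its norm at $y+\Ker f$ and pulling it back yields, via Hahn--Banach, a functional $g\in\partial\|y\|$ with $\Ker g=\Ker f$. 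Because two norm-one functionals with equal kernels are unimodular multiples of one another and $g\in\partial\|y\|$ means $g(y)=\|y\|$, this simultaneously forces $|f(y)|=\|y\|$ and $\tfrac{\|y\|}{f(y)}\,f=g\in\partial\|y\|$, for every $f\in\partial\|x\|$.

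It then remains to make this rotation uniform over $\partial\|x\|$. The map $f\mapsto f(y)$ is real-affine and weak$^\ast$-continuous on the convex set $\partial\|x\|$, so its image is a convex subset of $\FF$; by the previous step this image is contained in the circle $\{\lambda:|\lambda|=\|y\|\}$. Since a convex subset of a circle can only be a single point, $f(y)\equiv c$ is constant with $|c|=\|y\|$. Setting $\nu=\|y\|/c$, which is unimodular, gives $\nu\,\partial\|x\|\subseteq\partial\|y\|$; and since a direct computation shows $\partial\|\alpha x\|=\tfrac{\bar\alpha}{|\alpha|}\,\partial\|x\|$, the choice $\alpha=\bar\nu$ yields $\partial\|\alpha x\|\subseteq\partial\|y\|$, completing the proof.

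The main obstacle is the extraction of a single supporting functional of $y$ annihilating the whole hyperplane $\Ker f$: the relation $y\perp w$ only provides a functional depending on the vector $w$, and upgrading this to one functional valid on all of $\Ker f$ is precisely the distance-duality step, where passing to the quotient lets Hahn--Banach do the work. A secondary point, where the real and complex cases part ways but both succeed, is the constancy of $f\mapsto f(y)$: over $\RR$ it follows from connectedness of $\partial\|x\|$ (the image lies in the two-point set $\{\pm\|y\|\}$), while over $\CC$ it follows because a convex subset of a genuine circle degenerates to a point.
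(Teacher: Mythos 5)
Your proof is correct and takes essentially the same route as the paper's: both dispose of the easy implications via \eqref{eq36}, \eqref{eq4} and \eqref{eq2}, and for the forward direction both show that $|f(y)|=\|y\|$ for every $f\in\partial\|x\|$ (you by the quotient/distance argument, the paper by invoking James's theorem) and then exploit convexity of $\partial\|x\|$ --- your ``convex subset of a circle is a point'' is the paper's midpoint computation $|\alpha+\beta|=2$ --- to make the unimodular rotation uniform. No gaps.
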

\begin{proof} We first prove the equivalence: $$x^\bot\subseteq y^\bot  \iff  \big( \exists \alpha\in\mathbb F\setminus\{0\} \text{ such that } \partial \|\alpha x\|\subseteq \partial\| y\|\big).$$ The converse implication follows from \eqref{eq36} and the fact that $y^\bot=(\alpha y)^\bot$.

Now, let $x^\bot\subseteq y^\bot$. Without loss of generality we may assume that $\|x\|=\|y\|=1$. Using \eqref{eq36}, we obtain $x^\bot =\bigcup\{\Ker f;\; f\in \partial\|x\|\}$. Let $f$ be a supporting functional at $x$. Then $x^\bot\subseteq y^\bot$ implies $\Ker f\subseteq y^\bot$. It follows from~\cite[Theorem~2.1]{James1} that $|f(y)|=1$, i.e., there exists unimodular $\alpha$ such that $f(y)=\alpha$.

Hence for $f,g\in\partial\|x\|$ there exist unimodular $\alpha, \beta$  such that $f(y)=\alpha$ and $g(y)=\beta$. Since $(f+g)/2$ also belongs to $\partial\|x\|$,  there exists unimodular  $\gamma$ such that
$$\tfrac{\alpha+\beta}{2}=\left(\tfrac{f+g}{2}\right)(y) =\gamma.$$
Thus $|\alpha+\beta|=2$, so it follows from $|\alpha|=|\beta|=1$ that $\alpha=\beta$, and hence $\partial\| \alpha x\|\subseteq \partial\| y\|$. The last equivalence follows directly from Eqs.~\eqref{eq4} and~\eqref{eq2}.
\end{proof}

\begin{cor} Let $\cX$ be a normed space and let $x,y\in \cX$. We have \begin{align*}x^\bot = y^\bot & \iff  \big( \exists \alpha\in\mathbb F\setminus\{0\} \text{ such that } \partial \|\alpha x\|= \partial\|y\|\big)\\&\iff \big( \exists \alpha\in\mathbb F\setminus\{0\} \text{ such that } D_+(\alpha x, z)= D_+(y,z)\;\;\forall z\in \cX\big).\end{align*}
\end{cor}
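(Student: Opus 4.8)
The plan is to read off the Corollary from the preceding Proposition by invoking it for each of the two inclusions $x^\bot\subseteq y^\bot$ and $y^\bot\subseteq x^\bot$, and then promoting the two resulting one-sided containments of subdifferentials into a single two-sided equality. The reverse implications are immediate: if $\partial\|\alpha x\|=\partial\|y\|$ for some $\alpha\neq0$, then \eqref{eq36} yields $(\alpha x)^\bot=y^\bot$, while homogeneity of Birkhoff--James orthogonality gives $(\alpha x)^\bot=x^\bot$, whence $x^\bot=y^\bot$; and the third characterization, in terms of $D_+$, transfers verbatim from the Proposition via \eqref{eq4} and \eqref{eq2}, since $\partial\|\alpha x\|=\partial\|y\|$ holds exactly when the two support functions $z\mapsto\max\{\mathrm{Re}\,f(z);\,f\in\partial\|\cdot\|\}=D_+(\cdot,z)$ coincide.

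For the forward implication I would first dispose of the trivial case and assume $x,y\neq0$, and after rescaling take $\|x\|=\|y\|=1$; this is harmless because $\partial\|\lambda z\|$ depends only on the unimodular part of $\lambda$. Applying the Proposition to $x^\bot\subseteq y^\bot$ supplies a scalar which, normalized to be unimodular, I call $\alpha$, with $\partial\|\alpha x\|\subseteq\partial\|y\|$; applying it to $y^\bot\subseteq x^\bot$ supplies a unimodular $\gamma$ with $\partial\|\gamma y\|\subseteq\partial\|x\|$. Here I would record the elementary scaling identity $\partial\|\lambda z\|=\overline{\lambda}\,\partial\|z\|$ for unimodular $\lambda$, which is immediate from the defining relations $f(\lambda z)=\|\lambda z\|$ and $\|f\|=1$. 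Rewriting both containments through this identity turns them into $\overline{\alpha}\,\partial\|x\|\subseteq\partial\|y\|$ and $\partial\|y\|\subseteq\gamma\,\partial\|x\|$.

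The one genuinely nontrivial point --- and the step I expect to be the main obstacle --- is to show that these two containments are in fact forced to be equalities, i.e.\ that $\gamma=\overline{\alpha}$. Concatenating them gives $v\,\partial\|x\|\subseteq\partial\|x\|$ with $v:=\overline{\alpha\gamma}$ unimodular, and the naive approach would try a compactness or measure argument on the convex set $\partial\|x\|$ to conclude $v=1$. I would avoid this altogether: every element $f$ of the nonempty set $\partial\|x\|$ satisfies $f(x)=\|x\|=1$, so from $vf\in\partial\|x\|$ one gets $v=v\,f(x)=(vf)(x)=\|x\|=1$. Thus $\alpha\gamma=1$, so $\gamma=\overline{\alpha}$, and the chain $\overline{\alpha}\,\partial\|x\|\subseteq\partial\|y\|\subseteq\gamma\,\partial\|x\|=\overline{\alpha}\,\partial\|x\|$ collapses to $\partial\|y\|=\overline{\alpha}\,\partial\|x\|=\partial\|\alpha x\|$, which is exactly what is needed. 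Everything outside this evaluation-at-$x$ observation is routine bookkeeping with the scaling identity and the Proposition.
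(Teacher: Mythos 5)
Your proof is correct. The paper states this corollary without proof, treating it as immediate from the preceding Proposition, and your derivation is exactly the intended one; the one genuinely nontrivial detail you isolate --- that the two one-sided applications of the Proposition a priori yield different scalars $\alpha$ and $\gamma$, which must be shown compatible --- is handled cleanly and correctly by your evaluation-at-$x$ argument (any $f\in\partial\|x\|$ satisfies $f(x)=\|x\|$, forcing the unimodular factor to be $1$), together with the scaling identity $\partial\|\lambda z\|=\overline{\lambda}\,\partial\|z\|$. This fills in a step the paper leaves implicit rather than deviating from it.
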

By Lemma~\ref{thm5}, two smooth vectors belong to the same face if and only if they have the same supporting functional, which, in finite-dimensional spaces,  coincides with their gradients. This yields the next observation.
\begin{cor}
Two smooth normalized vectors in a finite-dimensional normed space lie on the same face of the norm’s unit ball if and only if their gradients are equal.
\end{cor}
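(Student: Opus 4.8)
The plan is to reduce the geometric condition ``lying on a common face'' to the analytic condition ``having the same supporting functional'', and then to invoke the standard identification, valid in a finite-dimensional space, of the unique supporting functional of a smooth point with the gradient of the norm at that point. Throughout, let $x,y$ be smooth normalized vectors and write $f_x,f_y\in\cX^*$ for their respective (unique) supporting functionals; by item~(iii) recalled at the beginning of this section these coincide with the gradients of $\norm$ at $x$ and $y$. Hence it suffices to prove that $x$ and $y$ lie on a common face of the unit ball if and only if $f_x=f_y$.

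For the forward direction I would assume $x\neq y$ (the case $x=y$ being trivial) and that both lie on a common face $F$. Since $F$ is convex and contained in the unit sphere, the midpoint $z:=\tfrac{x+y}{2}$ again lies on $F$, so in particular $\|z\|=1$. Picking any supporting functional $f$ of $z$, the same averaging argument used in the proof of the implication (ii)~$\Rightarrow$~(i) of Lemma~\ref{thm5} shows that $f$ must support both endpoints: the scalars $f(x),f(y)$ lie in the closed unit disk, their average equals $f(z)=1$, and strict convexity of that disk forces $f(x)=f(y)=1$. Thus $f$ is a supporting functional of each of $x$ and $y$; by smoothness it is their \emph{only} one, whence $f_x=f=f_y$.

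For the reverse direction I would set $f:=f_x=f_y$ and observe that $f(x)=f(y)=1=\|f\|$, so both $x$ and $y$ belong to the exposed face $F=\{w\in\cX;\ \|w\|=1\text{ and }f(w)=1\}$ provided by Lemma~\ref{proposition:exposed-faces}, which is in particular a face. Hence $x$ and $y$ lie on a common face, completing the equivalence.

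I do not expect a genuine obstacle here, since the entire content is distilled from Lemma~\ref{thm5}; the corollary is essentially a restatement of its consequence for the two-element family $\{x,y\}$. The only step meriting a moment's care is the passage, in the forward direction, from a supporting functional of the interior point $z$ to a supporting functional of the endpoints in the \emph{complex} case, where one replaces the one-dimensional convexity of an interval by strict convexity of the closed unit disk in $\CC$. This is precisely the maneuver already carried out inside the proof of Lemma~\ref{thm5}, so it can simply be cited rather than reproduced.
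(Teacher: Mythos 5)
Your proposal is correct and follows essentially the same route as the paper: the paper derives this corollary directly from Lemma~\ref{thm5} (same face $\Leftrightarrow$ common supporting functional, via the midpoint/relative-interior averaging argument in one direction and the exposed face $f^{-1}(1)\cap\mathbf{B}_{\cX}$ in the other), combined with the identification of the unique supporting functional of a smooth point with the gradient. Your write-up merely makes explicit the details that the paper leaves to the citation of Lemma~\ref{thm5}.
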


We show next that there do exist nonsmooth and nonrotund real BJ-norms. The following example proves the existence of such a norm in $\RR^3$, but it can be easily generalized to $\RR^n$ with $n \geq 3$.
\begin{ex} \label{example:BJ-norm}
Consider
$$\|(x,y,z)\|:=\begin{cases}
 \sqrt{2} \sqrt {x^2+y^2+z^2}, & |z|\leq \sqrt {x^2+y^2}; \\
 \sqrt {x^2+y^2} +|z|, & \text{otherwise.} \end{cases}$$
To see that this is indeed a norm, we only need to prove the triangle inequality. This will be done  indirectly by studying the set of points $C$ where $\norm\le1$. Notice first that, by Cauchy-Schwarz-Bunyakowsky inequality, applied to $(\sqrt{x^2+y^2},|z|)$ and $(1,1)$, we immediately have
\begin{equation}\label{eq:smmothoutcone}
\sqrt{x^2+y^2}+|z|\le\sqrt{2}\sqrt{x^2+y^2+z^2},
\end{equation}
and the equality holds if and only if $|z|=\sqrt{x^2+y^2}$. Therefore, the set $C$ coincides with a cone $\{(x,y,z)\in\RR^3;\;\sqrt{x^2+y^2}+|z|\le1\}$, when $|z|\ge \sqrt{x^2+y^2}$, and a Euclidean ball of radius $1/\sqrt{2}$, otherwise. By \eqref{eq:smmothoutcone}, this ball is inscribed into the cone, so it touches it tangentially, and as such the function $\norm$ is smooth at nonzero points of intersection $|z|=\sqrt{x^2+y^2}$. Also, $C$ is invariant under rotation around the $z$-axis, more precisely, under the action of a matrix $U\oplus 1$ where $U$ is an orthogonal $(2 \times 2)$  matrix. Hence $C$ coincides with a rotational body obtained by rotating
$C\cap(\{0\}\times(-\infty,0]\times \RR)$ around the $z$-axis.  One can compute that the boundary points of this intersection lie on a convex  curve
$$
y(z) = \begin{cases}
    -\sqrt{\frac{1}{2}-z^2}, & \phantom{\frac{1}{2}<{}} |z| \leq \frac{1}{2}; \\
   |z|-1, & \frac{1}{2}<{} |z|\leq 1.
\end{cases}
$$
As such,
$$C=\left\{\left(y \cos\theta, y \sin\theta,z \right);\;\;0\le \theta\le 2\pi,\; y \leq 0, \; \|(0,y,z)\|\le 1\right\},$$
being a rotation of a convex planar region, is also convex. Moreover,  $C$ is clearly a compact, absorbing,  and balanced set whose boundary points  lie outside the open ball of radius $1/\sqrt{2}$. So, by~\cite[Theorem~1.35]{RUD}, its Minkowsky functional  is a norm whose closed unit ball coincides with $C$. Then Minkowsky functional must be equal to $\norm$, and the triangle inequality follows.

We proceed to show that $\norm$ is a BJ-norm. It suffices to verify that there exist no linearly independent normalized vectors $u,v\in\RR^3$ such that $u^\bot=v^\bot$ and ${}^\bot u={}^\bot v$. Assume otherwise.  By Lemma~\ref{thm5}, after replacing, if needed, $v$ with $-v$ we achieve that $u$  and $v$ belong to the same norm's face, i.e., the same face of norm's unit ball $C$.
Clearly, neither $u$ nor $v$ can coincide with the only two nonsmooth points $(0,0,\pm1)$ of $C$ because then one among $u^\bot,v^\bot$ would be a hyperplane, while  the other would be a union of several distinct hyperplanes, contradicting $u^\bot=v^\bot$.
Hence both $u,v$ are smooth and belong to the same face. Since $u^\bot=\Ker f_u$, where $f_u$ is a supporting functional at $u$ which can be identified with the norm's gradient at $u$, we see that $\partial\|u\|$ and $\partial\|v\|$ are parallel. Now, the norm's gradient at a smooth point $(x,y,z)$ is easily seen to be
$$\begin{cases}
\dfrac{\sqrt{2}(x,y,z)}{\sqrt{x^2+y^2+z^2}}, & | z| \leq \sqrt{x^2+y^2}; \\
   \left(\dfrac{x}{\sqrt{x^2+y
   ^2}},\dfrac{y}{\sqrt{x^2+y^2   }},\mathrm{sign}\, z\right), &  \text{otherwise}.
\end{cases}
$$
Therefore, under our assumption that $u, v$ are two distinct points which belong to the same face, the two gradients at $u=(u_1,u_2,u_3)$ and $v=(v_1,v_2,v_3)$ are parallel if and only if $|u_3| \ge \sqrt{u_1^2+u_2^2}$ and $|v_3| \ge \sqrt{v_1^2+v_2^2}$ and $(u_1,u_2)$ is parallel to $(v_1,v_2)$. By a suitable rotation around  the $z$-axis  and multiplication by $\pm1$ (which are both  isometries of our norm, hence preserve the in-coming and out-going neighbourhoods), we can achieve that normalized and smooth points $u,v$ satisfy $u=(0,u_2,1-u_2)$ and $v=(0,v_2,1-v_2)$ with $0< u_2,v_2\le 1/2$. The only smooth points in ${}^\bot u$ which lie on the hyperplane $\Pi=\{0\}\times\RR^2$ are the ones whose supporting functional annihilates $u$. Since supporting functional is given by the norm's gradient, we hence see that  the only such points are $(0,\lambda(1- 1/u_2),\lambda)$ where $\lambda\in\RR$. They clearly do not belong to ${}^\bot v\cap\Pi$, therefore, ${}^\bot u\neq{}^\bot v$. Thus we have shown that ${}^\bot u={}^\bot v$ and $u^\bot=v^\bot$ for some nonzero $u,v$  if and only if $\RR u=\RR v$.
\end{ex}

A norm on a finite-dimensional real vector space is called polyhedral if its closed unit ball is the intersection of finitely many closed halfspaces. This is equivalent to having only finitely many faces (see, e.g., \cite[Theorem 19.1]{Rockafellar}  for an even more general result).

\begin{prop}\label{prop:polyhedral}
Let $(\cX, \norm)$ be a real finite-dimensional normed space. Then the norm $\norm$ is polyhedral if and only if $\big|\{x^\bot;\; x\in \Gamma(\cX)\}\big|<\infty$.
\end{prop}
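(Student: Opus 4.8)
The plan is to prove both implications by relating the outgoing neighborhoods $x^\bot$ to the facial structure of the norm's unit ball and of its dual. Write $B$ for the closed unit ball of $\cX$ and $B^\ast$ for the closed unit ball of $\cX^\ast$, and recall from the paragraph preceding the proposition that polyhedrality of a norm is equivalent to its unit ball having only finitely many faces (see \cite[Theorem~19.1]{Rockafellar}).

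For the forward implication, suppose $\norm$ is polyhedral, so that $B$ is a polytope with finitely many faces. After normalizing, every nonzero $x$ lies in the relative interior of a unique face of $B$, and the set of supporting functionals $\partial\|x\|$ depends only on that face, being the normal cone of $B$ along it. Hence $\partial\|x\|$, and therefore $x^\bot=\bigcup_{f\in\partial\|x\|}\Ker f$ (by \eqref{eq36}), takes only finitely many values as $x$ ranges over $\Gamma(\cX)$, which gives $\big|\{x^\bot;\; x\in\Gamma(\cX)\}\big|<\infty$.

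For the converse I would pass to the dual. Assume $\big|\{x^\bot;\; x\in\Gamma(\cX)\}\big|<\infty$, so in particular the subset $\{x^\bot;\; x\in\Gamma(\cX),\ x\text{ smooth}\}$ is finite. By Lemma~\ref{exposed-smooth} this subset coincides with $\{\Ker f;\; f\in\Exp(\cX^\ast)\}$, with kernels understood projectively. Every exposed point of $B^\ast$ lies on the dual unit sphere, so two of them share a kernel only when they are antipodal; hence the map $f\mapsto\Ker f$ on $\Exp(\cX^\ast)$ has fibers of size at most two, and finiteness of its image forces $\Exp(\cX^\ast)$ itself to be finite. Then $\mathrm{conv}\,\Exp(\cX^\ast)$ is a polytope, hence a closed subset of $B^\ast$; but, as recalled after Lemma~\ref{exposed-smooth}, the convex span of the exposed points is dense in $B^\ast$ (\cite[Theorem~18.7]{Rockafellar}). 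A closed dense subset must be all of $B^\ast$, so $B^\ast=\mathrm{conv}\,\Exp(\cX^\ast)$ is a polytope, i.e.\ $\cX^\ast$ is polyhedral. Since the polar of a polytope with the origin in its interior is again a polytope, $B=(B^\ast)^\circ$ is a polytope and $\norm$ is polyhedral.

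The crux is this converse direction. The decisive observation is that, by Lemma~\ref{exposed-smooth}, the smooth outgoing neighborhoods are precisely the projective kernels of the exposed points of $B^\ast$, so their finiteness bounds the number of exposed points; combined with the density of the convex hull of the exposed points in $B^\ast$, this pins down $B^\ast$ as a polytope. The only subtlety to handle carefully is that passing from a functional to its kernel collapses each antipodal pair $\{f,-f\}$, which changes counts merely by a factor of two and so does not affect finiteness.
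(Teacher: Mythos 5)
Your proof is correct. The forward implication is essentially the paper's own argument: normalize, use that the sphere is the disjoint union of relative interiors of faces and that $x^\bot$ is constant on each such relative interior (the paper invokes Remark~\ref{substitute:lemma2.10}(iii) rather than the normal-cone formulation, but this is the same fact). The converse, however, is genuinely different. The paper stays in the primal space: if $y^\bot=x_i^\bot$ then Lemma~\ref{thm5} (applied with $z=y$) puts $y$ and $x_i$ on a common face, so finitely many distinct outgoing neighborhoods bound the number of faces directly. You instead pass to the dual: you use Lemma~\ref{exposed-smooth} and \eqref{eq36} to identify the smooth outgoing neighborhoods with the projective kernels of $\Exp(\cX^\ast)$, note that in the real case the fiber over a kernel is an antipodal pair, conclude $\Exp(\cX^\ast)$ is finite, combine closedness of the convex hull of a finite set with the density of $\mathrm{conv}\,\Exp(\cX^\ast)$ in the dual ball to see that the dual ball is a polytope, and dualize back via the bipolar theorem. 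Your route is slightly longer but buys something the paper's does not: it uses only the finiteness of $\{x^\bot;\; x\in\Gamma(\cX),\ x\text{ smooth}\}$, so it actually establishes the formally stronger equivalence of polyhedrality with finiteness of the \emph{smooth} outgoing neighborhoods, in the spirit of Theorem~\ref{main3}. The only cosmetic imprecision is calling $\partial\|x\|$ "the normal cone" rather than its normalized cross-section; this does not affect the argument.
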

\begin{proof} If the norm $\norm$ is polyhedral, then it has finitely many faces. Using Remark~\ref{substitute:lemma2.10}, we obtain $x^\bot=y^\bot$ for all $x,y$ in the relative interior of any particular face. By~\cite[Theorem 18.2]{Rockafellar}, the norm's unit sphere is a disjoint union of relative interiors of its faces, and hence $\big|\{x^\bot;\; x\in \Gamma(\cX)\}\big|<\infty$.

Conversely, assume that $|\{x^\bot;\; x\in \Gamma(\cX)\}|=m$. Let $x_1, \dots, x_m$ be points of $\Gamma(\cX)$ such that $x_i^\bot\neq x_j^\bot$ for all $i\neq j$. Since for any $y\in\Gamma(\cX)$ we have $y^\bot=x_i^\bot$ for some $1\leq i\leq m$, Lemma \ref{thm5} implies that $y$ and $x_i$ belong to the same face. Therefore, the number of faces of the unit ball is less than equal to $m$. Hence the norm $\norm$ is a polyhedral norm.\end{proof}

\bigskip

Finally, let us conclude with some observations.

\begin{itemize}
    \item[(A)] Recall that a complexification of a real normed space $(\cX, \norm)$ is a complex vector space $\cX_{\mathbb C} = \mathbb C\otimes_{\mathbb R}\cX$ equipped with a complex norm $\norm_{\mathbb C}$ that agrees with $\norm$ on the real subspace $1\otimes_{\mathbb R}\cX$. Our next example shows that there might exist different complexifications $(\cY_1, \norm_1)$ and $(\cY_2, \norm_2)$ of $(\cX, \norm)$ such that projective orthodigraphs $\Gamma(\cY_1)$ and $\Gamma(\cY_2)$ are not  isomorphic. For example, consider two complexifications of the real Euclidean space $(\mathbb R^n, \norm_2)$:   the complex Hilbert space $(\mathbb C^n, \norm_2)$ and the Taylor minimal complexification $(\mathbb C^n, \norm_{\mathfrak T})$. These two complexifications of  $(\mathbb R^n, \norm_2)$ are not isometric because their isometry groups are not isomorphic (see \cite[Example 6.1]{Li}). By James' result, BJ-orthogonality in   $(\mathbb C^n, \norm_{\mathfrak T})$ is not symmetric, so  projective orthodigraphs $\Gamma(\mathbb C^n, \norm_2)$ and $\Gamma(\mathbb C^n, \norm_{\mathfrak T})$ are not  isomorphic.

\item[(B)]\label{remark:exposed}
Corollary \ref{cor:maximalfaceclassify} holds even if we replace the word `maximal face' by `maximal exposed face'. This follows from the fact that, in a finite-dimensional normed space, a maximal face is always exposed. To see this, assume that $F$ is the norm's maximal face. Let $A=\Span_{\mathrm{aff}} F$ be its (real) affine span. If $F$ is zero-dimensional, then let $z \in F$ be its unique point. Otherwise, let $z$ be an arbitrary point from the (nonempty) relative interior of $F$. Consider a linear subspace $\cV = A-z$, so that $A = \cV + z$. Since either $\cV = 0$ or $z$ belongs the relative interior of $F$, for any $v \in \cV$ we have $z + \lambda v \in F$ for $\lambda \in \mathbb{R}$ with small enough $|\lambda|$, see~\cite[Corollary~6.4.1]{Rockafellar}. But $\| x \| = 1$ for all $x \in F$, so for any $v \in \cV$ it holds that $D_{\pm}(z;v) = 0$, and thus, by~\cite[Theorem~3.2]{James1}, $z \perp v$. It follows from~\cite[Theorem~2.1]{James1} that there exists an $\RR$-linear functional $f$ with $\| f \| = 1$ such that $f(z) = 1$ and $\cV \subseteq \Ker f$. Then $f(F)\subseteq f(A)=f(\cV+z)=\{1\}$, so $F\subseteq f^{-1}(1)\cap \boldsymbol{B}_{\cX}$, where $\boldsymbol{B}_{\cX}$ is the norm's closed unit ball of $\cX$. Since this intersection is also a face, and~$F$, being a maximal face, cannot be properly contained in another face, it follows that $F= f^{-1}(1)\cap \boldsymbol{B}_{\cX}$. Thus $F$ is an exposed face.

\item[(C)] In particular, each maximal face in a  complex Radon  plane~$\mathcal{R}$  takes the form $f^{-1}(1)\cap\boldsymbol{B}_{\mathcal{R}}$ for  some normalized $\CC$-linear functional $f$, so its  dimension is either $0$ or $2$. Thus, condition (ii) of Lemma~\ref{lem:nnnsmoothRadonplaneCC} is also equivalent to:
(a$'$) The boundary points of each two-dimensional face in ${\mathcal R}$ are smooth,~and (b$'$) the set $\{x^\bot; \;\;x\in\Gamma_0({\mathcal R})\}$ has continuum many points.
\item[(D)] Note that in the proof of Lemma \ref{smooth-rotund} we showed that for a smooth norm the condition ${}^\bot x= {}^\bot y$ implies $\FF x = \FF y$. With the out-going neighborhoods we can say more: in \cite[Lemma~2.6]{AGKRZ23} it was proved that a norm on $\cX$ is strictly convex if and only if, given $x, y \in \cX \setminus \{ 0 \}$, the condition $x^\bot = y^\bot$ implies $\FF x = \FF y$.

\item[(E)]One way to interpret the existence of an isomorphism in item (i) of Lemmas~\ref{lem:nnnsmoothRadonplaneRR} and~\ref{lem:nnnsmoothRadonplaneCC} (and the existence of appropriate nonsmooth Radon planes as guaranteed by Examples~\ref{example:nonsmooth-Radon-plane} and~\ref{example:complex-Radon-plane}) is that  orthodigraph $\Gamma_0$ cannot detect nonsmooth norms on two-dimensional spaces. In this regard, its projective counterpart, orthodigraph $\Gamma$   is again more restrictive: by Lemma~\ref{lemma:two-dimensional-projective}, there can be no isomorphism  between $\Gamma({\mathcal R})$ for a (real or complex)  nonsmooth Radon plane and $\Gamma(\FF^2,\norm_2)$ for a (real or complex) two-dimensional Hilbert space.

\item[(F)] Let ${\mathcal R}$ be a smooth real Radon plane which is not isometrically isomorphic to a two-dimensional Hilbert space $\ell_2^2$, and, for a nonempty index set $I$, let  $c_0(I)$  denote the Banach space of  all systems $(a_n)_{n\in I}$ such
that $\{n \in I;\; |a_n| \ge \varepsilon \}$  is finite for each $\varepsilon > 0$. In a recent paper, Tanaka~\cite[Theorem 3.13]{Tanaka22a} showed that Banach spaces ${\mathcal R}\oplus_{\infty} c_0(I)$ and $\ell_2^2\oplus_{\infty} c_0(I)$ are not isometrically isomorphic, but still there does exist a homogeneous bijection $\Phi\colon {\mathcal R}\oplus_{\infty} c_0(I)\to \ell^2_2\oplus_{\infty}  c_0(I)$, which preservers BJ-orthogonality in both directions. Because it is homogeneous, it hence induces a graph isomorphism between projective orthodigraphs  $\Gamma({\mathcal R}\oplus_{\infty }c_0(I))$ and $\Gamma(\ell_2^2\oplus_{\infty} c_0(I))$. This shows that, in nonsmooth normed spaces,  orthodigraphs cannot completely classify the norms up to isometry.

\item[(G)] Shortly after the submission of this paper we were informed that Saikat Roy and Ryotaro Tanaka just recently obtained yet another classification, parallel to our  Lemma~\ref{lem:nnnsmoothRadonplaneRR},  of real Radon planes with $\Gamma_0({\mathcal R})\simeq \Gamma_0(\RR^2,\norm_2)$  (see~\cite[Corollary 2.14 and Theorems~3.10 and~3.13]{Roy-Tanaka}). Moreover, \cite[Example~3.16]{Roy-Tanaka} supplements our Example \ref{example:absolute-Radon-plane} of a nonsmooth and nonrotund real Radon plane $\mathcal R$ which satisfies this condition. However, to the best of the authors' knowledge, our Lemma \ref{lem:nnnsmoothRadonplaneCC} and the accompanying Example~\ref{example:complex-Radon-plane}, for the complex Radon planes are completely new.

\item[(H)] For a complex normed space $\cX$, if we define two norm's faces $F_1$ and~$F_2$ to be equivalent if there exists a unimodular scalar $\mu$ such that $F_1=\mu F_2$, then the result for complex normed space corresponding to Proposition \ref{prop:polyhedral} states as follows. There exists finitely many nonequivalent norm's faces if and only if $\big|\{x^\bot;\; x\in\Gamma(\cX)\}\big|< \infty.$ We also remark that this characterization is  valid even in nonprojective case. More precisely, there exists finitely many nonequivalent norm's faces in a finite-dimensional normed space $(\cX, \norm)$ if and only if $\big|\{x^\bot;\; x\in \Gamma_0(\cX)\}\big|<\infty$. This follows directly because from \eqref{eq15}, we see that there is bijective correspondence between $\{x^\bot;\; x\in \Gamma(\cX)\}$ and $\{x^\bot;\; x\in \Gamma_0(\cX)\setminus\{0\}\}$ given by $[x]^\bot\longleftrightarrow x^\bot.$
\end{itemize}

\end{document}